\newcommand{\ds}{\displaystyle}
\theoremstyle{plain}
\def \[{\begin{equation}}
\def \]{\end{equation}}
\newtheorem{thm}{Theorem}[section]
\newtheorem{lem}[thm]{Lemma}
\newtheorem{prop}[thm]{Proposition}
\newtheorem{rem}[thm]{Remark}
\newif \ifLastSection \LastSectionfalse
\numberwithin{equation}{section}
\newcommand{\R}{{\mathbb R}}
\begin{document}
\title{   Existence of positive ground state solutions for the nonlinear Kirchhoff type equations in $\R^{3}$
\thanks {a: Partially  supported  by  NSFC No: 11071095 and  Hubei Key Laboratory of Mathematical
Sciences.  }
\thanks{b:Corresponding author: Gongbao Li, School of Mathematics and
Statistics, Central China  Normal University,  Wuhan, 430079, China.
E-mail address: ligb@mail.ccnu.edu.cn}}
\author{Gongbao Li ,\,\,\,\,\,Hongyu Ye \\ \small {School of Mathematics and Statistics,  Central China Normal
University,}
\\
\small{ Wuhan 430079, P. R. China}}
\date{}
\maketitle

\begin{abstract}
In this paper, we study the following nonlinear problem of Kirchhoff
type with pure power nonlinearities:
\begin{equation}\label{0.1}
\left\{%
\begin{array}{ll}
    -\left(a+b\ds\int_{\R^3}|D u|^2\right)\Delta u+V(x)u=|u|^{p-1}u, & \hbox{$x\in \R^3$}, \\
    u\in H^1(\R^3),~~~~u>0, & \hbox{$x\in \R^3$},\\
\end{array}%
\right.\end{equation}
 where $a,$ $b>0$ are constants, $2<p<5$ and
$V:\R^3\rightarrow\R$. Under certain assumptions on $V$, we prove
that \eqref{0.1} has a positive ground state solution by using a
monotonicity trick and a new version of global compactness lemma.

Our main results can be viewed as a partial extension of a recent
result of He and Zou in \cite{hz} concerning the existence of
positive solutions to the nonlinear Kirchhoff problem
$$\left\{%
\begin{array}{ll}
    -\left(\varepsilon^2a+\varepsilon b\ds\int_{\R^3}|D u|^2\right)\Delta u+V(x)u=f(u), & \hbox{$x\in \R^3$}, \\
    u\in H^1(\R^3),~~~~u>0, & \hbox{$x\in \R^3$},\\
\end{array}%
\right.$$where $\varepsilon>0$ is a parameter, $V(x)$ is a positive
continuous potential and $f(u)\thicksim |u|^{p-1}u$ with $3<p<5$ and
satisfies the Ambrosetti-Rabinowitz type condition. Our main results
extend also the arguments used in \cite{ap,zz}, which deal with
Schr\"{o}dinger-Poisson system with pure power nonlinearities, to
the Kirchhoff type problem.
\end{abstract}

{\bf Keywords:} \,\, Kirchhoff equation; Ground state solutions;
Poho\u{z}aev type identity; Variational methods.
\section{ Introduction and main result}
In this paper, we consider the existence of positive ground state
solutions to the following Kirchhoff type problem with pure power
nonlinearities:
\begin{equation}\label{1.1}
\left\{%
\begin{array}{ll}
    -\left(a+b\ds\int_{\R^3}|D u|^2\right)\Delta u+V(x)u=|u|^{p-1}u, & \hbox{$x\in \R^3$}, \\
    u\in H^1(\R^3),~~~~u>0, & \hbox{$x\in \R^3$},\\
\end{array}%
\right.\end{equation} where $a,~b>0$ are constants and $2<p<5$. We
assume that $V(x)$ verifies the following hypotheses:

$(V_1)$~~~$V(x)\in C(\R^3,\R)$ is weakly differentiable and
satisfies $(DV(x),x)\in L^\infty(\R^3)\cup L^{\frac{3}{2}}(\R^3)$
and
$$V(x)-(DV(x),x)\geq0~~a.e.~x\in\R^3,$$
where $(\cdot,\cdot)$ is the usual inner product in $\R^3$;

$(V_2)$~~~for almost every $x\in\R^3$,
$V(x)\leq\liminf\limits_{|y|\rightarrow+\infty}V(y)\triangleq
V_\infty<+\infty$ and the inequality is strict in a subset of
positive Lebesgue measure;

$(V_3)$~~~there exists a $\overline{C}>0$ such that
$$\overline{C}=\inf_{u\in H^1(\R^3)\backslash\{0\}}\frac{\int_{\R^3}|Du|^2+V(x)|u|^2}{\int_{\R^3}|u|^2}>0.$$

In recent years, the following elliptic problem
\begin{equation}\label{1.2}\left\{%
\begin{array}{ll}
    -\left(a+b\ds\int_{\R^N}|D u|^2\right)\Delta u+V(x)u=f(x,u), & \hbox{$x\in \R^N$}, \\
    u\in H^1(\R^N) & \hbox{$ $}\\
\end{array}%
\right.\end{equation} has been studied extensively by many
researchers, where $V:\R^N\rightarrow\R$, $f\in C(\R^N\times\R,\R)$,
$N=1,2,3$ and $a,b>0$ are constants. \eqref{1.2} is a nonlocal
problem as the appearance of the term  $\int_{\R^N}|D u|^2$ implies
that \eqref{1.2} is not a pointwise identity. This causes some
mathematical difficulties which make the study of \eqref{1.2}
particularly interesting. Problem \eqref{1.2} arises in an
interesting physical context. Indeed, if we set $V(x)=0$ and replace
$\R^N$ by a bounded domain $\Omega\subset\R^N$ in \eqref{1.2}, then
we get the following Kirchhoff Dirichlet problem
$$\left\{%
\begin{array}{ll}
    -\left(a+b\ds\int_{\Omega}|D u|^2\right)\Delta u=f(x,u), & \hbox{$x\in\Omega$}, \\
    u=0, & \hbox{$x\in \partial\Omega$},\\
\end{array}%
\right.$$ which is related to the stationary analogue of the
equation
\begin{equation}\label{1.3}
\rho\frac{\partial^2u}{\partial
t^2}-\left(\frac{P_0}{h}+\frac{E}{2L}\ds\int_0^L\left|\frac{\partial
u}{\partial x}\right|^2dx\right)\frac{\partial^2 u}{\partial x^2}=0
\end{equation}presented by Kirchhoff in \cite{k}. The readers can learn some early research of Kirchhoff equations from \cite{b,p}.
In \cite{l}, J. L. Lions introduced an abstract functional analysis
framework to the following equation
\begin{equation}\label{1.4}
u_{tt}-\left(a+b\ds\int_{\Omega}|D u|^2\right)\Delta
u=f(x,u).\end{equation}After that, \eqref{1.4} received much
attention, see \cite{ac,af,ap1,cd,ds} and the references therein.

Before we review some results about \eqref{1.2}, we give several
definitions.

Let $(X,\parallel\cdot\parallel)$ be a Banach space with its dual
space $(X^*,\parallel\cdot\parallel_*)$, $I\in C^1(X,\R)$ and
$c\in\R$. We say a sequence $\{x_n\}$ in $X$ a Palais-Smale sequence
at level $c$ ($(PS)_{c}$ sequence in short) if $I(x_{n})\rightarrow
c$ and $\parallel I'(x_{n})\parallel_*\rightarrow 0$ as
$n\rightarrow\infty.$ We say
that $I$ satisfies $(PS)_c$ condition if for any
$(PS)_c$ sequence $\{x_n\}$ in $X$, there exists a
subsequence $\{x_{n_k}\}$ such that $x_{n_k}\rightarrow x_0$ in $X$
for some $x_0\in X$.

Throughout the paper, we use standard notations. For simplicity, we
write $\int_{\Omega} h$ and $\int_{\partial\Omega} hdS$ to mean the
Lebesgue integral of $h(x)$ over a domain $\Omega\subset\R^3$ and
over its boundary $\partial\Omega$ respectively. $L^{p}\triangleq
L^{p}(\R^{3})~(1\leq p<+\infty)$ is the usual Lebesgue space with
the standard norm $|u|_{p}.$ We use "$\rightarrow$" and
"$\rightharpoonup$" to denote the strong and weak convergence in the
related function space respectively.
$B_r(x)\triangleq\{y\in\R^3|~|x-y|<r\}$. $C$ will denote a positive
constant unless specified.

There have been many works about the existence of nontrivial
solutions to \eqref{1.2} by using variational methods, see e.g.
\cite{hz,jw,lh,wt,w}. Clearly weak solutions to \eqref{1.2}
correspond to critical points of the energy functional
$$\Psi(u)=\frac{1}{2}\ds\int_{\R^N}(a|Du|^2+V(x)|u|^2)+\frac{b}{4}\left(\ds\int_{\R^N}|Du|^2\right)^2-\ds\int_{\R^N}F(x,u)$$
defined on $E\triangleq\{u\in
H^1(\R^N)|~\int_{\R^N}V(x)|u|^2<\infty\}$, where
$F(x,u)=\int_{0}^{u}f(x,s)ds$. A typical way to deal with
\eqref{1.2} is to use the mountain-pass theorem. For this purpose,
one usually assumes that $f(x,u)$ is subcritical, superlinear at the
origin and either 4-superlinear at infinity in the sense that
$$\lim\limits_{|u|\rightarrow+\infty}\frac{F(x,u)}{u^4}=+\infty~~\hbox{uniformly~in~}x\in\R^N$$
or satisfies the Ambrosetti-Rabinowitz type condition ($(AR)$ in
short):
$$(AR)~~~~~~~~~~~~\exists~\mu>4~~\hbox{such~that}~~~~0<\mu~F(x,u)\leq f(x,u)u~~~~\hbox{for~all~}u\neq0.~~~~~~~~~~~~~~~~~~~~$$
Under the above mentioned conditions, one easily sees that $\Psi$
possesses a mountain-pass geometry around $0\in H^1(\R^N)$ and by
the mountain-pass theorem, one can get a $(PS)$ sequence of $\Psi$.
Moreover, the $(PS)$ sequence is bounded if
$$(F)~~~~~~~~~~~~~~~~~~~~~~~~~~~~~~~~~~~~~~~~~~4F(x,u)\leq f(x,u)u~~\hbox{for~all}~u\in\R~~~~~~~~~~~~~~~~~~~~~~~~~~~~~~~~~~~~~~~~~~~~$$
holds. Therefore, one can show that $\Psi$ satisfies the $(PS)$
condition and \eqref{1.2} has at least one nontrivial solution
provided some further conditions on $f(x,u)$ and $V(x)$ are assumed
to guarantee the compactness of the $(PS)$ sequence.

In \cite{jw}, Jin and Wu proved that \eqref{1.2} has infinitely many
radial solutions by using a fountain theorem when $N=2,3$,
$V(x)\equiv1$ and $f(x,u)$ is subcritical, superlinear at the origin
and 4-superlinear at infinity and invariant with respect to $x\in
\R^N$ under the actions of group of orthogonal transformations,
together with some conditions which are weaker than $(AR)$.

In \cite{w}, Wu obtained the existence of nontrivial solutions to
\eqref{1.2} by proving that $(PS)$ condition holds when $f(x,u)$ is
4-superlinear at infinity and satisfies $(F)$ and other conditions,
the potential $V(x)\in C(\R^N,\R)$ satisfies
$$(V_4)~~~~~~\inf\limits_{x\in\R^N} V(x)\geq a_1>0~~\hbox{and~for~each~}M>0,~\hbox{meas}\{x\in\R^N:~V(x)\leq
M\}<+\infty~~~~~~$$to ensure the compactness of embeddings of
$E\triangleq\{u\in H^1(\R^3)|~\int_{\R^3}V(x)|u|^2<\infty\}\hookrightarrow L^q(\R^N)$, where $2\leq q<2^*=\frac{2N}{N-2}$,
$a_1$ is a constant and $\hbox{meas}(\cdot)$ denotes the Lebesgue
measure in $\R^N$.

In \cite{hz}, He and Zou studied \eqref{1.2} under the conditions:
$N=3$, a positive continuous potential $V(x)$ satisfies
$$(V_5)~~~~~~~~~~~~~~~~~~~~~~~~~~~V_\infty=\liminf\limits_{|x|\rightarrow\infty}V(x)>V_0=\inf\limits_{x\in\R^3}V(x)>0,~~~~~~~~~~~~~~~~~~~~~~~~~$$
$f(x,u)=f(u)\in C^1(\R_+,\R_+)$ satisfies $(AR)$,
$\lim\limits_{|u|\rightarrow0}\frac{f(u)}{|u|^3}=0,$
$\lim\limits_{|u|\rightarrow\infty}\frac{f(u)}{|u|^q}=0$ for some
$3<q<5$ and $\frac{f(u)}{u^3}$ is strictly increasing for $u>0$. They proved
that \eqref{1.2} has a positive ground state solution by using the
Nehari manifold.

Under the same condition $(V_5)$ on $V(x)$, Wang et al. in \cite{wt}
also proved the multiplicity of positive ground state solutions for
\eqref{1.2} by using the Nehari manifold when $N=3$ and
$f(x,u)=\lambda f(u)+|u|^4u$, which exhibits a critical growth,
where $f(u)=o(u^3)$, $f(u)u\geq0$, $\frac{f(u)}{u^3}$ is increasing
for $u>0$ and $|f(u)|\leq C(1+|u|^q)$ for some $q\in(3,5)$.

In \cite{lh}, Liu and He proved that \eqref{1.2} has infinitely many
solutions by using a variant version of fountain theorem when $N=3$,
$V(x)\in L^{\infty}_{loc}(\R^3)$ satisfies $(V_4)$ and
 $f(x,u)$ satisfies either $$(AR)~~\hbox{and}~~
\liminf\limits_{|u|\rightarrow\infty}\frac{F(x,u)}{|u|^\alpha}>0~\hbox{uniformly~in}~x\in\R^3~~\hbox{for~
some}~4<\alpha<6$$or
$$\lim\limits_{|u|\rightarrow\infty}\frac{F(x,u)}{|u|^4}=+\infty\hbox{~~and~~}\widetilde{F}(x,u)~\hbox{is~nondecreasing~for~all}~u>0,$$
where $\widetilde{F}(x,u)=\frac{1}{4}f(x,u)u-F(x,u)$.

Recently, in \cite{lls}, Li et al. studied the existence of a
positive solution for the following Kirchhoff problem
\begin{equation}\label{1.5}
\left(a+\varepsilon\ds\int_{\R^N}(|D u|^2+b|u|^2)\right)[-\Delta
u+bu]=f(u),~~\hbox{in}~\R^N,
\end{equation}
where $N\geq3$, $a,$ $b$ are positive constants, $\varepsilon\geq0$
is a parameter and the nonlinearity $f(u)$ satisfies the following
conditions:

$(H_1)$~~$f\in C(\R_+,\R_+)$ and $|f(u)|\leq C(|u|+|u|^{q-1})$ for
all $u\in\R_+$ and some $q\in (2,2^*)$, where $2^*=\frac{2N}{N-2}$;

$(H_2)$~~$\lim\limits_{u\rightarrow0}\frac{f(u)}{u}=0$;

$(H_3)$~~$\lim\limits_{u\rightarrow\infty}\frac{f(u)}{u}=\infty$.

\noindent By using a truncation argument combined with a
monotonicity trick introduced by Jeanjean \cite{j} (see also Struwe
\cite{s}), they showed that there exists $\varepsilon_0>0$ such that
for any $\varepsilon\in[0,\varepsilon_0)$, \eqref{1.5} has at least
one positive radial symmetric solution. However, their method could
be applied neither to the case that $\varepsilon$ is an arbitrary
positive constant nor to get a ground state solution in $H^1(\R^3)$.

Problem \eqref{1.1} is an important typical case for \eqref{1.2}
when $N=3$ and $f(x,u)=|u|^{p-1}u$ with $2<p<5$. For $2<p<5$, $f(x,u)$
may not be 4-superlinear at infinity, let alone $(AR)$. To the best
of our knowledge, the existence of nontrivial solutions was proved
only for $3<p<5$ (see e.g. \cite{hz}) and there is no existence
result for nontrivial solutions to \eqref{1.1} when $2<p\leq3$. The
difficulty is to get a bounded $(PS)$ sequence and to prove that the $(PS)$
sequence weakly converges to a critical point of the corresponding functional in $H^1(\R^3)$.

Motivated by the works described above, particularly, by the results
in \cite{hz,lls}, we try to get the existence of positive ground
state solutions to \eqref{1.1}. To state our main result, suppose
that $V(x)$ satisfies $(V_1)-(V_3)$ and $a>0$ is fixed, we introduce
an equivalent norm on $H^1(\R^3)$: the norm of $u\in H^1(\R^3)$ is
defined as
$$\|u\|=\left(\ds\int_{\R^3}a|Du|^2+V(x)|u|^2\right)^{\frac{1}{2}},$$which
is induced by the corresponding inner product on $H^1(\R^3)$. Weak
solutions to \eqref{1.1} correspond to critical points of the
following functional
\begin{equation}\label{1.10}
I_V(u)=\frac{1}{2}\ds\int_{\R^3}(a|Du|^2+V(x)|u|^2)+\frac{b}{4}\left(\ds\int_{\R^3}|Du|^2\right)^2-\frac{1}{p+1}\ds\int_{\R^3}|u|^{p+1}.
\end{equation}
We mention that although $I_V(u)$ is well defined in $H^1(\R^3)$ for
$1<p<5$, there exists a nontrivial solution to \eqref{1.1} only if
$2<p<5$ (see Theorem 1.1 below). We say a nontrivial weak solution
$u$ to \eqref{1.1} a ground state solution if $I_V(u)\leq I_V(w)$
for any nontrivial solution $w$ to
\eqref{1.1}.\\

Our main result is as follows:

\begin{thm}\label{thm1.1}\ \ If $V(x)$ satisfies $(V_1)-(V_3)$, then problem \eqref{1.1} has a positive
ground state solution for any $2<p<5$.
\end{thm}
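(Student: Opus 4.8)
The plan is to combine Jeanjean's monotonicity trick with a Poho\v{z}aev-type identity and a global compactness lemma adapted to the nonlocal term. First I would embed \eqref{1.1} in the family
$$I_{V,\lambda}(u)=\frac12\|u\|^2+\frac{b}{4}\Big(\int_{\R^3}|Du|^2\Big)^2-\frac{\lambda}{p+1}\int_{\R^3}|u|^{p+1},\qquad\lambda\in[\delta,1],$$
for a fixed $\delta\in(0,1)$. Using $(V_3)$ (so that $\|\cdot\|$ is equivalent to the $H^1$-norm) together with the Sobolev embedding and $p+1>2$ gives a uniform mountain-pass valley at the origin. Since possibly $p+1\le 4$, the descending part of the geometry cannot be produced along rays but along the dilation path $t\mapsto v(\cdot/t)$: here $(V_2)$, which in particular forces $V\le V_\infty<\infty$, yields $I_{V,\lambda}(v(\cdot/t))\le \tfrac{at}{2}|Dv|_2^2+\tfrac{bt^2}{4}|Dv|_2^4+t^3\big(\tfrac{V_\infty}{2}|v|_2^2-\tfrac{\lambda}{p+1}|v|_{p+1}^{p+1}\big)\to-\infty$ as $t\to\infty$ for a suitably rescaled fixed $v$, which also fixes a common endpoint $e$ for all $\lambda$. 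The minimax values $c_\lambda$ are then non-increasing in $\lambda$ and satisfy $0<\alpha\le c_\lambda\le c_\delta$.

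By Jeanjean's theorem, for almost every $\lambda\in[\delta,1]$ the functional $I_{V,\lambda}$ possesses a \emph{bounded} $(PS)_{c_\lambda}$ sequence $\{u_n\}$. To convert it into a nontrivial critical point I would prove a global compactness (splitting) lemma for \eqref{1.1}: any bounded $(PS)_c$ sequence of $I_{V,\lambda}$ decomposes, up to a subsequence, as $u_n=u^0+\sum_{j=1}^k u^j(\cdot-y_n^j)+o(1)$ in $H^1(\R^3)$, where $u^0$ is a critical point of $I_{V,\lambda}$, each $u^j\ne0$ solves the limit problem $-(a+b\ell)\Delta v+V_\infty v=\lambda|v|^{p-1}v$ with $\ell=\sum_{i\ge0}\int|Du^i|^2$ the total limiting Dirichlet mass, and with a matching energy identity in which the Kirchhoff term couples all the profiles through $\ell$ (it is this coupling, absent in the local case, that makes the statement ``new''). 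A test-function comparison based on $(V_2)$ gives $c_\lambda$ strictly below the least energy of the limit problem, so the decomposition is trivial, $u_n\to u_\lambda$ strongly, and $u_\lambda\ne0$ is a critical point of $I_{V,\lambda}$ with $I_{V,\lambda}(u_\lambda)=c_\lambda$.

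Next I would choose $\lambda_n\to1^-$ and set $u_n:=u_{\lambda_n}$. The crucial boundedness of $\{u_n\}$ in $H^1(\R^3)$ — the genuine difficulty when $2<p\le3$, where neither $(AR)$ nor $4$-superlinearity is available — follows by combining the Nehari identity $\langle I_{V,\lambda_n}'(u_n),u_n\rangle=0$ with the Poho\v{z}aev identity for the $\lambda_n$-equation, which is legitimate after the standard elliptic-regularity bootstrap since the Kirchhoff coefficient is a positive constant for fixed $n$. Hypothesis $(V_1)$ enters exactly here: $V-(DV,x)\ge0$ gives $\int(DV\cdot x)u_n^2\le\int V u_n^2$, and combining this with the two identities yields $\tfrac{5-p}{p+1}\big(a|Du_n|_2^2+b|Du_n|_2^4\big)\le\tfrac{4p-2}{p+1}\int V u_n^2$; elementary linear algebra with $2<p<5$ then bounds $a\int|Du_n|^2+\int V u_n^2=\|u_n\|^2$ by $C(1+c_{\lambda_n})$, and the Nehari identity also gives $\|u_n\|\ge\rho>0$. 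Passing to a weak limit $u_n\rightharpoonup u$ and rerunning the compactness analysis (now $c_{\lambda_n}\to c_1$, still below the limit-problem level by $(V_2)$) gives $u_n\to u$ strongly, $u\ne0$, $I_V'(u)=0$ and $I_V(u)=c_1$; replacing $u$ by $|u|$ and applying the strong maximum principle to the resulting equation makes $u>0$.

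Finally, for the ground-state property, for an arbitrary nontrivial solution $w$ the dilation path $t\mapsto w(\cdot/t)$, joining $0$ to a point of negative energy, has by the Poho\v{z}aev identity and $(V_1)$ its unique maximum at $t=1$ with value $I_V(w)$, whence $c_1\le I_V(w)$; since $I_V(u)=c_1$, $u$ is a positive ground state. The main obstacle is the global compactness lemma of the second and third paragraphs: controlling the interaction between the nonlocal factor $b\int|Du_n|^2$ and the profile decomposition, and thereby showing that any loss of compactness would push the energy above $c_\lambda$ — the correct bookkeeping of the limiting Dirichlet mass $\ell$ in the equation at infinity is the delicate point, with the rigorous justification of the Poho\v{z}aev identity (regularity and decay of the $u_n$) a close second.
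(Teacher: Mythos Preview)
Your overall strategy—Jeanjean's monotonicity trick, a Kirchhoff-adapted global compactness lemma, Poho\v{z}aev to bound $\{u_{\lambda_n}\}$—is exactly the paper's. But two points are not yet right.

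\textbf{The splitting lemma and the energy comparison.} You state that the weak limit $u^0$ is a critical point of $I_{V,\lambda}$. It is not, in general: from $u_n\rightharpoonup u^0$ you only get $\int|Du_n|^2\to A^2\ge\int|Du^0|^2$, so $u^0$ solves the \emph{frozen} equation $-(a+bA^2)\Delta u+V(x)u=\lambda|u|^{p-1}u$, i.e.\ it is a critical point of
\[
J_{V,\lambda}(u)=\tfrac{a+bA^2}{2}\int|Du|^2+\tfrac12\int V|u|^2-\tfrac{\lambda}{p+1}\int|u|^{p+1},
\]
and likewise each bubble $w^k$ is a critical point of the corresponding $J_\lambda^\infty$. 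The correct energy identity is $c_\lambda+\tfrac{b}{4}A^4=J_{V,\lambda}(u^0)+\sum_k J_\lambda^\infty(w^k)$ with $A^2=|Du^0|_2^2+\sum_k|Dw^k|_2^2$. The step you elide—``so the decomposition is trivial''—is precisely the hard one: the bubbles are \emph{not} solutions of the Kirchhoff limit problem, so $J_\lambda^\infty(w^k)\ge m_\lambda^\infty$ is false as written. The paper closes this by showing, via the Nehari–Poho\v{z}aev combination for the frozen problem and a rescaling $t_kw^k(t_k^{-1}\cdot)\in\mathcal{M}_\lambda^\infty$, that $J_\lambda^\infty(w^k)\ge m_\lambda^\infty+\tfrac{b}{4}A^2|Dw^k|_2^2$ and that $J_{V,\lambda}(u^0)\ge\tfrac{b}{4}A^2|Du^0|_2^2$ (here $(V_1)$ is used); summing cancels the extra $\tfrac{b}{4}A^4$ and yields $c_\lambda\ge m_\lambda^\infty$, contradicting $c_\lambda<m_\lambda^\infty$. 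Without this bookkeeping the comparison with the limit level does not go through.

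\textbf{The ground-state step.} Your argument—that for any solution $w$ the path $t\mapsto w(\cdot/t)$ has its \emph{unique} maximum at $t=1$—is not the paper's, and is not obviously correct when $V$ is non-constant: Poho\v{z}aev only gives $\frac{d}{dt}I_V(w(\cdot/t))\big|_{t=1}=0$, while uniqueness of the critical point would require controlling $t\mapsto t^3\!\int V(ty)|w(y)|^2\,dy$, which $(V_1)$ alone does not obviously do (and showing the path reaches negative energy needs $V_\infty|w|_2^2<\tfrac{2}{p+1}|w|_{p+1}^{p+1}$, which is not automatic). The paper instead sets $m=\inf\{I_V(u):u\ne0,\ I_V'(u)=0\}$, takes a minimizing sequence of \emph{solutions}, proves it bounded by the same Poho\v{z}aev/$(V_1)$ argument you used for $\{u_{\lambda_n}\}$, and reruns the compactness lemma at level $m\le c_1<m_\lambda^\infty$ to obtain a minimizer.
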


\begin{rem}\label{rem1.2}\ \  These hypotheses $(V_1)-(V_3)$ on $V(x)$
above were introduced to study the Schr\"{o}dinger-Poisson system in
\cite{zz} and have physical meaning. There are indeed functions
which satisfy $(V_1)-(V_3)$. For example,
$V(x)=V_1-\frac{1}{|x|+1}$, where $V_1>1$ is a positive constant.
\end{rem}

Theorem 1.1 can be viewed as a partial extension of a main result in
\cite{hz} and extends the main result in \cite{zz} to the Kirchhoff
equation.

Now we give our main idea for the proof of Theorem 1.1. Since $(AR)$
or 4-superlinearity does not hold, the functional $I_V$ does not
always possess a mountain-pass geometry. Moreover, since $2<p<5$, it
is difficult to get the boundedness of any $(PS)$ sequence even if a
$(PS)$ sequence has been obtained. To overcome this difficulty,
inspired by \cite{lls,zz}, we use an indirect approach developed by
Jeanjean. We apply the following proposition due to Jeanjean
\cite{j}.
\begin{prop}\label{pro1.3}(\cite{j}, Theorem 1.1)\ \
Let $(X,\|\cdot\|)$ be a Banach space and $T\subset\R_+$ be an
interval. Consider a family of $C^1$ functionals on $X$ of the form
$$\Phi_\lambda(u)=A(u)-\lambda B(u),~~\forall~\lambda\in T,$$
with $B(u)\geq0$ and either $A(u)\rightarrow+\infty$ or
$B(u)\rightarrow+\infty$ as $\|u\|\rightarrow+\infty$. Assume that
there are two points $v_1$, $v_2\in X$ such that
$$c_\lambda=\inf_{\gamma\in
\Gamma}\max_{t\in[0,1]}\Phi_\lambda(\gamma(t))>\max\{\Phi_\lambda(v_1),~\Phi_\lambda(v_2)\},~~\forall~\lambda\in
T,$$ where
$$\Gamma=\{\gamma\in C([0,1],X)|~\gamma(0)=v_1,~\gamma(1)=v_2\}.$$
Then, for almost every $\lambda\in T$, there is a bounded
$(PS)_{c_\lambda}$ sequence in $X$.
\end{prop}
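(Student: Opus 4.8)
The plan is to follow Jeanjean's monotonicity trick, whose whole point is to trade the possible unboundedness of Palais--Smale sequences for the almost-everywhere differentiability of a monotone function. The first step is to record the monotonicity of $\lambda\mapsto c_\lambda$. Since $B(u)\ge 0$, for $\lambda_1<\lambda_2$ in $T$ one has $\Phi_{\lambda_2}(u)\le\Phi_{\lambda_1}(u)$ for every $u\in X$; taking $\max_{t\in[0,1]}$ along a path and then the infimum over $\Gamma$ gives $c_{\lambda_2}\le c_{\lambda_1}$. Thus $c_\lambda$ is non-increasing on $T$, hence by Lebesgue's theorem it is differentiable at every $\lambda$ outside a null set $N\subset T$. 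Fix $\lambda\in T\setminus N$; the goal is to produce a bounded $(PS)_{c_\lambda}$ sequence for $\Phi_\lambda$, which yields the conclusion for almost every $\lambda$.

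The second step exploits the finiteness of $c_\lambda'$. Choose $\lambda_n\nearrow\lambda$ and, for each $n$, a path $\gamma_n\in\Gamma$ that is near-optimal for $\Phi_{\lambda_n}$, say $\max_{t}\Phi_{\lambda_n}(\gamma_n(t))\le c_{\lambda_n}+(\lambda-\lambda_n)$. Let $u_n=\gamma_n(t_n)$ be a point where this maximum is attained. Writing $\Phi_{\lambda_n}(u_n)=\Phi_\lambda(u_n)+(\lambda-\lambda_n)B(u_n)\ge c_\lambda+(\lambda-\lambda_n)B(u_n)$ and combining with near-optimality yields
$$B(u_n)\le \frac{c_{\lambda_n}-c_\lambda}{\lambda-\lambda_n}+1,$$
whose right-hand side is bounded because the difference quotient converges to $-c_\lambda'\ge 0$. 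Hence $B(u_n)$ is bounded; since $\Phi_{\lambda_n}(u_n)$ lies between $c_{\lambda_n}$ and $c_{\lambda_n}+(\lambda-\lambda_n)$ with $c_{\lambda_n}\to c_\lambda$, it too is bounded, so $A(u_n)=\Phi_{\lambda_n}(u_n)+\lambda_n B(u_n)$ is bounded. By the coercivity dichotomy (one of $A,B$ tends to $+\infty$ as $\|u\|\to\infty$), the boundedness of both $A(u_n)$ and $B(u_n)$ forces $\|u_n\|\le M_\lambda$ for some constant $M_\lambda$. A short computation then gives $\Phi_\lambda(u_n)\to c_\lambda$ and $c_\lambda\le\max_{t}\Phi_\lambda(\gamma_n(t))\le\Phi_{\lambda_n}(u_n)\to c_\lambda$, so the $\gamma_n$ are also near-optimal paths for $\Phi_\lambda$ whose maximizers all sit in the fixed ball $\{\|u\|\le M_\lambda\}$.

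The third step, where the real work lies, upgrades this bounded family of almost-maximizers to an honest bounded $(PS)_{c_\lambda}$ sequence for $\Phi_\lambda$. I would argue by contradiction through a localized deformation lemma: if no bounded $(PS)_{c_\lambda}$ sequence existed, there would be constants $\alpha,\delta>0$ and a radius $R>M_\lambda$ with $\|\Phi_\lambda'(u)\|\ge\alpha$ whenever $|\Phi_\lambda(u)-c_\lambda|\le 2\delta$ and $\|u\|\le R$; shrinking $\delta$ I may also assume $\Phi_\lambda(v_i)<c_\lambda-2\delta$, which is possible because the mountain-pass hypothesis gives $\Phi_\lambda(v_i)<c_\lambda$. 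Using a pseudo-gradient field cut off to vanish outside the level band and outside $\{\|u\|\le R\}$, one builds a deformation $\eta$ fixing $v_1,v_2$ and pushing the part of each near-optimal path that lies in the band and in the ball below level $c_\lambda-\delta$. Applying $\eta$ to a $\gamma_n$ with $\max_{t}\Phi_\lambda(\gamma_n(t))\le c_\lambda+\delta$ would give a path in $\Gamma$ with maximum value $<c_\lambda$, contradicting the definition of $c_\lambda$.

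The main obstacle is precisely this localization. The full paths $\gamma_n$ may leave every ball, so one must guarantee that the cut-off pseudo-gradient still lowers the relevant high part of the path while the low, possibly far-away, part remains below $c_\lambda$ automatically; this is exactly what the uniform bound $\|u_n\|\le M_\lambda<R$ on the maximizers buys us, since the near-optimal level $c_\lambda$ is attained only inside $\{\|u\|\le M_\lambda\}$. Once the contradiction is reached, the resulting $(PS)_{c_\lambda}$ sequence is confined to $\{\|u\|\le R\}$ and is therefore bounded, which completes the argument for every $\lambda\in T\setminus N$, that is, for almost every $\lambda\in T$.
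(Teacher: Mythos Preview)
The paper does not give its own proof of Proposition~1.3; it is quoted verbatim from Jeanjean~\cite{j} (Theorem~1.1) and used as a black box. Your sketch is precisely Jeanjean's monotonicity trick: monotonicity of $\lambda\mapsto c_\lambda$ gives a.e.\ differentiability, the finiteness of the derivative forces uniform bounds on $A$ and $B$ along the high part of near-optimal paths, and a localized deformation then extracts a bounded $(PS)_{c_\lambda}$ sequence. So there is nothing to compare against in this paper, and your approach coincides with the one in the cited source.

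One small slip worth flagging: in Step~2 you take $u_n=\gamma_n(t_n)$ to be the maximizer of $\Phi_{\lambda_n}$ along $\gamma_n$ and then assert $\Phi_\lambda(u_n)\ge c_\lambda$. That inequality is not automatic for the $\Phi_{\lambda_n}$-maximizer. The clean choice is to let $u_n$ be any maximizer of $\Phi_\lambda$ along $\gamma_n$; then $\Phi_\lambda(u_n)\ge c_\lambda$ by definition of $c_\lambda$, while $\Phi_{\lambda_n}(u_n)\le\max_t\Phi_{\lambda_n}(\gamma_n(t))\le c_{\lambda_n}+(\lambda-\lambda_n)$, and your bound on $B(u_n)$ follows. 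More generally, Jeanjean bounds $B$ uniformly on the whole set $\{t:\Phi_\lambda(\gamma_n(t))\ge c_\lambda-(\lambda-\lambda_n)\}$, which is what you actually need for the deformation step, since the cut-off pseudo-gradient must push down every point of the path in the high band, not just a single maximizer. With that adjustment your outline is correct.
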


Let $T=[\delta,1],$ where $\delta\in(0,1)$ is a positive constant.
We consider a family of $C^1$ functionals defined by
$$I_{V,\lambda}(u)=\frac{1}{2}\ds\int_{\R^3}(a|Du|^2+V(x)|u|^2)+\frac{b}{4}\left(\ds\int_{\R^3}|Du|^2\right)^2-\frac{\lambda}{p+1}\ds\int_{\R^3}|u|^{p+1},~~\forall~\lambda\in [\delta,1].$$
By $(V_2)$ and Proposition 1.3, for a.e. $\lambda\in [\delta,1]$, there
exists a bounded $(PS)_{c_\lambda}$ sequence in $H^1(\R^3)$, denoted by $\{u_n\}$, where
$c_\lambda$ is given below (see Lemma 3.1). We can not easily see
that $I^\prime_{V,\lambda}$ is weakly sequentially continuous in
$H^1(\R^3)$ by direct calculations due to the existence of the
nonlocal term $\int_{\R^3}|Du|^2$. Indeed, in general, we do not know $\int_{\R^3}|Du_n|^2\rightarrow\int_{\R^3}|Du|^2$ from $u_n\rightharpoonup u$ in $H^1(\R^3)$. For problem \eqref{1.2}, this difficulty was overcome in \cite{lls}\cite{jw} when $V(x)\equiv \hbox{const}$ by using the radially symmetric Sobolev space $H^1_r(\R^3)=\{u\in H^1(\R^3)|~u(|x|)=u(x)\}$, where the embeddings $H^1_r(\R^3)\hookrightarrow L^q(\R^3)$ ($2<q<6$) are compact. If $V(x)$ satisfies $(V_4)$, this difficulty was dealt with in \cite{lh}\cite{w} by using the weighted Sobolev space $E=\{u\in H^1(\R^3)|~\int_{\R^3}V(x)|u|^2<\infty\}$ to guarantee that $(PS)$ condition holds. In \cite{hz} and \cite{wt}, $V(x)$ satisfies $(V_5)$, then the method used in \cite{lls}\cite{jw}\cite{lh}\cite{w} can not work. However, for the mountain-pass level $c$, it can be proved that each $(PS)_c$ sequence weakly converges to a critical point of the corresponding functional in $H^1(\R^3)$. Their argument strongly depends on the fact that $c=\inf\Psi(N)$, where $N=\{u\in H^1(\R^3)\backslash\{0\}|~\langle\Psi^\prime(u),u\rangle=0\}$ and $\frac{f(u)}{u^3}$ is strictly increasing for $u>0$. As we deal
with problem \eqref{1.1} in $H^1(\R^3)$, the Sobolev embeddings
$H^1(\R^3)\hookrightarrow L^q(\R^3),$ $q\in[2,2^*)$ are not compact. The nonlinearity $|u|^{p-1}u$ with $p\in(2,5)$ implies that the monotonicity of $\frac{|u|^{p-1}u}{u^3}$ does not always hold. So the arguments mentioned above can not be applied here to get a critical point of $I_{V,\lambda}$ from the bounded $(PS)_{c_\lambda}$ sequence $\{u_n\}$. To overcome this difficulty, although we can not directly prove that the weak limit $u\in H^1(\R^3)$ of $\{u_n\}$ is a critical point of $I_{V,\lambda}$, but we do easily see that $u$ is a critical point of the following functional
$$J_{V,\lambda}(u)=\frac{a+bA^2}{2}\ds\int_{\R^3}|Du|^2+\frac{1}{2}\ds\int_{\R^3}V(x)|u|^2-\frac{\lambda}{p+1}\ds\int_{\R^3}|u|^{p+1}$$
and $\{u_n\}$ is a $(PS)_{c_\lambda+\frac{bA^4}{4}}$ sequence for $J_{V,\lambda}$,
where $A^2=\lim\limits_{n\rightarrow\infty}\int_{\R^3}|Du_n|^2.$
By $(V_2)$, we try to establish a version of global compactness lemma (see Lemma 3.4 below) related to the functional $J_{V,\lambda}$ and its limited functional $$J^\infty_\lambda(u)=\frac{a+bA^2}{2}\ds\int_{\R^3}|Du|^2+\frac{1}{2}\ds\int_{\R^3}V_\infty|u|^2-\frac{\lambda}{p+1}\ds\int_{\R^3}|u|^{p+1}.$$
To apply the global compactness
lemma, first of all, we need to consider the existence of ground
state solutions of the associated "limit problem" of \eqref{1.1},
which is given as
\begin{equation}\label{1.13}
\left\{%
\begin{array}{ll}
    -\left(a+b\ds\int_{\R^3}|D u|^2\right)\Delta u+V_\infty u=\lambda|u|^{p-1}u, & \hbox{$x\in \R^3$}, \\
    u\in H^1(\R^3),~~~~u>0, & \hbox{$x\in \R^3$}\\
\end{array}%
\right.\end{equation} and their corresponding least energy of the
associated limited functional
$$
I^\infty_{\lambda}(u)=\frac{1}{2}\ds\int_{\R^3}(a|Du|^2+V_\infty|u|^2)+\frac{b}{4}\left(\ds\int_{\R^3}|Du|^2\right)^2-\frac{\lambda}{p+1}\ds\int_{\R^3}|u|^{p+1}.
$$

We obtain the following result:

\begin{thm}\label{thm1.4}
\eqref{1.13} has a positive ground state solution in $H^1(\R^3)$ for
all $2<p<5$.
\end{thm}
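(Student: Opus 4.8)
Since the potential in \eqref{1.13} is the constant $V_\infty>0$, the problem is autonomous, and the plan is to produce the ground state by minimizing $I^{\infty}_{\la}$ over a Poho\u{z}aev--type manifold, using Schwarz symmetrization to recover the compactness lost to translations. First I would record the Poho\u{z}aev identity for \eqref{1.13}: testing the equation against $x\cdot Du$ (the coefficient $a+b|Du|_2^2$ being a fixed number once $u$ is fixed), every solution $u$ satisfies
$$P^{\infty}_{\la}(u):=\frac a2|Du|_2^2+\frac b2|Du|_2^4+\frac{3V_\infty}{2}|u|_2^2-\frac{3\la}{p+1}|u|_{p+1}^{p+1}=0.$$
Set $\mathcal P_{\la}=\{u\in H^1(\R^3)\setminus\{0\}:P^{\infty}_{\la}(u)=0\}$ and $m_{\la}=\inf_{\mathcal P_{\la}}I^{\infty}_{\la}$. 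Two algebraic facts drive the argument: on $\mathcal P_{\la}$ one has $I^{\infty}_{\la}(u)=\frac a3|Du|_2^2+\frac b{12}|Du|_2^4\ge 0$; and every nontrivial solution of \eqref{1.13} lies in $\mathcal P_{\la}$, so once a minimizer of $I^{\infty}_{\la}$ over $\mathcal P_{\la}$ is shown to be a genuine critical point it is automatically a positive ground state.

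Next I would analyze the dilation fibre $g_u(t):=I^{\infty}_{\la}(u(\cdot/t))$, $t>0$, for fixed $u\neq 0$. One checks $P^{\infty}_{\la}(u(\cdot/t))=t\,g_u'(t)$, where $g_u'(t)$ is a quadratic in $t$ with $g_u'(0^{+})=\frac a2|Du|_2^2>0$ and leading coefficient $\gamma(u):=\frac{3V_\infty}{2}|u|_2^2-\frac{3\la}{p+1}|u|_{p+1}^{p+1}$; whenever $\gamma(u)<0$ (which holds automatically on $\mathcal P_\la$, and can always be arranged by replacing $u$ with $tu$ for $t$ large since $p+1>2$, so that $\mathcal P_\la\neq\emptyset$) the fibre $g_u$ has a unique positive critical point $t(u)$, a strict maximum, with $u(\cdot/t(u))\in\mathcal P_\la$. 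The restriction $2<p<5$ is precisely what puts the powers $t,t^2,t^3$ occurring here in the right order; this plays the role of the Nehari/$(AR)$ machinery, which is genuinely unavailable because for $2<p\le 3$ the nonlinearity is not $4$-superlinear and $I^{\infty}_{\la}$ fails to have the usual mountain--pass geometry along rays. Combining $P^{\infty}_{\la}(u)=0$ with the Gagliardo--Nirenberg inequality I would then derive, uniformly on $\mathcal P_{\la}$, the bound $|u|_2\le C|Du|_2^{3}$, and feeding this back, $|Du|_2\ge c_0>0$; by the energy formula on $\mathcal P_{\la}$ this yields $m_{\la}>0$ and the boundedness in $H^1(\R^3)$ of every minimizing sequence.

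To produce a minimizer I would take a minimizing sequence $\{u_n\}\subset\mathcal P_{\la}$ and replace each $u_n$ by the projection onto $\mathcal P_\la$ (via the dilation above) of the Schwarz symmetrization of $|u_n|$: since symmetrization does not increase $|Du|_2$ and preserves $|u|_2$ and $|u|_{p+1}$, this gives a radial, nonnegative minimizing sequence with no larger energy. It is bounded, so up to a subsequence $u_n\rightharpoonup u$ in $H^1_r(\R^3)$ and, by the compact embedding $H^1_r(\R^3)\hookrightarrow L^{p+1}(\R^3)$ together with the lower bound on $|u_n|_{p+1}$ read off from $P^{\infty}_{\la}(u_n)=0$, one has $u_n\to u$ in $L^{p+1}$ with $u\neq 0$; hence $\gamma(u)<0$ and the projection $\bar u:=u(\cdot/t(u))\in\mathcal P_\la$ is well defined. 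For the fixed number $t(u)$, $u_n(\cdot/t(u))\rightharpoonup\bar u$ in $H^1$ and $\to\bar u$ in $L^{p+1}$, so weak lower semicontinuity of the quadratic and quartic parts gives $I^{\infty}_{\la}(\bar u)\le\liminf_n I^{\infty}_{\la}(u_n(\cdot/t(u)))$; and since $u_n\in\mathcal P_\la$ the fibre $g_{u_n}$ is maximal at $t=1$, whence $I^{\infty}_{\la}(u_n(\cdot/t(u)))\le I^{\infty}_{\la}(u_n)$. Therefore $I^{\infty}_{\la}(\bar u)\le m_\la$, and since $\bar u\in\mathcal P_\la$ also $I^{\infty}_{\la}(\bar u)\ge m_\la$; so $\bar u$ is a nonnegative radial minimizer.

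It remains to show $\bar u$ solves \eqref{1.13}. Since $(P^{\infty}_{\la})'(\bar u)\neq 0$ on $\mathcal P_\la$ (if it vanished, its own Poho\u{z}aev identity combined with $P^{\infty}_{\la}(\bar u)=0$ would force $a|D\bar u|_2^2+\frac b2|D\bar u|_2^4=0$, impossible), Lagrange multipliers give $(I^{\infty}_{\la})'(\bar u)=\theta\,(P^{\infty}_{\la})'(\bar u)$ for some $\theta\in\R$, i.e. $\bar u$ weakly solves
$$-[(1-\theta)a+(1-2\theta)b|D\bar u|_2^2]\Delta\bar u+(1-3\theta)V_\infty\bar u=(1-3\theta)\la|\bar u|^{p-1}\bar u ,$$
and writing the Poho\u{z}aev identity for this equation (again the coefficient of $\Delta\bar u$ is frozen) and subtracting $(1-3\theta)$ times $P^{\infty}_{\la}(\bar u)=0$ leaves $\theta(2a|D\bar u|_2^2+b|D\bar u|_2^4)=0$, so $\theta=0$ and $\bar u$ solves \eqref{1.13}. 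Finally, $\bar u\ge 0$, $\bar u\not\equiv 0$, and the equation now reads $-(a+b|D\bar u|_2^2)\Delta\bar u+V_\infty\bar u=\la\bar u^{p}$ with frozen, smooth coefficients, so elliptic regularity and the strong maximum principle give $\bar u>0$; being a solution with $I^{\infty}_{\la}(\bar u)=m_\la$ while all solutions lie in $\mathcal P_\la$, it is a positive ground state. The main obstacle is the interplay, for $2<p\le 3$, between the nonlocal term and the failure of $4$-superlinearity: it rules out the standard Nehari--manifold route and makes it delicate both to obtain a bounded minimizing (or $(PS)$) sequence and, afterwards, to show that the constrained minimizer is a free critical point --- both handled here through the Poho\u{z}aev identity and the scaling $u\mapsto u(\cdot/t)$.
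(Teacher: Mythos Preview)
Your argument is correct, but it differs from the paper's in several substantial ways. The paper works with the manifold $\mathcal{M}=\{u\neq 0:\ G(u)=0\}$ where $G(u)=\langle I'(u),u\rangle+P(u)$, obtained from the scaling $u_t(x)=t\,u(t^{-1}x)$; you instead use the pure Poho\u{z}aev manifold $\mathcal{P}_\la=\{u\neq 0:\ P^\infty_\la(u)=0\}$ together with the dilation $u(\cdot/t)$. Both are natural constraints, but your choice gives the cleaner identity $I^\infty_\la|_{\mathcal P_\la}=\frac a3|Du|_2^2+\frac b{12}|Du|_2^4$, from which boundedness of minimizing sequences and $m_\la>0$ follow almost immediately via Gagliardo--Nirenberg, while the paper needs the algebra of Lemma~2.6, Step~2. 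For compactness the paper stays in $H^1(\R^3)$ and runs the full concentration--compactness trichotomy (Lemma~2.11), whereas you exploit the autonomy of \eqref{1.13} by first Schwarz--symmetrizing and projecting back to $\mathcal P_\la$, and then using the compact embedding $H^1_r(\R^3)\hookrightarrow L^{p+1}(\R^3)$; this is shorter and more elementary. Your Lagrange--multiplier step (subtracting a multiple of $P^\infty_\la(\bar u)=0$ from the Poho\u{z}aev identity of the constrained Euler--Lagrange equation) is also simpler than the $4\times 4$ linear system solved in Lemma~2.6, Step~4. One small remark: your parenthetical that ``$2<p<5$ is precisely what puts the powers $t,t^2,t^3$ in the right order'' is inaccurate as stated, since those powers do not depend on $p$; the only place $p$ enters your fibering is in ensuring $\gamma(tu)<0$ for large $t$ (needing merely $p>1$), and $p<5$ enters through Sobolev embedding and Gagliardo--Nirenberg. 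On the other hand, the paper's choice of $\mathcal{M}$ and the identification of $m^\infty_\la$ with a mountain-pass level (Lemma~2.8) are reused in the proof of Theorem~1.1 (Lemmas~3.3 and~3.5), so their extra work is not wasted; if you continue with your manifold you would need an analogous $c_\la<m_\la$ comparison via the dilation path $t\mapsto u^\infty_\la(\cdot/t)$.
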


\begin{rem}\label{rem1.5}\ \
Theorem 1.4 in this paper is different from the main result in
\cite{lls}, since the equation \eqref{1.13} is different from the
equation \eqref{1.5} and we prove the existence of a positive ground
state solution in $H^1(\R^3)$.
\end{rem}

Therefore, by using Theorem 1.4 and applying the global compactness lemma and conditions $(V_1)$ $(V_2)$, we can prove that $(PS)_{c_\lambda}$ condition holds. During the proof, more careful analysis is needed to consider the relationship between $J_\lambda^\infty(w)$ and the least energy of $I_\lambda^\infty$, where $w$ is any critical point of $J_\lambda^\infty$ obtained in the global compactness lemma. Finally, choosing a
sequence $\{\lambda_n\}\subset [\delta,1]$ with $\lambda_n\rightarrow1$,
there exists a sequence of nontrivial weak solutions
$\{u_{\lambda_n}\}\subset H^1(\R^3)$. We can prove that
$\{u_{\lambda_n}\}$ is a bounded $(PS)_{c_1}$ sequence for
$I_{V}=I_{V,1}$ by using the
Poho\u{z}aev identity and $(V_1)$, which yields Theorem 1.1.\\

As for problem \eqref{1.13}, $I^\infty_\lambda$ does not always
satisfy $(PS)$ condition and it is difficult to get a ground state
solution even if nontrivial weak critical points for
$I^\infty_\lambda$ have been obtained since $2<p<5$. For simplicity,
we may assume that $V_\infty=\lambda\equiv1$ in \eqref{1.13}, i.e.
\begin{equation}\label{1.16}
\left\{%
\begin{array}{ll}
    -\left(a+b\ds\int_{\R^3}|D u|^2\right)\Delta u+u=|u|^{p-1}u, & \hbox{$x\in \R^3$}, \\
    u\in H^1(\R^3),~~~~u>0, & \hbox{$x\in \R^3$}\\
\end{array}%
\right.\end{equation} and denote the corresponding functional $I(u)$
instead of $I^\infty_\lambda(u)$, i.e.
$$
I(u)=\frac{1}{2}\ds\int_{\R^3}(a|Du|^2+|u|^2)+\frac{b}{4}\left(\ds\int_{\R^3}|Du|^2\right)^2-\frac{1}{p+1}\ds\int_{\R^3}|u|^{p+1}.
$$
To obtain Theorem 1.4, we need to prove that problem \eqref{1.16}
has a positive ground state solution in $H^1(\R^3)$ for all $2<p<5$.

In the last decade, the following nonlinear Schr\"{o}dinger-Poisson
system
\begin{equation}\label{1.7}\left\{%
\begin{array}{ll}
    -\Delta u+u+ \lambda\phi u=|u|^{p-1}u, & \hbox{$x\in \R^3$}, \\
    -\Delta \phi=u^2, & \hbox{$x\in \R^3$},\\
\end{array}%
\right.
\end{equation} where $\lambda>0$ is a parameter and $1<p<5$, has been extensively studied, see e.g. \cite{aru,ap,c,dm,ru}.
$(u,\phi)\in H^1(\R^3)\times D^{1,2}(\R^3)$ is a weak solution to
\eqref{1.7} if $u$ is a critical point of the functional
\begin{equation}\label{1.8}
E(u)=\frac{1}{2}\ds\int_{\R^3}(|Du|^2+u^2)+\frac{1}{4}\lambda\ds\int_{\R^3}\phi_uu^2
-\frac{1}{p+1}\ds\int_{\R^3}|u|^{p+1},
\end{equation}where
$\phi_u$ is the unique solution of the second equation in
\eqref{1.7}. Whether there is a nontrivial solution to \eqref{1.7}
or not depends on the range of the parameter $\lambda$ and $p$. For
$p\geq3$, it is easy to prove that the energy functional $E(u)$ satisfies the (PS) condition and one can use the mountain-pass theorem to get the existence of a nontrivial
solution to \eqref{1.7} (see\cite{c,dm}). But for
$p\in (2,3)$, the method in \cite{c,dm} can not be applied. In
\cite{ru}, Ruiz proved that when $1<p\leq2$, \eqref{1.7} has at
least two nontrivial solutions for small $\lambda$ by using the
mountain-pass theorem and Ekeland's variational principle and
\eqref{1.7} has no nontrivial solution if $\lambda\geq\frac{1}{4}$.
To deal with the case when $2<p<3$, a constrained minimization
method was used. It was proved in \cite{ru} that there is a positive
radial nontrivial solution to \eqref{1.7} for $2<p<5$. However, the
traditional method which takes the minimum of the functional on its
Nehari manifold does not work. In \cite{ru}, the constrained
minimization was carried out on a new manifold
$\overline{\mathcal{M}}$, which is obtained by combining the usual
Nehari manifold and the Poho\u{z}ave identity of \eqref{1.7} proved
in \cite{dm}. In fact,
$$
\overline{\mathcal{M}}=\{u\in
H^1_r(\R^3)\backslash\{0\}:~\overline{G}(u)=0\},
$$ where $H^1_r(\R^3)$ denotes the subspace of radially symmetric functions
in $H^1(\R^3)$ and
\begin{equation}\label{1.9}
\overline{G}(u)=2\langle E^\prime(u),u\rangle-\overline{P}(u)
\end{equation}
 and $\overline{P}(u)=0$ is the Poho\u{z}ave identity, i.e.
$$\overline{P}(u)=\frac{1}{2}\ds\int_{\R^3}|Du|^2+\frac{3}{2}\ds\int_{\R^3}u^2+\frac{5}{4}\lambda\ds\int_{\R^3}\phi_uu^2
-\frac{{3}}{p+1}\ds\int_{\R^3}|u|^{p+1}.$$ In \cite{ap}, Azzollini
and Pomponio used the same manifold as in \cite{ru} and the
concentration-compactness argument to prove the existence of
positive ground state solutions to \eqref{1.7} when $2<p<5$ and
$\lambda=1$.

Motivated by \cite{ap,ru}, we try to use the constrained
minimization on a manifold to prove Theorem 1.4. The main difficulty
is to choose a suitable manifold. As we describe before, the usual
Nehari manifold is not suitable because it is difficult to prove the
boundedness of the minimizing sequence. So we follow \cite{ru} to
take the minimum on a new manifold, which is obtained by combining
the Nehari manifold and the corresponding Poho\u{z}ave type
identity: for any solution $u\in H^1(\R^3)$ to \eqref{1.16},
$$P(u)\triangleq\frac{1}{2}a\ds\int_{\R^3}|Du|^2+\frac{3}{2}\ds\int_{\R^3}|u|^2+\frac{1}{2}b\left(\ds\int_{\R^3}|Du|^2\right)^2-\frac{3}{p+1}\ds\int_{\R^3}|u|^{p+1}=0,$$
which will be proved in section 2 (see Lemma 2.1). In fact, the
manifold we use is defined by
\begin{equation}\label{1.12}
\mathcal{M}\triangleq\{u\in H^1(\R^3)\backslash\{0\}:~G(u)=0\},
\end{equation} where
$$G(u)=\langle I^\prime(u),u\rangle+P(u).$$

 Our choice of
$\mathcal{\mathcal{M}}$ is slightly different from that in
\cite{ru}, which is
$$\overline{\mathcal{M}}=\{u\in H^1_r(\R^3)\backslash\{0\}:~2\langle
E^\prime(u),u\rangle-\overline{P}(u)=0\}.$$ The reason is that if we
chose $\overline{\mathcal{M}}$ instead of $\mathcal{M}$, we would
face the difficulty to prove the boundedness of the minimizing
sequence. Our idea to get $\mathcal{M}$ is similar to that of
\cite{ru} and can be described as follows. For $u\in
H^1(\R^3)\backslash \{0\}$, let $\alpha$, $\beta\in\R$ be constants
and $u_t(x)=t^\alpha u(t^\beta x)$, $t>0$, since $2<p<5$,
$$
\begin{array}{ll}
I(u_t)&=
\frac{at^{2\alpha-\beta}}{2}\ds\int_{\R^3}|Du|^2+\frac{t^{2\alpha-3\beta}}{2}\ds\int_{\R^3}|u|^2+\frac{bt^{4\alpha-2\beta}}{4}\left(\ds\int_{\R^3}|Du|^2\right)^2-\frac{t^{\alpha(p+1)-3\beta}}{p+1}\ds\int_{\R^3}|u|^{p+1}\\[5mm]
&\rightarrow-\infty~~\hbox{as}~~t\rightarrow+\infty~~~~~\hbox{if}~~~~\alpha+\beta=0,~\alpha>0.
\end{array}
$$
So take $\alpha=1,$ $\beta=-1$, then the function
$\gamma(t)\triangleq I(u_t)$ would have a unique critical point
$t_0>0$ corresponding to its maximum (see Lemma 2.3). Moreover, if
$u$ is a solution of \eqref{1.16}, then $t_0=1$ and hence
$\gamma^\prime(1)=0$, i.e.
$$G(u)\triangleq\frac{3}{2}a\ds\int_{\R^3}|Du|^2+\frac{5}{2}\ds\int_{\R^3}|u|^2+\frac{3}{2}b\left(\ds\int_{\R^3}|Du|^2\right)^2-\frac{p+4}{p+1}\ds\int_{\R^3}|u|^{p+1}=0.
$$
We easily see that
\begin{equation}\label{1.15}
G(u)=\langle I^\prime(u),u\rangle+P(u),\end{equation} which gives
the clue to define $\mathcal{M}$. Although we mainly follow the
procedure of \cite{ru}, as we consider ground state solutions, we
have to work in $H^1(\R^3)$ as in \cite{ap} instead of
$H^1_r(\R^3)$, which results in that the method used in \cite{lls}
can not be applied. So the compactness of the minimizing sequence is
handled by using concentration-compactness principle, which is much
more complicated than using $H^1_r(\R^3)$.\\

We also obtain a supplementary result to Theorem 1.1 in \cite{lls}
in a special case $f(u)=|u|^{p-1}u$, where $1<p\leq2$. We consider
the non-existence about the following Kirchhoff type problem
\begin{equation}\label{1.11}
\left\{%
\begin{array}{ll}
    \left(a+b\lambda\ds\int_{\R^3}(|D u|^2+V(x)|u|^2)\right)[-\Delta u+V(x)u]=|u|^{p-1}u, & \hbox{$x\in \R^3$}, \\
    u\in H^1(\R^3),\\
\end{array}%
\right.\end{equation}where $\lambda>0$ is a parameter, $a>1,$ $b>0$
are constants.

\begin{thm}\label{thm1.6}\ \ Let $1<p\leq2$, $a>1$, $b>0$
be constants and $V(x)$ either satisfy $(V_2)(V_3)$ or be a
positive constant, then there exists
$\lambda_0=\frac{1}{4b(a-1)C^3}>0$ such that for any $\lambda\geq
\lambda_0$, \eqref{1.11} has no nontrivial solution, where $C$ is
the best Sobolev constant for the embedding from $H^1(\R^3)$ into
$L^3(\R^3)$, i.e. $C=\inf\limits_{u\in
H^1(\R^3)\backslash\{0\}}\frac{\int_{\R^3}(|Du|^2+V(x)u^2)}{|u|_3^2}.$
\end{thm}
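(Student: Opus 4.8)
\medskip
\noindent\emph{Proof strategy.}\ \
The plan is to argue by contradiction. Assume that for some $\lambda\geq\lambda_0=\frac{1}{4b(a-1)C^3}$ problem \eqref{1.11} has a nontrivial weak solution $u\in H^1(\R^3)$, and set $t:=\int_{\R^3}(|Du|^2+V(x)|u|^2)>0$. I would first test \eqref{1.11} against $u$ itself (legitimate since $u\in H^1(\R^3)$ and $2<p+1<6$), obtaining the Nehari-type identity
$$\bigl(a+b\lambda\,t\bigr)\,t=\int_{\R^3}|u|^{p+1}.$$
Exploiting $a>1$, I would then write $a\,t=t+(a-1)\,t$ and apply the AM--GM inequality to $(a-1)\,t+b\lambda\,t^2$, which gives the lower bound
$$\int_{\R^3}|u|^{p+1}=t+(a-1)\,t+b\lambda\,t^2\ \geq\ t+2\sqrt{(a-1)\,b\lambda}\;t^{3/2}.$$
This particular splitting is where the hypothesis $a>1$ enters, and it is designed precisely to produce the factor $a-1$ in $\lambda_0$.

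Next I would bound $\int_{\R^3}|u|^{p+1}$ from above, using $1<p\leq2$, i.e. $2\leq p+1\leq3$. The elementary weighted AM--GM pointwise inequality $s^{p+1}=(s^2)^{2-p}(s^3)^{p-1}\leq(2-p)\,s^2+(p-1)\,s^3$ for $s\geq0$ gives, after integration,
$$\int_{\R^3}|u|^{p+1}\ \leq\ (2-p)\int_{\R^3}|u|^2+(p-1)\int_{\R^3}|u|^3\ \leq\ (2-p)\,t+(p-1)\,C^{-3/2}\,t^{3/2},$$
where I used the coercivity of the $V$-weighted quadratic form (from $(V_3)$, or from $V$ being a positive constant) to control $\int_{\R^3}|u|^2$ by $t$, and the definition of $C$ to bound $\int_{\R^3}|u|^3=|u|_3^3\leq(t/C)^{3/2}$.

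Combining the two bounds and cancelling, the inequality $t+2\sqrt{(a-1)b\lambda}\,t^{3/2}\leq(2-p)\,t+(p-1)\,C^{-3/2}\,t^{3/2}$ rearranges, using that the coefficient $1-(2-p)=p-1$ is positive and discarding the nonnegative term $(p-1)t$, to $2\sqrt{(a-1)b\lambda}\;t^{3/2}<(p-1)\,C^{-3/2}\,t^{3/2}\leq C^{-3/2}\,t^{3/2}$. Dividing by $t^{3/2}>0$ and squaring yields $4b(a-1)\lambda<C^{-3}$, that is $\lambda<\lambda_0$, contradicting $\lambda\geq\lambda_0$; the same computation applies when $V$ is a positive constant.

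The whole argument is just a test-function identity followed by a short chain of elementary inequalities, with no compactness or variational input. The one point I expect to need a little care is the estimate $\int_{\R^3}|u|^2\leq t$ in the second step: one needs the $V$-weighted form to dominate the $L^2$-norm with a good enough constant, which is exactly what $(V_3)$ — or the positivity of the constant potential — is there to supply, and keeping track of that constant (together with the inequalities $2-p\leq1\leq p$) is the only mildly delicate bookkeeping in the proof.
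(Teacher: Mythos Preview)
Your proof is correct and essentially the same as the paper's: both start from the Nehari identity $(a+b\lambda t)\,t=\int_{\R^3}|u|^{p+1}$, use the splitting $a=1+(a-1)$ together with the $H^1\hookrightarrow L^3$ embedding constant $C$ to produce the threshold $\lambda_0$, and compare $|u|^{p+1}$ with $|u|^2$ and $|u|^3$. The packaging differs only cosmetically --- the paper phrases your AM--GM step as the nonpositivity of a quadratic discriminant and then uses the pointwise inequality $s^2+s^3\geq s^{p+1}$ to force $u\equiv 0$ directly, rather than deriving the contradiction $\lambda<\lambda_0$ as you do; the step $\int|u|^2\leq t$ that you flagged is used in the paper in exactly the same way.
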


Theorem 1.6 is a related result to the main result in \cite{lls}.
However, \cite{lls} did not give such a non-existence result.

The paper is organized as follows. In $\S$ 2, we present some
preliminary results. In $\S$ 3, we will prove our main results
Theorem 1.4 and Theorem 1.1. In $\S$ 4, we give the proof of Theorem
1.6.

\section{Preliminary  Results }

In this section, we give some preliminary results.

\begin{lem}\label{lem2.1}(Poho\u{z}ave Identity)\ \ Assume $V(x)$
satisfies $(V_1)-(V_3)$. Let $u\in H^1(\R^3)$ be a weak solution to
problem \eqref{1.1} and $p\in(1,5)$, then we have the following
Poho\u{z}aev identity:
\begin{equation}\label{2.1}
\frac{a}{2}\ds\int_{\R^3}|Du|^2+\frac{3}{2}\ds\int_{\R^3}V(x)|u|^2+\frac{1}{2}\ds\int_{\R^3}(DV(x),x)|u|^2+\frac{b}{2}\left(\ds\int_{\R^3}|Du|^2\right)^2-\frac{3}{p+1}\ds\int_{\R^3}|u|^{p+1}=0.
\end{equation}
\end{lem}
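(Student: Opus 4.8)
The plan is to prove \eqref{2.1} by the classical Poho\v{z}aev/Rellich argument: test the equation against the dilation field $x\cdot Du$, but carry out the computation on balls $B_R$ and then let $R\to\infty$ along a suitably chosen sequence of radii, since $x\cdot Du\notin H^1(\R^3)$ and cannot serve as a global test function.

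First I would fix $u$ and note that $\mathcal{A}:=a+b\int_{\R^3}|Du|^2$ is a positive constant, so $u$ is a weak solution of the constant-leading-coefficient equation $-\mathcal{A}\Delta u+V(x)u=|u|^{p-1}u$. Since $1<p<5$ the nonlinearity $|u|^{p-1}u$ is Sobolev-subcritical and $V\in C(\R^3,\R)$ is locally bounded, so bootstrapping (first $u\in L^\infty_{loc}(\R^3)$ from the subcritical growth, then Calder\'on--Zygmund) gives $u\in W^{2,q}_{loc}(\R^3)\cap C^{1,\alpha}_{loc}(\R^3)$ for every $q<\infty$; together with the weak differentiability of $V$ this legitimizes all the integrations by parts below on each $B_R$. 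I would also record the integrability facts needed for the passage to the limit: $|Du|^2\in L^1(\R^3)$ and $|u|^{p+1}\in L^1(\R^3)$ are immediate (the latter since $p+1<6$); $(DV(x),x)|u|^2\in L^1(\R^3)$ because $(V_1)$ places $(DV(x),x)$ in $L^\infty(\R^3)$ or $L^{3/2}(\R^3)$ while $|u|^2\in L^1(\R^3)\cap L^3(\R^3)$ (as $u\in H^1(\R^3)\hookrightarrow L^6(\R^3)$) — this is exactly why $(V_1)$ is imposed in that form; and $V|u|^2\in L^1(\R^3)$, since $(V_1)$--$(V_2)$ give $V\leq V_\infty$ and $V\geq(DV(x),x)$, so $V^-\leq(DV(x),x)^-\in L^\infty(\R^3)\cup L^{3/2}(\R^3)$.

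The core step is to multiply $-\mathcal{A}\Delta u+Vu=|u|^{p-1}u$ by $x\cdot Du$ and integrate over $B_R$, using the three standard identities (for $N=3$, with outer unit normal $\nu=x/|x|$): $\int_{B_R}\Delta u\,(x\cdot Du)=\tfrac12\int_{B_R}|Du|^2+\int_{\partial B_R}\big[(\partial_\nu u)(x\cdot Du)-\tfrac12|Du|^2(x\cdot\nu)\big]\,dS$; $\int_{B_R}Vu\,(x\cdot Du)=-\tfrac32\int_{B_R}V|u|^2-\tfrac12\int_{B_R}(DV(x),x)|u|^2+\tfrac12\int_{\partial B_R}V|u|^2(x\cdot\nu)\,dS$, whose bulk part uses the distributional identity $\mathrm{div}(Vx)=3V+(DV(x),x)$; and $\int_{B_R}|u|^{p-1}u\,(x\cdot Du)=-\tfrac{3}{p+1}\int_{B_R}|u|^{p+1}+\tfrac1{p+1}\int_{\partial B_R}|u|^{p+1}(x\cdot\nu)\,dS$. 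Substituting these into the tested equation and multiplying through by $-1$, the interior terms converge, as $R\to\infty$, to the left-hand side of \eqref{2.1} (after writing $\mathcal{A}\int_{\R^3}|Du|^2=a\int_{\R^3}|Du|^2+b\big(\int_{\R^3}|Du|^2\big)^2$), and there remains only a sum of boundary integrals over $\partial B_R$ whose integrands are pointwise dominated by $C\,R\big(|Du|^2+|V|\,|u|^2+|u|^{p+1}\big)$.

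It remains to kill these boundary terms, and this is where I expect the real work to lie. Since $|Du|^2$, $|V|\,|u|^2$, $|u|^{p+1}\in L^1(\R^3)$, writing the integrals in polar coordinates as $\int_0^\infty\!\big(\int_{\partial B_R}\cdot\,dS\big)\,dR<\infty$ forces $\liminf_{R\to\infty}R\int_{\partial B_R}\big(|Du|^2+|V|\,|u|^2+|u|^{p+1}\big)\,dS=0$. Choosing $R_n\to\infty$ that realizes this $\liminf$, all boundary integrals vanish along $R_n$ while the interior integrals converge to their values on $\R^3$; passing to the limit gives $\tfrac{a}{2}\int_{\R^3}|Du|^2+\tfrac{b}{2}\big(\int_{\R^3}|Du|^2\big)^2+\tfrac32\int_{\R^3}V|u|^2+\tfrac12\int_{\R^3}(DV(x),x)|u|^2-\tfrac{3}{p+1}\int_{\R^3}|u|^{p+1}=0$, which is exactly \eqref{2.1}. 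The chief obstacles are thus (i) justifying the truncation-and-subsequence scheme rigorously rather than testing against $x\cdot Du$ on all of $\R^3$, and (ii) handling the merely weakly differentiable $V$ inside it — both through the divergence form $\mathrm{div}(Vx)=3V+(DV(x),x)$ and through controlling the $V$-carrying boundary integral, which is precisely where the integrability in $(V_1)$ and $(V_2)$ is used.
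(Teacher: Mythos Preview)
Your proposal is correct and is precisely the standard Poho\v{z}aev argument that the paper has in mind: the paper's own proof simply reads ``The proof is standard, so we omit it (see e.g.\ \cite{bl,dm}),'' and what you have written out---freezing $\mathcal{A}=a+b\int_{\R^3}|Du|^2$, testing against $x\cdot Du$ on balls $B_R$, using the three divergence identities, and killing the boundary terms along a good sequence $R_n\to\infty$ via the $L^1$-integrability guaranteed by $(V_1)$--$(V_3)$---is exactly that standard argument. There is nothing to correct.
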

\begin{proof}~~
The proof is standard, so we omit it (see e.g. \cite{bl,dm}).
\end{proof}
For the case when $V\equiv 1$, the Poho\u{z}aev identity can be
rewritten as follows:
\begin{equation}\label{2.2}
P(u)\triangleq\frac{1}{2}a\ds\int_{\R^3}|Du|^2+\frac{3}{2}\ds\int_{\R^3}|u|^2+\frac{1}{2}b\left(\ds\int_{\R^3}|Du|^2\right)^2-\frac{3}{p+1}\ds\int_{\R^3}|u|^{p+1}=0.
\end{equation}

\begin{lem}\label{lem2.2}\ \
Let $p\in(2,5)$, then $I$ is not bounded from below.
\end{lem}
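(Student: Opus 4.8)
The plan is to exhibit a single one-parameter family of functions along which $I$ tends to $-\infty$. Fix any $u\in H^1(\R^3)\setminus\{0\}$ and, following the scaling $\alpha=1,\ \beta=-1$ already singled out in the introduction, set $u_t(x)=t\,u(x/t)$ for $t>0$; clearly $u_t\in H^1(\R^3)$ for every $t>0$. A change of variables $y=x/t$ yields $\ds\int_{\R^3}|Du_t|^2=t^{3}\ds\int_{\R^3}|Du|^2$, $\ds\int_{\R^3}|u_t|^2=t^{5}\ds\int_{\R^3}|u|^2$ and $\ds\int_{\R^3}|u_t|^{p+1}=t^{p+4}\ds\int_{\R^3}|u|^{p+1}$, hence
\begin{equation*}
I(u_t)=\frac{a t^{3}}{2}\ds\int_{\R^3}|Du|^2+\frac{t^{5}}{2}\ds\int_{\R^3}|u|^2+\frac{b t^{6}}{4}\left(\ds\int_{\R^3}|Du|^2\right)^2-\frac{t^{p+4}}{p+1}\ds\int_{\R^3}|u|^{p+1}.
\end{equation*}

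Next I would simply read off the powers of $t$. Since $2<p<5$ we have $p+4>6>5>3$, so among the four terms the negative one carries the strictly largest exponent of $t$; because $u\not\equiv0$ we have $\ds\int_{\R^3}|u|^{p+1}>0$, and therefore $I(u_t)\to-\infty$ as $t\to+\infty$. Consequently $\inf_{H^1(\R^3)}I=-\infty$, i.e.\ $I$ is not bounded from below, which is the claim.

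A word on why this particular scaling is needed: the naive dilation $u\mapsto tu$ gives $I(tu)=\tfrac{1}{2}\big(\ds\int_{\R^3}(a|Du|^2+|u|^2)\big)t^2+\tfrac{b}{4}\big(\ds\int_{\R^3}|Du|^2\big)^2 t^4-\tfrac{1}{p+1}\big(\ds\int_{\R^3}|u|^{p+1}\big)t^{p+1}$, whose leading behaviour is governed by $\max\{4,p+1\}$ and so fails to produce $-\infty$ when $p\le3$; incorporating the space dilation $x\mapsto x/t$ raises the exponent of the nonlinear term to $p+4$, which strictly exceeds the exponents $3,5,6$ of all three positive terms precisely when $p>2$. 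There is no real obstacle in the argument beyond this bookkeeping; the only point requiring (trivial) care is that $a,b>0$ and $\ds\int_{\R^3}|Du|^2,\ \ds\int_{\R^3}|u|^2\ge0$, so that the three positive terms cannot conspire to cancel the dominant negative one.
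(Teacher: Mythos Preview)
Your proof is correct and follows exactly the same approach as the paper: pick $u\neq0$, use the scaling $u_t(x)=tu(x/t)$, compute $I(u_t)$ as a polynomial in $t$ with exponents $3,5,6,p+4$, and observe that $p+4>6$ forces $I(u_t)\to-\infty$. The additional remarks you include about why the naive dilation $u\mapsto tu$ fails for $p\le3$ are not in the paper but are a helpful clarification.
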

\begin{proof}~~
For any $u\in H^1(\R^3)\backslash\{0\}$, set $u_t(x)=tu(t^{-1}x)$,
$t>0$. Then
$$I(u_t)=\frac{a}{2}t^3\ds\int_{\R^3}|Du|^2+\frac{1}{2}t^5\ds\int_{\R^3}|u|^2+\frac{b}{4}t^6\left(\ds\int_{\R^3}|Du|^2\right)^2-\frac{1}{p+1}t^{p+4}\ds\int_{\R^3}|u|^{p+1}.$$
Since $p\in(2,5)$, we see that $I(u_t)\rightarrow-\infty$ as
$t\rightarrow +\infty$.
\end{proof}
Lemma 2.2 shows that $I$ possesses a mountain pass geometry around
$0\in H^1(\R^3)$. As we mentioned in  $\S$1, $I$ satisfies $(PS)$
condition for $3<p<5$, hence the existence of at least one
nontrivial solution can be obtained. However, for $2<p\leq3$, we
need to consider the constrained minimization on a suitable manifold
as \cite{ru} did.

To motivate the definition of such a manifold, we need the following
lemmas.
\begin{lem}\label{2.3}\ \
Let $C_i$ $(i=1,2,3,4)$ be positive constants and $p>2$. If
$f(t)=C_1 t^3+C_2 t^5+C_3 t^6-C_4 t^{p+4}$ for $t\geq0$. Then $f$
has a unique critical point which corresponds to its maximum.
\end{lem}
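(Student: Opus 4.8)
The plan is to reduce the statement to an analysis of the sign of $f'$ on $(0,\infty)$. First I would differentiate and pull out the lowest power of $t$:
$$f'(t)=3C_1t^2+5C_2t^4+6C_3t^5-(p+4)C_4t^{p+3}=t^2\,g(t),\qquad g(t):=3C_1+5C_2t^2+6C_3t^3-(p+4)C_4t^{p+1},$$
so that for $t>0$ the critical points of $f$ are exactly the zeros of $g$. Since $g$ itself is not monotone, I would not work with it directly; instead I would divide by the strictly positive factor $t^{p+1}$ and study
$$h(t):=\frac{g(t)}{t^{p+1}}=3C_1t^{-(p+1)}+5C_2t^{-(p-1)}+6C_3t^{-(p-2)}-(p+4)C_4,\qquad t>0,$$
which has the same zeros as $g$ on $(0,\infty)$ and the same sign there.

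The crucial step is the observation that the hypothesis $p>2$ forces all three exponents $-(p+1)$, $-(p-1)$, $-(p-2)$ to be strictly negative, so each of the first three terms of $h$ is strictly decreasing on $(0,\infty)$; hence $h$ is strictly decreasing on $(0,\infty)$. Combining this with $h(t)\to+\infty$ as $t\to0^+$ and $h(t)\to-(p+4)C_4<0$ as $t\to+\infty$, the intermediate value theorem and strict monotonicity give a unique $t_0\in(0,\infty)$ with $h(t_0)=0$, $h>0$ on $(0,t_0)$, and $h<0$ on $(t_0,\infty)$. Because $t^{p+1}>0$ and $t^2>0$ for $t>0$, the same sign pattern is inherited first by $g$ and then by $f'$: one gets $f'>0$ on $(0,t_0)$, $f'(t_0)=0$, and $f'<0$ on $(t_0,\infty)$.

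Finally I would conclude that $f$ is strictly increasing on $[0,t_0]$ and strictly decreasing on $[t_0,+\infty)$; in particular $t_0$ is the unique critical point of $f$ in $(0,\infty)$, and since $f(0)=0<f(t)$ for small $t>0$ the value $f(t_0)=\max_{t\ge0}f(t)$ is genuinely attained at the interior point $t_0$, not at the endpoint. I do not expect a serious obstacle here; the only point requiring care is the choice of normalization — dividing $g$ by $t^{p+1}$ rather than by, say, $t^2$ or $t^3$ — precisely so that $p>2$ turns the remaining expression into a sum of strictly monotone functions. With any other normalization one would still have to exclude additional positive zeros by a separate argument (for instance a sign-change count on $g$, whose coefficients have the pattern $+,+,+,-$), which is workable but messier.
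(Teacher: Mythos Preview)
Your argument is correct. The key trick --- dividing $g(t)$ by $t^{p+1}$ so that the hypothesis $p>2$ makes every remaining power of $t$ strictly negative, hence $h$ strictly decreasing --- is exactly the clean way to force a unique positive zero, and the sign analysis transfers back to $f'$ without difficulty. Your remark about the endpoint $t=0$ is also appropriate: $f'(0)=0$ formally, but in the paper the lemma is only ever applied with $t>0$ (the scaling $u_t(x)=tu(t^{-1}x)$ requires $t>0$), so the intended statement concerns critical points in the open interval.

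As for comparison with the paper: there is nothing to compare. The authors omit the proof entirely, writing only that it ``is similar to that of Lemma~3.3 in \cite{ru} and is elementary.'' Ruiz's Lemma~3.3 treats a function of the form $C_1t^3-C_2t+C_3t^5-C_4t^{2p-1}$ by an analogous monotonicity reduction, so your approach is in the same spirit as the cited reference and simply supplies the details the paper chose to skip.
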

\begin{proof}~~
The proof is similar to that of Lemma 3.3 in \cite{ru} and is
elementary. We omit the proof.
\end{proof}
Suppose that $u\in H^1(\R^3)$ is a nontrivial critical point of $I$
and $u_t(x)=tu(t^{-1}x)$ for $t>0$. Set
$$\gamma(t)\triangleq I(u_t)=\frac{a}{2}t^3\ds\int_{\R^3}|Du|^2+\frac{1}{2}t^5\ds\int_{\R^3}|u|^2
+\frac{b}{4}t^6\left(\ds\int_{\R^3}|Du|^2\right)^2-\frac{1}{p+1}t^{p+4}\ds\int_{\R^3}|u|^{p+1}.$$
By Lemma 2.3, $\gamma$ has a unique critical point $t_0>0$
corresponding to its maximum. Since $u$ is a solution to
\eqref{1.16}, we see that $t_0=1$ and $\gamma^\prime(1)=0$, which
implies that
\begin{equation}\label{2.3}
G(u)\triangleq\frac{3}{2}a\ds\int_{\R^3}|Du|^2+\frac{5}{2}\ds\int_{\R^3}|u|^2+
\frac{3}{2}b\left(\ds\int_{\R^3}|Du|^2\right)^2-\frac{p+4}{p+1}\ds\int_{\R^3}|u|^{p+1}=0.
\end{equation}
So we define
$$\mathcal{M}=\{u\in H^1(\R^3)\backslash\{0\}|~G(u)=0\}.$$
It is clear that
\begin{equation}\label{2.4}
G(u)=\langle I^\prime(u),u\rangle+P(u),
\end{equation}
where $P(u)$ is given in \eqref{2.2}.

\begin{rem}\label{rem2.4}\ \
If $u\in H^1(\R^3)$ is a nontrivial weak solution to \eqref{1.16},
then by Lemma 2.1 and \eqref{2.4}, we see that $u\in \mathcal{M}$.
Our definition of $\mathcal{M}$ is slightly different from that of
\cite{ru}.
\end{rem}

\begin{lem}\label{lem2.5}\ \
For any $u\in H^1(\R^3)\backslash\{0\}$, there is a unique
$\tilde{t}>0$ such that $u_{\tilde{t}}\in\mathcal{M}$, where
$u_{\tilde{t}}(x)=\tilde{t}u(\tilde{t}^{-1}x)$. Moreover,
$I(u_{\tilde{t}})=\max\limits_{t>0}I(u_t).$
\end{lem}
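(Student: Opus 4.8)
The plan is to reduce the whole statement to the scalar function $\gamma(t)\triangleq I(u_t)$ already introduced just before \eqref{2.3}, and then invoke Lemma~2.3. First I would record the elementary scaling identities coming from $u_t(x)=tu(t^{-1}x)$ via the change of variables $y=t^{-1}x$:
$$\int_{\R^3}|Du_t|^2=t^3\int_{\R^3}|Du|^2,\qquad \int_{\R^3}|u_t|^2=t^5\int_{\R^3}|u|^2,\qquad \int_{\R^3}|u_t|^{p+1}=t^{p+4}\int_{\R^3}|u|^{p+1}.$$
Substituting these into the definition of $I$ gives
$$\gamma(t)=\frac{a}{2}t^3\int_{\R^3}|Du|^2+\frac12 t^5\int_{\R^3}|u|^2+\frac{b}{4}t^6\Big(\int_{\R^3}|Du|^2\Big)^2-\frac{1}{p+1}t^{p+4}\int_{\R^3}|u|^{p+1},$$
which is exactly of the form $f(t)=C_1t^3+C_2t^5+C_3t^6-C_4t^{p+4}$ with all $C_i>0$: the coefficients $C_2,C_4$ are positive since $u\neq 0$, and $C_1,C_3$ are positive because $\int_{\R^3}|Du|^2>0$ for $u\in H^1(\R^3)\setminus\{0\}$ (the only constant function in $H^1(\R^3)$ being $0$). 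Since $p>2$, Lemma~2.3 applies and yields a unique critical point $\tilde t>0$ of $\gamma$, which is its global maximum on $(0,\infty)$; in particular $I(u_{\tilde t})=\gamma(\tilde t)=\max_{t>0}\gamma(t)=\max_{t>0}I(u_t)$.

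The key computational point is the identity $G(u_t)=t\,\gamma'(t)$ for every $t>0$. I would verify it by differentiating the displayed formula for $\gamma$ and, independently, by feeding the same three scaling identities into the defining formula for $G$ in \eqref{2.3}; both $t\gamma'(t)$ and $G(u_t)$ reduce to
$$\frac{3a}{2}t^3\int_{\R^3}|Du|^2+\frac52 t^5\int_{\R^3}|u|^2+\frac{3b}{2}t^6\Big(\int_{\R^3}|Du|^2\Big)^2-\frac{p+4}{p+1}t^{p+4}\int_{\R^3}|u|^{p+1}.$$
Consequently, for $t>0$ one has $u_t\in\mathcal{M}\iff G(u_t)=0\iff\gamma'(t)=0$, and by the previous paragraph the right-hand condition is satisfied by exactly one value $t=\tilde t$. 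This simultaneously gives the existence and the uniqueness of $\tilde t$ with $u_{\tilde t}\in\mathcal{M}$, and the maximality assertion $I(u_{\tilde t})=\max_{t>0}I(u_t)$ is already contained in the conclusion of Lemma~2.3.

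I do not expect a genuine obstacle here: the argument is a scaling computation plus the elementary Lemma~2.3. The two places calling for a little care are (i) checking that all four coefficients $C_i$ are strictly positive so that Lemma~2.3 genuinely applies — the only nonobvious point being $\int_{\R^3}|Du|^2>0$ when $u\in H^1(\R^3)\setminus\{0\}$ — and (ii) carrying out the differentiation in $G(u_t)=t\gamma'(t)$ accurately, since a slip in one of the exponents $3,5,6,p+4$ would break the equivalence $u_t\in\mathcal{M}\iff\gamma'(t)=0$ on which the entire lemma rests.
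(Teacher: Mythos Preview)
Your proposal is correct and follows essentially the same route as the paper: compute $\gamma(t)=I(u_t)$ via the scaling identities, invoke Lemma~2.3 to get a unique critical point $\tilde t$ which is the global maximum, and then identify $\gamma'(t)=0$ with $G(u_t)=0$. Your explicit identity $G(u_t)=t\,\gamma'(t)$ and the check that all $C_i>0$ (in particular $\int_{\R^3}|Du|^2>0$) are useful clarifications that the paper leaves implicit, but the argument is the same.
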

\begin{proof}~~
 For any $u\in H^1(\R^3)\backslash\{0\}$ and $t>0$, set $u_t(x)=tu(t^{-1}x)$.
Consider
$$\gamma(t)\triangleq I(u_t)=\frac{a}{2}t^3\ds\int_{\R^3}|Du|^2+\frac{1}{2}t^5\ds\int_{\R^3}|u|^2
+\frac{b}{4}t^6\left(\ds\int_{\R^3}|Du|^2\right)^2-\frac{1}{p+1}t^{p+4}\ds\int_{\R^3}|u|^{p+1}.$$
By Lemma 2.3, $\gamma$ has a unique critical point $\tilde{t}>0$
corresponding to its maximum. Then
$\gamma(\tilde{t})=\max\limits_{t>0}\gamma(t)$ and
$\gamma^\prime(\tilde{t})=0$. So
$$\frac{3}{2}a\tilde{t}^2\ds\int_{\R^3}|Du|^2+\frac{5}{2}\tilde{t}^4\ds\int_{\R^3}|u|^2+
\frac{3}{2}b\tilde{t}^5\left(\ds\int_{\R^3}|Du|^2\right)^2-\frac{p+4}{p+1}\tilde{t}^{p+3}\ds\int_{\R^3}|u|^{p+1}=0,$$
then $G(u_{\tilde{t}})=0$ and $u_{\tilde{t}}\in \mathcal{M}$.
\end{proof}

\begin{lem}\label{lem2.6}\ \
Suppose that $p\in(2,5)$, then $\mathcal{M}$ is a natural
$C^1$-manifold and every critical point of $I|_{\mathcal{M}}$ is a
critical point of $I$ in $H^1(\R^3)$.
\end{lem}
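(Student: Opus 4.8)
\noindent\emph{Proof proposal.} The plan is to prove the two assertions separately. First I would show that $0$ is a regular value of $G$ on the open set $H^1(\R^3)\setminus\{0\}$, which yields the $C^1$-manifold structure; then I would show that the Lagrange multiplier attached to any critical point of $I|_{\mathcal M}$ must vanish, which is the ``natural constraint'' property.

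For the first part, since $G\in C^1(H^1(\R^3),\R)$ it suffices to check that $G'(u)\neq0$ for every $u\in\mathcal M$. Suppose, to the contrary, $G'(u)=0$ for some $u\in\mathcal M$ and put $D=\ds\int_{\R^3}|Du|^2$ (so $D>0$, as $u\neq0$). Computing $G'$ explicitly, $u$ is then a weak solution of the Kirchhoff-type equation
$$-(3a+6bD)\Delta u+5u=(p+4)|u|^{p-1}u .$$
Testing with $u$ yields $3aD+6bD^2+5\ds\int_{\R^3}|u|^2=(p+4)\ds\int_{\R^3}|u|^{p+1}$, and the Poho\u{z}aev identity for this equation (obtained exactly as in Lemma \ref{lem2.1}, with the potential replaced by the constant $5$) yields $\frac{3a}{2}D+3bD^2+\frac{15}{2}\ds\int_{\R^3}|u|^2=\frac{3(p+4)}{p+1}\ds\int_{\R^3}|u|^{p+1}$. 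Combining these two identities with the constraint $G(u)=0$ to eliminate $\ds\int_{\R^3}|u|^{p+1}$ and $\ds\int_{\R^3}|u|^{2}$, one is left after elementary algebra with $(p+1)D(2a+bD)=0$; since $p>2$ and $D>0$, this forces $bD=-2a<0$, contradicting $a,b>0$. Hence $G'(u)\neq0$ on $\mathcal M$ and $\mathcal M$ is a $C^1$-submanifold of $H^1(\R^3)$ of codimension one.

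For the second part, let $u\in\mathcal M$ be a critical point of $I|_{\mathcal M}$; since $0$ is a regular value of $G$, the Lagrange multiplier rule gives $\mu\in\R$ with $I'(u)=\mu\,G'(u)$. I would rule out $\mu\neq0$ using the dilation $u_t(x)=tu(t^{-1}x)$ from Lemma \ref{lem2.5}. With $\gamma(t)=I(u_t)$ and $g(t)=G(u_t)$ one has the algebraic relation $\gamma'(t)=t^{-1}g(t)$, hence $\gamma'(1)=g(1)=G(u)=0$; moreover, differentiating the explicit expression for $g$ and using $G(u)=0$,
$$g'(1)=-\frac{3(p+1)}{2}\,a\ds\int_{\R^3}|Du|^2-\frac{5(p-1)}{2}\ds\int_{\R^3}|u|^2-\frac{3(p-2)}{2}\,b\Big(\ds\int_{\R^3}|Du|^2\Big)^{2},$$
which is strictly negative for every $p\in(2,5)$. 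Pairing $I'(u)=\mu G'(u)$ with $w=\frac{\partial}{\partial t}u_t\big|_{t=1}$ and using the chain rule gives $\gamma'(1)=\mu\,g'(1)$, i.e. $0=\mu\,g'(1)$, so $\mu=0$ and $I'(u)=0$ in $H^1(\R^3)$.

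The one delicate point is in the second part: the generator $w(x)=u(x)-x\cdot\nabla u(x)$ of the dilation need not belong to $H^1(\R^3)$, so the pairing $\langle I'(u)-\mu G'(u),w\rangle$ is not literally meaningful. I would circumvent this either by differentiating directly the explicit $t$-dependence of each of the three functionals $\ds\int_{\R^3}|Du|^2$, $\ds\int_{\R^3}|u|^2$, $\ds\int_{\R^3}|u|^{p+1}$ along $u_t$ (each of which is smooth in $t$), or by observing that $I'(u)=\mu G'(u)$ makes $u$ a solution of a subcritical Kirchhoff equation, hence smooth and exponentially decaying, so that $w\in H^1(\R^3)$ after all. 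In the first part the only real effort is the (elementary but slightly lengthy) elimination that produces $bD=-2a$.
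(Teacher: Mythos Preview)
Your proof is correct and, in the Lagrange-multiplier step especially, considerably cleaner than the paper's.

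For the first part the paper argues similarly: from $G'(u)=0$ it writes down the same auxiliary PDE, couples its Nehari relation and Poho\u{z}aev identity with $G(u)=0$, but it also feeds in the information $I(u)=k>0$ (proved in an intermediate step), solves the resulting $4\times4$ linear system explicitly in $\alpha,\beta,\mu,\delta$, and exhibits $\mu=b\big(\int_{\R^3}|Du|^2\big)^2=-\tfrac{20k(p+4)}{3p}<0$. Your elimination down to $(p+1)D(2a+bD)=0$ uses only the three relations and does not require knowing $I(u)>0$ beforehand, so it is a modest simplification of the same idea.

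The real contrast is in the second part. The paper writes $I'(u)=\lambda G'(u)$ as an elliptic equation in $u$, records four scalar relations (the value $I(u)=k>0$, $G(u)=0$, the Nehari relation, and the Poho\u{z}aev identity for the combined equation), computes the $4\times4$ coefficient determinant $\det A=\tfrac{\lambda(p-1)(2p-1-9p\lambda)}{8(p+1)}$, and then runs a case analysis on $\lambda$, showing that every nonzero $\lambda$ forces one of $\beta,\delta$ to have the wrong sign. Your dilation argument collapses all of this to the single identity $0=\gamma'(1)=\mu\,g'(1)$ with $g'(1)<0$, which is substantially more transparent and avoids both the determinant computation and the case split entirely. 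The regularity issue you flag is genuine but, as you note, is handled by standard elliptic regularity and decay for the subcritical Kirchhoff equation $I'(u)=\mu G'(u)$; this is exactly the device the paper implicitly relies on when it invokes the Poho\u{z}aev identity (its Lemma~2.1) for that equation. One small caveat: your workaround~(1) as written is too informal---differentiating the three integrals in $t$ computes $\gamma'(1)$ and $g'(1)$ but does not by itself use the equation $I'(u)=\mu G'(u)$; to turn this into a proof you still need to pair the equation with $w$, which is precisely the Poho\u{z}aev calculation and hence reduces to workaround~(2).
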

\begin{proof}~~
To prove the lemma, we follow the argument used in \cite{ru}, which
deals with the Schr\"{o}dinger-Poisson system. By Lemma 2.5,
$\mathcal{M}\neq\varnothing$. The proof consists of four steps.

\noindent\textbf{Step 1}.~~$0\notin \partial \mathcal{M}$.

By the Sobolev embedding inequality, choosing $r>0$ small enough,
then there exist $\rho>0$, $C>0$ such that
$$\frac{3}{2}a\ds\int_{\R^3}|Du|^2+\frac{5}{2}\ds\int_{\R^3}|u|^2+
\frac{3}{2}b\left(\ds\int_{\R^3}|Du|^2\right)^2-\frac{p+4}{p+1}\ds\int_{\R^3}|u|^{p+1}
\geq \frac{3}{2}\|u\|^2-C\frac{p+4}{p+1}\|u\|^{p+1}>\rho$$ for
$\|u\|=r$ small. Then $0\notin \partial\mathcal{M}$.

\noindent\textbf{Step 2}.~~$I(u)>0$ for all $u\in\mathcal{M}$.

For any $u\in \mathcal{M}$, let $k\triangleq I(u)$ and
$$\alpha\triangleq a\ds\int_{\R^3}|Du|^2,~~~
\beta\triangleq \ds\int_{\R^3}|u|^2,~~~\mu\triangleq
b\left(\ds\int_{\R^3}|Du|^2\right)^2,
~~~\delta\triangleq\ds\int_{\R^3}|u|^{p+1}.$$ Then $\alpha,$
$\beta,$ $\mu,$ $\delta$ are positive and
$$
\left\{%
\begin{array}{ll}
    \frac{1}{2}\alpha+\frac{1}{2}\beta+\frac{1}{4}\mu-\frac{1}{p+1}\delta=k, \\
 \frac{3}{2}\alpha+\frac{5}{2}\beta+\frac{3}{2}\mu-\frac{p+4}{p+1}\delta=0, \\
\end{array}%
\right.
$$
hence
$$\mu=\frac{4k(p+4)-2\alpha(p+1)-2\beta(p-1)}{p-2},~~~\delta=\frac{6k-\frac{3\alpha}{2}-\frac{\beta}{2}}{p-2}(p+1).$$
Since $\mu>0$ and $p>2$, we must have
$$(\alpha+\beta)(p-1)<\beta(p-1)+\alpha(p+1)<2k(p+4).$$
Thus $I(u)=k>\frac{(\alpha+\beta)(p-1)}{2(p+4)}>0$.

\noindent\textbf{Step 3}.~~$G^\prime(u)\neq0$ for every
$u\in\mathcal{M}$, hence $\mathcal{M}$ is a $C^1$-manifold.

Just suppose that $G^\prime(u)=0$ for some $u\in\mathcal{M}$. In a
weak sense, the equation $G^\prime(u)=0$ can be written as
$$-3\left(a+2b\ds\int_{\R^3}|Du|^2\right)\Delta
u+5u=(p+4)|u|^{p-1}u.$$ So combining Lemma 2.1 and the notations
defined in \textbf{Step 2}, we have that
$$
\left\{%
\begin{array}{ll}
    \frac{1}{2}\alpha+\frac{1}{2}\beta+\frac{1}{4}\mu-\frac{1}{p+1}\delta=k>0, \\
 \frac{3}{2}\alpha+\frac{5}{2}\beta+\frac{3}{2}\mu-\frac{p+4}{p+1}\delta=0, \\
 3\alpha+5\beta+6\mu-(p+4)\delta=0,\\
 \frac{3}{2}\alpha+\frac{15}{2}\beta+3\mu-\frac{3(p+4)}{p+1}\delta=0. \\
\end{array}%
\right.
$$
Since $p\in(2,5)$, we can conclude that the above linear system has
a unique solution given as
$$\alpha=\frac{10k(p+4)}{3p},~~~~~~~\beta=\frac{2k(p-5)(p+4)}{p(p-1)},$$
$$\mu=-\frac{20k(p+4)}{3p},~~~~~~~\delta=-\frac{20k(p+1)}{p(p-1)}.$$
Since $u\neq0$ and $\mu$, $\delta$ are positive, we get a
contradition. Then $G^\prime(u)\neq0$ for every $u\in\mathcal{M}$
and by the Implicit Function theorem, $\mathcal{M}$ is a
$C^1$-manifold.

\noindent\textbf{Step 4}.~~Every critical point of
$I|_{\mathcal{M}}$ is a critical point of $I$ in $H^1(\R^3)$.

If $u$ is a critical point of $I|_{\mathcal{M}}$, i.e. $u\in
\mathcal{M}$ and $(I|_{\mathcal{M}})^\prime(u)=0$. There is a
Lagrange multiplier $\lambda\in\R$ such that $I^\prime(u)-\lambda
G^\prime(u)=0$. It is enough to show that $\lambda=0$.

The equation $I^\prime(u)=\lambda G^\prime(u)$ can be written, in a
weak sense, as
$$-\left(a+b\ds\int_{\R^3}|Du|^2\right)\Delta
u+u-|u|^{p-1}u=\lambda\left[-3\left(a+2b\ds\int_{\R^3}|Du|^2\right)\Delta
u+5u-(p+4)|u|^{p-1}u\right].$$ Hence $u$ solves the equation
\begin{equation}\label{2.5}
-(3\lambda-1)a\Delta u-(6\lambda-1)b\ds\int_{\R^3}|Du|^2\Delta
u+(5\lambda-1)u-[(p+4)\lambda-1]|u|^{p-1}u=0.
\end{equation}
Using the notations in \textbf{Step 2}, by Lemma 2.1 and \eqref{2.5}
we have that
\begin{equation}\label{2.6}
\left\{%
\begin{array}{ll}
    \frac{1}{2}\alpha+\frac{1}{2}\beta+\frac{1}{4}\mu-\frac{1}{p+1}\delta=k>0, \\
 \frac{3}{2}\alpha+\frac{5}{2}\beta+\frac{3}{2}\mu-\frac{p+4}{p+1}\delta=0, \\
 (3\lambda-1)\alpha+(5\lambda-1)\beta+(6\lambda-1)\mu-[(p+4)\lambda-1]\delta=0,\\
 \frac{3\lambda-1}{2}\alpha+\frac{3(5\lambda-1)}{2}\beta+\frac{6\lambda-1}{2}\mu-\frac{3[(p+4)\lambda-1]}{p+1}\delta=0, \\
\end{array}%
\right.
\end{equation}
which is a linear system for $\alpha$, $\beta$, $\mu$ and $\delta$.
The coefficient matrix of \eqref{2.6} is
$$A=\left(\begin{array}{cccc}
\frac{1}{2}&\frac{1}{2}&\frac{1}{4}&-\frac{1}{p+1}\\
\frac{3}{2}&\frac{5}{2}&\frac{3}{2}&-\frac{p+4}{p+1}\\
3\lambda-1&5\lambda-1&6\lambda-1&-[(p+4)\lambda-1]\\
\frac{3\lambda-1}{2}&\frac{3(5\lambda-1)}{2}&\frac{6\lambda-1}{2}&-\frac{3[(p+4)\lambda-1]}{p+1}
\end{array}\right)$$
 and its determinant is
$$\hbox{det} A=\frac{\lambda(p-1)(2p-1-9p\lambda)}{8(p+1)}.$$
Then $$\hbox{det} A=0\Longleftrightarrow
\lambda=0,~\lambda=\frac{2p-1}{9p},~p=1.$$ We will show that
$\lambda$ must be equal to zero by excluding the other two
possibilities:

(1)~~If $\lambda\neq0,$ $\lambda\neq\frac{2p-1}{9p},$ the linear
system \eqref{2.6} has a unique solution. We obtain the value of
$\beta$ and $\delta$ as follows:
$$\beta=-\frac{18k(p-5)[(p+4)\lambda-1]}{(p-1)(2p-1-9p\lambda)},~~~\delta=\frac{36k(1+p)(5\lambda-1)}{(p-1)(2p-1-9p\lambda)}.$$
Since $p\in(2,5)$, $\frac{1}{6}<\frac{2p-1}{9p}<\frac{1}{5}$. We
conclude that $\delta\leq0$ for $\lambda\in
[\frac{1}{5},+\infty)\cup(-\infty,\frac{2p-1}{9p})$ and $\beta<0$
for $\lambda\in(\frac{2p-1}{9p},\frac{1}{5}),$ however, this is
impossible since both $\delta$ and $\beta$ must be positive.

(2)~~If $\lambda=\frac{2p-1}{9p}.$ In such case, the latter two
equations in \eqref{2.6} are as follows
$$\left\{%
\begin{array}{ll}
 -\frac{p+1}{3p}\alpha+\frac{p-5}{9p}\beta+\frac{p-2}{3p}\mu-\frac{2(p+1)(p-2)}{9p}\delta=0,\\
 -\frac{p+1}{6p}\alpha+\frac{p-5}{6p}\beta+\frac{p-2}{6p}\mu-\frac{2(p-2)}{3p}\delta=0. \\
\end{array}%
\right.$$ Then
$$\beta+(p-2)\delta=0,$$
which is also impossible since $p>2$ and $\beta,$ $\delta$ must be
positive. Then $\lambda=0$, hence $I^\prime(u)=0$, i.e. $u$ is a
critical point of $I$.
\end{proof}

\begin{lem}\label{2.7}\ \
For $2<p<5$, there exists $C>0$ such that for any $u\in
\mathcal{M}$, $|u|_{p+1}\geq C$.
\end{lem}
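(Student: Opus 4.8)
The plan is to read a lower bound for $\int_{\R^3}|u|^{p+1}$ off the defining equation $G(u)=0$, and then close the argument with the Sobolev inequality used in the direction $\|u\|^2\gtrsim|u|_{p+1}^2$.

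First I would rewrite the constraint $G(u)=0$ for $u\in\mathcal{M}$ in the form
$$\frac{p+4}{p+1}\int_{\R^3}|u|^{p+1}=\frac{3}{2}a\int_{\R^3}|Du|^2+\frac{5}{2}\int_{\R^3}|u|^2+\frac{3}{2}b\left(\int_{\R^3}|Du|^2\right)^2.$$
Since $b\left(\int_{\R^3}|Du|^2\right)^2\geq0$ and $\tfrac52\geq\tfrac32$, the right-hand side is bounded below by $\frac{3}{2}\left(a\int_{\R^3}|Du|^2+\int_{\R^3}|u|^2\right)=\frac{3}{2}\|u\|^2$, where $\|u\|^2=\int_{\R^3}a|Du|^2+|u|^2$ is the norm used for the limit problem \eqref{1.16}. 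Hence every $u\in\mathcal{M}$ satisfies $\frac{p+4}{p+1}|u|_{p+1}^{p+1}\geq\frac{3}{2}\|u\|^2$; in particular $|u|_{p+1}>0$, since otherwise the displayed identity would force $\|u\|=0$, contradicting $u\neq0$.

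Next, because $2<p<5$ gives $2<p+1<6$, the continuous embedding $H^1(\R^3)\hookrightarrow L^{p+1}(\R^3)$ provides a constant $S>0$ with $S|u|_{p+1}^2\leq\|u\|^2$ for all $u\in H^1(\R^3)$. Combining this with the bound above yields $\frac{p+4}{p+1}|u|_{p+1}^{p+1}\geq\frac{3}{2}S|u|_{p+1}^2$, and dividing through by $|u|_{p+1}^2>0$ gives $|u|_{p+1}^{p-1}\geq\frac{3(p+1)S}{2(p+4)}$. Since $p-1>0$, I may take $(p-1)$-th roots and set $C=\left(\frac{3(p+1)S}{2(p+4)}\right)^{1/(p-1)}>0$, which is the desired uniform lower bound.

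This is essentially a one-line computation once the constraint is written in the right form, so I do not expect a genuine obstacle; the only points requiring a little care are keeping track of which nonnegative terms may be discarded when passing from the identity $G(u)=0$ to the inequality $\frac{p+4}{p+1}|u|_{p+1}^{p+1}\geq\frac{3}{2}\|u\|^2$, and invoking the Sobolev inequality in the direction $\|u\|^2\gtrsim|u|_{p+1}^2$ rather than the reverse.
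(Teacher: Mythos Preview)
Your proof is correct and follows essentially the same approach as the paper: both drop the nonnegative nonlocal term in $G(u)=0$ to obtain $\frac{p+4}{p+1}|u|_{p+1}^{p+1}\geq\frac{3}{2}\|u\|^2$, then apply the Sobolev embedding $\|u\|^2\geq C|u|_{p+1}^2$ and solve for $|u|_{p+1}$, arriving at the identical lower bound $\left(\frac{3C(p+1)}{2(p+4)}\right)^{1/(p-1)}$.
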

\begin{proof}~~
For any $u\in\mathcal{M}$, $G(u)=0$. Since $2<p<5$, by the Sobolev
embedding inequality, there exists $C>0$ such that
$$
\begin{array}{ll}
0&=\frac{3}{2}a\ds\int_{\R^3}|Du|^2+\frac{5}{2}\ds\int_{\R^3}|u|^2+
\frac{3}{2}b\left(\ds\int_{\R^3}|Du|^2\right)^2-\frac{p+4}{p+1}\ds\int_{\R^3}|u|^{p+1}\\[5mm]
&\geq\frac{3}{2}\|u\|^2-\frac{p+4}{p+1}\ds\int_{\R^3}|u|^{p+1}\\[5mm]
&\geq\frac{3}{2}C|u|_{p+1}^2-\frac{p+4}{p+1}|u|_{p+1}^{p+1}.
\end{array}$$
Then
$|u|_{p+1}\geq\left[\frac{3C(p+1)}{2(p+4)}\right]^{\frac{1}{p-1}}.$
\end{proof}
Set
$$c_1=\inf\limits_{\eta\in
\Gamma}\max\limits_{t\in[0,1]}I(\eta(t)),~~~~c_2=\inf\limits_{u\in
H^1(\R^3)\backslash\{0\}}\max\limits_{t>0}I(u_t),~~~~c_3=\inf\limits_{u\in
\mathcal{M}}I(u),$$where $u_t(x)=tu(t^{-1}x)$ and $$\Gamma=\{\eta\in
C([0,1],H^1(\R^3))|~\eta(0)=0,~I(\eta(1))\leq0,~\eta(1)\neq0\}.$$

\begin{lem}\label{lem2.8}\ \
$c\triangleq c_1=c_2=c_3>0.$
\end{lem}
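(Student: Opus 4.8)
The plan is to establish the chain of inequalities $c_3\le c_1\le c_2=c_3$, which at once gives $c=c_1=c_2=c_3$, and then to prove separately that $c_3>0$. Throughout write $u_t(x)=tu(t^{-1}x)$ and $\gamma_u(t)\triangleq I(u_t)$, the function already analyzed in Lemmas 2.2, 2.3 and 2.5.

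First I would dispose of the two easy links. For $c_2=c_3$: given $u\in H^1(\R^3)\backslash\{0\}$, Lemma 2.5 gives a unique $\tilde t>0$ with $u_{\tilde t}\in\mathcal{M}$ and $\max_{t>0}I(u_t)=I(u_{\tilde t})\ge c_3$, so $c_2\ge c_3$; conversely, if $u\in\mathcal{M}$ then $G(u_1)=G(u)=0$ forces $\tilde t=1$ by the uniqueness in Lemma 2.5, whence $\max_{t>0}I(u_t)=I(u)$ and $c_2\le c_3$. For $c_1\le c_2$: fix $u\neq0$; since $\|u_t\|^2=at^3\int_{\R^3}|Du|^2+t^5\int_{\R^3}|u|^2\to0$ as $t\to0^+$ and $I(u_t)\to-\infty$ as $t\to+\infty$ by Lemma 2.2, choosing $T$ large enough the path $\eta(s)=u_{sT}$ for $s\in(0,1]$, $\eta(0)=0$, belongs to $\Gamma$ and satisfies $\max_{s\in[0,1]}I(\eta(s))\le\max_{t>0}I(u_t)$; taking infima yields $c_1\le c_2$.

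The heart of the matter is $c_1\ge c_3$: every admissible path must meet $\mathcal{M}$. The key computation is $\gamma_u'(t)=t^{-1}G(u_t)$, obtained by differentiating $\gamma_u$ termwise; together with Lemma 2.3 (so $\gamma_u$ has a single, maximizing critical point $\tilde t$) this gives $G(u_t)>0$ for $0<t<\tilde t$ and $G(u_t)<0$ for $t>\tilde t$. Now let $\eta\in\Gamma$. By Step 1 of the proof of Lemma 2.6, $G(v)\ge\frac{3}{2}\|v\|^2-C\|v\|^{p+1}>0$ whenever $v\neq0$ and $\|v\|$ is small, so $G(\eta(s))>0$ for small $s>0$; and since $\eta(1)\neq0$ with $I(\eta(1))\le0$, while $\gamma_{\eta(1)}$ rises from the value $0$ to a strictly positive maximum before decreasing, the corresponding $\tilde t$ must be $<1$, i.e.\ $G(\eta(1))<0$. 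By continuity of $s\mapsto G(\eta(s))$ it has a zero in $(0,1)$; choosing $s_0$ to be the largest such zero, $G(\eta(s))<0$ on $(s_0,1]$, which is incompatible with $\eta(s)$ lying near the origin (where $G\ge0$), so $\eta(s_0)\neq0$ and $\eta(s_0)\in\mathcal{M}$. Hence $\max_{s\in[0,1]}I(\eta(s))\ge I(\eta(s_0))\ge c_3$, and taking the infimum over $\Gamma$ gives $c_1\ge c_3$. I expect this paragraph — especially pinning down $G(\eta(1))<0$ from the bare hypothesis $I(\eta(1))\le0$, where the precise shape of $\gamma_u$ (increasing then decreasing, positive maximum) is genuinely needed — to be the main obstacle.

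Finally, for $c_3>0$ it is not enough to invoke $I>0$ on $\mathcal{M}$ (Step 2 of Lemma 2.6); I would extract a uniform bound. For $u\in\mathcal{M}$, since $p>2$ the combination $I(u)=I(u)-\frac{1}{p+4}G(u)$ has all its coefficients positive, giving $I(u)\ge\frac{p-1}{2(p+4)}\|u\|^2$; on the other hand $G(u)=0$ and the Sobolev embedding $H^1(\R^3)\hookrightarrow L^{p+1}(\R^3)$ force $\frac{3}{2}\|u\|^2\le\frac{p+4}{p+1}|u|_{p+1}^{p+1}\le C\|u\|^{p+1}$, hence $\|u\|\ge\delta_0>0$ with $\delta_0$ independent of $u$ (this is essentially Lemma 2.7). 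Combining, $c_3\ge\frac{p-1}{2(p+4)}\delta_0^2>0$, and the chain of equalities proved above finishes Lemma 2.8.
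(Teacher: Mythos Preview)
Your proof is correct and follows the same route as the paper: $c_2=c_3$ via Lemma~2.5, $c_1\le c_2$ by using the scaling path $s\mapsto u_{sT}$, and $c_1\ge c_3$ by showing every $\eta\in\Gamma$ must cross $\mathcal{M}$. The only differences are cosmetic: to get $G(\eta(1))<0$ the paper uses the one-line algebraic inequality $6I(u)-G(u)\ge\tfrac{1}{2}\|u\|^2+\tfrac{p-2}{p+1}\int_{\R^3}|u|^{p+1}>0$ (so $G(u)\ge0\Rightarrow I(u)>0$) instead of your analysis of the shape of $\gamma_{\eta(1)}$, and it leaves $c_3>0$ implicit in Steps~1--2 of Lemma~2.6 rather than extracting your quantitative bound $I(u)\ge\tfrac{p-1}{2(p+4)}\|u\|^2$.
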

\begin{proof}~~The proof is similar to that of Proposition 3.11 in \cite{r}, where $\mathcal{M}$ was the Nehari
manifold. We give a detailed proof here for readers' convenience.

By Lemma 2.5, for each $u\in H^1(\R^3)\backslash\{0\}$, there exists
a unique $u_{\tilde{t}}\in \mathcal{M}$ such that
$$I(u_{\tilde{t}})=\max\limits_{t>0}I(u_t).$$
It follows that $c_2=c_3$.

For any $\eta\in \Gamma$, we claim that
$\eta([0,1])\cap\mathcal{M}\neq\varnothing$. Indeed, by \textbf{Step
1} in the proof of Lemma 2.6, we see that if $u\in
H^1(\R^3)\backslash\{0\}$ is interior to or on $\mathcal{M}$, then
$$\frac{3}{2}a\ds\int_{\R^3}|Du|^2+\frac{5}{2}\ds\int_{\R^3}|u|^2+
\frac{3}{2}b\left(\ds\int_{\R^3}|Du|^2\right)^2\geq\frac{p+4}{p+1}\ds\int_{\R^3}|u|^{p+1}$$
and
$$6I(u)> G(u)+\frac{1}{2}\|u\|^2+\frac{p-2}{p+1}\ds\int_{\R^3}|u|^{p+1}>0.$$
Hence $\eta$ crosses $\mathcal{M}$ since $\eta(0)=0$,
$I(\eta(1))\leq0$ and $\eta(1)\neq0$. Therefore
$$\max\limits_{t\in[0,1]}I(\eta(t))\geq \inf\limits_{u\in
\mathcal{M}}I(u)=c_3$$ and then $c_1\geq c_3$. On the other hand,
for $u\in H^1(\R^3)\backslash\{0\}$, by Lemma 2.2, $I(u_{t_0})<0$
for $t_0$ large enough. Set
$$\widetilde{\eta}(t)\triangleq \left\{%
\begin{array}{ll}
    u_{tt_0}, & \hbox{$t>0$}, \\
    0, & \hbox{$t=0$},\\
\end{array}%
\right.$$ then $\widetilde{\eta}\in \Gamma$. Hence
$$\max\limits_{t>0}I(u_{tt_0})\geq\max\limits_{t\in[0,1]}I(u_{tt_0})\geq\inf\limits_{\eta\in
\Gamma}\max\limits_{t\in[0,1]}I(\eta(t))=c_1.$$ So $c_2\geq c_1.$
\end{proof}
 By Lemma 2.6, Lemma 2.8 and Remark 2.4, if $u\in \mathcal{M}$ such that $I(u)=c$, then $u$ is a ground state solution to
\eqref{1.16}. So we look for critical points of $I$ restricted on
$\mathcal{M}$.

The following concentration-compactness principle is due to P. L.
Lions.
\begin{lem}\label{lem 2.9}(\cite{pl}, Lemma 1.1)\ \
Let $\{\rho_n\}$ be a sequence of nonnegative $L^1$ functions on
$\R^N$ satisfying $\ds\int_{\R^N}\rho_n=\lambda$, where $\lambda>0$
is fixed. There exists a subsequence, still denoted by $\{\rho_n\}$
satisfying one of the following three possibilities:

(i)~(Vanishing)~~ for all $R>0$, it holds
$$\lim\limits_{n\rightarrow+\infty}\sup\limits_{y\in
\R^N}\ds\int_{B_R(y)}\rho_n=0;$$

(ii)~(Compactness)~~ there exists $\{y_n\}\subset\R^N$ such that,
for any $\varepsilon>0,$ there exists an $R>0$ satisfying
$$\liminf_{n\rightarrow+\infty}\ds\int_{B_R(y_n)}\rho_n\geq
\lambda-\varepsilon;$$

(iii)~(Dichotomy)~~ there exists an $\alpha\in (0,\lambda)$ and
$\{y_n\}\subset\R^N$ such that for any $\varepsilon>0$, $\exists$
$R>0$, for all $r\geq R$ and $r^\prime\geq R$, it holds
$$\limsup_{n\rightarrow+\infty}\left|\alpha-\ds\int_{B_{r}(y_n)}\rho_n\right|+\left|(\lambda-\alpha)-\ds\int_{\R^N\backslash B_{r^\prime}(y_n)}\rho_n\right|<\varepsilon.$$
\end{lem}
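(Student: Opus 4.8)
The plan is to prove the trichotomy by the classical concentration function method of Lions. First I would attach to the sequence $\{\rho_n\}$ its concentration functions
$$Q_n(t)=\sup_{y\in\R^N}\int_{B_t(y)}\rho_n,\qquad t\geq0.$$
For each fixed $n$, $Q_n$ is nonnegative, nondecreasing in $t$, bounded above by $\lambda=\int_{\R^N}\rho_n$, and satisfies $Q_n(t)\to\lambda$ as $t\to+\infty$. Since $\{Q_n\}$ is a uniformly bounded sequence of monotone functions, Helly's selection theorem provides a subsequence (not relabeled) and a nondecreasing function $Q:[0,+\infty)\to[0,\lambda]$ with $Q_n(t)\to Q(t)$ at every continuity point of $Q$, hence for all but at most countably many $t$. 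I would then set
$$\alpha=\lim_{t\to+\infty}Q(t)\in[0,\lambda],$$
and show that the three cases $\alpha=0$, $\alpha=\lambda$, and $0<\alpha<\lambda$ correspond respectively to vanishing, compactness and dichotomy. Since these cases are exhaustive, this proves the lemma.

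The case $\alpha=0$ is immediate: then $Q\equiv0$, so $Q_n(R)\to0$ for every fixed $R>0$, which is exactly the vanishing alternative (i). For the case $\alpha=\lambda$ I would argue that, given $\varepsilon>0$, one can pick $R>0$ with $Q(R)>\lambda-\varepsilon$; as $Q_n(R)\to Q(R)$, for large $n$ there is a center $y_n$ with $\int_{B_R(y_n)}\rho_n>\lambda-\varepsilon$. The only real point here is to select one sequence $\{y_n\}$ valid for all $\varepsilon$ simultaneously. For this I would use the elementary overlap fact that two balls each carrying more than half of the total mass $\lambda$ must intersect, so the near-optimal centers at finer scales stay within a bounded distance of the centers at coarser scales; a diagonal choice over $\varepsilon=1/k$ then fixes one sequence $\{y_n\}$ yielding (ii).

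The genuinely delicate case, and the main obstacle, is dichotomy $0<\alpha<\lambda$. Here I would fix $\varepsilon>0$, choose $R_0$ with $\alpha-\varepsilon<Q(R_0)\leq\alpha$, and select centers $y_n$ with $\int_{B_{R_0}(y_n)}\rho_n>\alpha-\varepsilon$ for $n$ large. The crux is to manufacture radii $R_n\to+\infty$ for which the mass in the annulus $R_0\leq|x-y_n|\leq R_n$ tends to $0$; this is the precise sense in which the concentration cannot leak continuously from level $\alpha$ up to level $\lambda$ but must jump across a spatial gap. I would obtain such $R_n$ from the trapping inequalities $\int_{B_t(y_n)}\rho_n\leq Q_n(t)\to Q(t)\leq\alpha$, valid for each fixed $t$, combined with a diagonal extraction letting $R_n\to+\infty$ slowly enough that also $\int_{B_{R_n}(y_n)}\rho_n\to\alpha$. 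Once the annular mass vanishes, the inner ball keeps mass converging to $\alpha$ while the exterior $\R^N\setminus B_{R_n}(y_n)$ keeps mass converging to $\lambda-\alpha$, and the uniformity over all $r,r'\geq R$ required in (iii) follows from the monotonicity of $t\mapsto\int_{B_t(y_n)}\rho_n$ together with the bound $Q(t)\leq\alpha$. The careful coordination of the two scales $R_0$ and $R_n$ through the diagonal argument is where the proof demands the most care; the rest of the argument is soft.
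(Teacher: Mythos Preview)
The paper does not prove this lemma at all: it is stated as Lemma~2.9 with the attribution ``(\cite{pl}, Lemma 1.1)'' and no proof is given, the next item in the paper being Lemma~2.10. Your outline is the classical concentration-function argument from Lions' original paper, and it is correct; so there is no discrepancy to report beyond the fact that you have supplied a proof where the authors simply cite one.
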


\begin{lem}\label{lem2.10} (\cite{wi}, Lemma
1.21)\ \ Let $r>0$ and $2\leq q<2^*$. If $\{u_n\}$ is bounded in
$H^1(\R^N)$ and
$$\sup\limits_{y\in\R^N}\ds\int_{B_r(y)}|u_n|^q\rightarrow0,~~n\rightarrow+\infty,$$
then $u_n\rightarrow0$ in $L^s(\R^N)$ for $2<s<2^*$.
\end{lem}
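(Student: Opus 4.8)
This is a well-known lemma (the vanishing alternative of the concentration–compactness method), so the plan is simply to reproduce its standard proof. The strategy is to localize the $L^s$-norm of $u_n$ on balls of radius $r$, interpolate there between $L^2$ and $L^{2^*}$, dominate the $L^{2^*}$-factor by the full $H^1$-norm on the ball via the Sobolev inequality, and then sum over a covering of $\R^N$ by balls of uniformly bounded overlap.

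First I would reduce the hypothesis to the exponent $2$: since each ball $B_r(y)$ has finite measure, Hölder's inequality gives $\int_{B_r(y)}|u_n|^2\le|B_r(0)|^{\,1-2/q}\big(\int_{B_r(y)}|u_n|^q\big)^{2/q}$ for all $y$, so that $\varepsilon_n:=\sup_{y\in\R^N}\int_{B_r(y)}|u_n|^2\to 0$. Next I would fix $s\in(2,2^*)$ and let $t\in(0,1)$ be defined by $\frac1s=\frac{1-t}{2}+\frac{t}{2^*}$. On a fixed ball, the interpolation inequality for $L^p$-norms together with the Sobolev embedding $H^1(B_r(y))\hookrightarrow L^{2^*}(B_r(y))$ (whose constant $S$ is independent of $y$ by translation invariance) yields
$$\int_{B_r(y)}|u_n|^{s}\;\le\;S^{\,ts}\Big(\int_{B_r(y)}|u_n|^{2}\Big)^{\mu}\Big(\int_{B_r(y)}(|Du_n|^2+|u_n|^2)\Big)^{\beta},\qquad \mu=\tfrac{(1-t)s}{2},\ \ \beta=\tfrac{ts}{2},$$
where crucially $\mu+\beta=\tfrac s2>1$ since $s>2$. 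Covering $\R^N$ by balls $\{B_r(y_i)\}$ so that every point of $\R^N$ belongs to at most $k=k(N)$ of them, and setting $b_i=\int_{B_r(y_i)}|u_n|^2$, $a_i=\int_{B_r(y_i)}(|Du_n|^2+|u_n|^2)$, summing the above over $i$ gives $|u_n|_s^s\le S^{ts}\sum_i b_i^{\mu}a_i^{\beta}$, where $b_i\le\varepsilon_n$, $\sup_i a_i\le\|u_n\|_{H^1(\R^N)}^2\le M^2$, $\sum_i a_i\le kM^2$ and $\sum_i b_i\le k|u_n|_2^2\le kM^2$, with $M:=\sup_n\|u_n\|_{H^1(\R^N)}<\infty$.

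The one point requiring care — and hence what I would regard as the main (though minor) obstacle — is the estimation of $\sum_i b_i^{\mu}a_i^{\beta}$, because $\beta$ need not be $\le 1$. If $\beta\ge 1$ one writes $a_i^{\beta}\le(\sup_i a_i)^{\beta-1}a_i\le M^{2(\beta-1)}a_i$ and pulls $\varepsilon_n^{\mu}$ out of $\sum_i b_i^{\mu}a_i$, getting $\sum_i b_i^{\mu}a_i^{\beta}\le kM^{2\beta}\varepsilon_n^{\mu}$. If $\beta<1$ one first applies Hölder in the summation index $i$ with exponents $\frac1{1-\beta}$ and $\frac1\beta$, reducing to $\big(\sum_i b_i^{\mu/(1-\beta)}\big)^{1-\beta}\big(\sum_i a_i\big)^{\beta}$; since $\mu+\beta>1$ forces $\mu/(1-\beta)>1$, one has $\sum_i b_i^{\mu/(1-\beta)}\le\varepsilon_n^{\mu/(1-\beta)-1}\sum_i b_i$, and altogether $\sum_i b_i^{\mu}a_i^{\beta}\le C\varepsilon_n^{\,\mu+\beta-1}$. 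In either case the exponent of $\varepsilon_n$ is strictly positive, so $|u_n|_s^s\to 0$, i.e. $u_n\to 0$ in $L^s(\R^N)$, and since $s\in(2,2^*)$ is arbitrary the proof is complete. Note that $q\ge 2$ enters only in the first Hölder step, while $s>2$ is used precisely to guarantee $\mu+\beta>1$, hence to make the extracted power of $\varepsilon_n$ positive.
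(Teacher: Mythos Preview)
The paper does not prove this lemma at all; it merely quotes it from Willem~\cite{wi} (Lemma~1.21) and uses it as a black box. So there is no ``paper's own proof'' to compare against. Your argument is correct and is essentially the classical Lions--Willem proof: localize on balls, interpolate between $L^2$ and $L^{2^*}$, control the $L^{2^*}$-factor by the local $H^1$-norm via Sobolev, and sum over a bounded-overlap covering, using $\mu+\beta=s/2>1$ to extract a positive power of $\varepsilon_n$. Your preliminary H\"older reduction from $q$ to $2$ is a harmless convenience; Willem's presentation interpolates directly between $L^q$ and $L^{2^*}$ and first obtains the conclusion for a single $s\in(q,2^*)$ chosen so that the $H^1$-exponent is exactly $1$ (making the summation immediate), then recovers the full range $2<s<2^*$ by a further interpolation with the bounded $L^2$- and $L^{2^*}$-norms. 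Your case split $\beta\ge1$ versus $\beta<1$ handles all $s$ at once at the cost of a slightly more delicate summation estimate; both routes are standard and equivalent in strength.
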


\begin{lem}\label{lem2.11}\ \
Let $\{u_n\}\subset \mathcal{M}$ be a minimizing sequence for $c$,
which was given in Lemma 2.8. Then there exists $\{y_n\}\subset\R^3$
such that for any $\varepsilon>0$, there exists an $R>0$ satisfying
$$\ds\int_{\R^3\backslash B_R(y_n)}a|Du_n|^2+u_n^2\leq
\varepsilon.$$
\end{lem}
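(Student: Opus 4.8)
The plan is to apply Lions' concentration--compactness principle (Lemma 2.9) to the densities $\rho_n:=a|Du_n|^2+u_n^2$: once $\{u_n\}$ is seen to be bounded and $\int_{\R^3}\rho_n\to\lambda$ for some $\lambda>0$, it is enough to rule out the vanishing and dichotomy alternatives, since the surviving compactness alternative is precisely the assertion. The bounds follow from the purely algebraic identity
$$I(u)-\tfrac16G(u)=\frac a4\int_{\R^3}|Du|^2+\frac1{12}\int_{\R^3}|u|^2+\frac{p-2}{6(p+1)}\int_{\R^3}|u|^{p+1}=:J(u),$$
valid for all $u\in H^1(\R^3)$, whose three coefficients are positive because $p>2$; note that $J$ carries no nonlocal term and $J(u)\ge\tfrac1{12}\|u\|^2$. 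Since $u_n\in\mathcal M$ we have $G(u_n)=0$, hence $J(u_n)=I(u_n)\to c$, so $\int|Du_n|^2,\int u_n^2,\int|u_n|^{p+1}$ are bounded and, along a subsequence, $\int_{\R^3}\rho_n=\|u_n\|^2\to\lambda\ge0$; moreover $\lambda>0$, since Lemma 2.7 gives $|u_n|_{p+1}\ge C_0>0$ and Sobolev then forces a positive lower bound for $\|u_n\|^2$. Vanishing is impossible: if $\sup_{y}\int_{B_R(y)}\rho_n\to0$ for every $R>0$, then a fortiori $\sup_{y}\int_{B_R(y)}u_n^2\to0$, so Lemma 2.10 gives $u_n\to0$ in $L^{p+1}(\R^3)$ (since $2<p+1<6$), contradicting Lemma 2.7.

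It remains to exclude dichotomy; I would do this by showing the sequence is relatively compact modulo translations. Since vanishing fails, there are $R_0,\delta_0>0$ and, along a subsequence, $\{y_n\}\subset\R^3$ with $\int_{B_{R_0}(y_n)}u_n^2\ge\delta_0$; set $\tilde u_n:=u_n(\cdot+y_n)$, so $\{\tilde u_n\}$ is bounded in $H^1(\R^3)$ and, up to a subsequence, $\tilde u_n\rightharpoonup u_0$ with $u_0\neq0$ by the local compact embedding. Put $r_n:=\tilde u_n-u_0\rightharpoonup0$. By the Brezis--Lieb lemma each of $\int|D\tilde u_n|^2,\int\tilde u_n^2,\int|\tilde u_n|^{p+1}$ equals the corresponding $u_0$-integral plus the $r_n$-integral plus $o(1)$, and since $J$ has no nonlocal term this gives $J(\tilde u_n)=J(u_0)+J(r_n)+o(1)$; as $J(\tilde u_n)=I(u_n)\to c$ we obtain $c=J(u_0)+\ell$ with $\ell:=\lim_nJ(r_n)\ge0$ and $J(u_0)\ge\tfrac1{12}\|u_0\|^2>0$. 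Expanding $\bigl(\int|D\tilde u_n|^2\bigr)^2$ and discarding the nonnegative cross term $3b\bigl(\int|Du_0|^2\bigr)\bigl(\int|Dr_n|^2\bigr)$, the identity $G(\tilde u_n)=G(u_n)=0$ yields $G(u_0)+G(r_n)\le o(1)$.

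Suppose, for contradiction, that $r_n\not\to0$, so $\|r_n\|\ge\theta>0$ along a further subsequence. By Lemma 2.5 there are $t_0>0$ and $\tau_n>0$ with $(u_0)_{t_0},(r_n)_{\tau_n}\in\mathcal M$, so $J\bigl((u_0)_{t_0}\bigr)=I\bigl((u_0)_{t_0}\bigr)\ge c$ and $J\bigl((r_n)_{\tau_n}\bigr)\ge c$; moreover, writing $\gamma_v(t)=I(v_t)$ one has $G(v_t)=t\,\gamma_v'(t)$ while $\gamma_v$ has a unique maximum point $t_v$ (Lemma 2.3), so $G(v)\le0\Rightarrow t_v\le1$ and $G(v)>0\Rightarrow t_v>1$. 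If $G(u_0)\le0$: then $t_0\le1$, and since $J(v_t)=\tfrac a4 t^3\!\int|Dv|^2+\tfrac1{12}t^5\!\int|v|^2+\tfrac{p-2}{6(p+1)}t^{p+4}\!\int|v|^{p+1}$ has nonnegative coefficients, $J(u_0)\ge J\bigl((u_0)_{t_0}\bigr)\ge c$, whence $\ell=c-J(u_0)\le0$, so $\ell=0$ and $\|r_n\|^2\le12J(r_n)\to0$, a contradiction. If $G(u_0)>0$: then $G(r_n)\le-G(u_0)+o(1)<0$ for large $n$, so $\tau_n<1$ and $J(r_n)\ge J\bigl((r_n)_{\tau_n}\bigr)\ge c$, giving $\ell\ge c$ and $c=J(u_0)+\ell\ge J(u_0)+c$, i.e. $J(u_0)\le0$, contradicting $u_0\neq0$. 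Hence $r_n\to0$, i.e. $\tilde u_n\to u_0$ in $H^1(\R^3)$. Then, using $a|D\tilde u_n|^2+\tilde u_n^2\le2(a|Du_0|^2+u_0^2)+2(a|Dr_n|^2+r_n^2)$, for any $\varepsilon>0$ we pick $R$ with $2\int_{\R^3\backslash B_R(0)}(a|Du_0|^2+u_0^2)<\varepsilon$ and $n$ large with $2\|r_n\|^2<\varepsilon$ (enlarging $R$ to absorb the remaining finitely many indices) to get
$$\int_{\R^3\backslash B_R(y_n)}\bigl(a|Du_n|^2+u_n^2\bigr)=\int_{\R^3\backslash B_R(0)}\bigl(a|D\tilde u_n|^2+\tilde u_n^2\bigr)\le\varepsilon,$$
which is the claim; equivalently, dichotomy cannot occur.

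I expect the dichotomy step to be the real work. Two points need care: the nonlocal term $b\bigl(\int|Du|^2\bigr)^2$ is \emph{not} additive under the Brezis--Lieb splitting and is handled only via $(s+t)^2\ge s^2+t^2$ for $s,t\ge0$ (which is exactly what delivers $G(u_0)+G(r_n)\le o(1)$); and comparing $J(u_0)$ and $J(r_n)$ with the minimum level $c$ hinges on the sign analysis of $G(u_0)$ through $G(v_t)=t\gamma_v'(t)$ together with the monotone scaling of $J$ along $t\mapsto v_t$, i.e. on Lemmas 2.3 and 2.5. The $H^1$-boundedness, positivity of $\lambda$, and exclusion of vanishing are routine.
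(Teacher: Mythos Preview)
Your argument is correct. Both you and the paper start from the same algebraic identity $I(u)-\tfrac16 G(u)=\Phi(u)$ (your $J$ is exactly the paper's $\Phi$), use it to get boundedness and rule out vanishing via Lemma~2.10, and then exploit the monotonicity of $t\mapsto \Phi(v_t)$ together with the sign information on $G$ to compare pieces against the level $c$. The difference is in how the ``two pieces'' are produced. The paper works inside Lions' dichotomy alternative and splits with cut--off functions into $v_n$ (inner) and $w_n$ (outer), obtaining $G(v_n)+G(w_n)\le o(1)$ from the subadditivity of the nonlocal square, and then runs the case analysis on the signs of $G(v_n),G(w_n)$. You bypass the cut--off machinery entirely: after excluding vanishing you extract a nontrivial translated weak limit $u_0$, use Brezis--Lieb to write $\tilde u_n=u_0+r_n$ with additive splitting of all local terms, and run the case analysis on the sign of $G(u_0)$. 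Your route in fact proves the stronger statement $\tilde u_n\to u_0$ in $H^1(\R^3)$, which is exactly what the paper later extracts from Lemma~2.11 in the proof of Theorem~1.4; so your argument merges those two steps. One small wording issue: the failure of vanishing for $\rho_n=a|Du_n|^2+u_n^2$ does not by itself give $\int_{B_{R_0}(y_n)}u_n^2\ge\delta_0$; but your own vanishing computation already shows that vanishing of the $L^2$--density $u_n^2$ alone leads to the same contradiction with Lemma~2.7, so simply invoke that to justify the claim.
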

\begin{proof}~~
Suppose that $\{u_n\}\subset \mathcal{M}$ satisfying
\begin{equation}\label{2.8}
\lim\limits_{n\rightarrow+\infty}I(u_n)=c>0.
\end{equation}
We introduce a new functional $\Phi:H^1(\R^3)\rightarrow\R$ as
follows:
\begin{equation}\label{2.9}
\Phi(u)=\ds\int_{\R^3}\frac{a}{4}|Du|^2+\frac{1}{12}u^2+\frac{p-2}{6(p+1)}|u|^{p+1}.
\end{equation}
For $\forall~u\in \mathcal{M}$, we have that $I(u)=\Phi(u)\geq0$ and
then $\lim\limits_{n\rightarrow+\infty}\Phi(u_n)=c$. Hence $\{u_n\}$
is bounded in $H^1(\R^3)$. Up to a subsequence, we may assume that
there exists a $u\in H^1(\R^3)$ such that
\begin{equation}\label{2.10}
\left\{%
\begin{array}{ll}
u_n\rightharpoonup u,~~~~& \hbox{$ \,\ \hbox{in}~~H^1(\R^3)$},\\
u_n\rightarrow u,~~~~& \hbox{$ \,\
\hbox{in}~~L^s_{loc}(\R^3),~~\forall~s\in[1,6)$}.
\end{array}%
\right.
\end{equation}
To prove this theorem, we apply the concentration-compactness
principle Lemma 2.9. Set
\begin{equation}\label{2.11}
\rho_n\triangleq\frac{a}{4}|Du_n|^2+\frac{1}{12}u_n^2+\frac{p-2}{6(p+1)}|u_n|^{p+1},
\end{equation}
 then $\{\rho_n\}$ is a sequence of nonnegative $L^1$ functions on
$\R^3$ satisfying
$$\ds\int_{\R^3}\rho_n=\Phi(u_n)\rightarrow c>0.$$
By Lemma 2.9, there are three possibilities:

\noindent Vanishing:~~ for all $R>0$, it holds
$$\lim\limits_{n\rightarrow+\infty}\sup\limits_{y\in
\R^3}\ds\int_{B_R(y)}\rho_n=0;$$

\noindent Compactness:~~ there exists $\{y_n\}\subset\R^3$ such that
for any $\varepsilon>0,$ there exists an $R>0$ satisfying
$$\liminf_{n\rightarrow+\infty}\ds\int_{B_R(y_n)}\rho_n\geq
c-\varepsilon;$$

\noindent Dichotomy:~~ there exists an $\alpha\in (0,c)$ and
$\{y_n\}\subset\R^3$ such that for all $\varepsilon>0$, $\exists$
$R>0$ satisfying
$$\limsup_{n\rightarrow+\infty}\left|\alpha-\ds\int_{B_{R}(y_n)}\rho_n\right|+\left|(c-\alpha)-\ds\int_{\R^3\backslash B_{2R}(y_n)}\rho_n\right|<\varepsilon.$$
Now we claim that compactness holds for the sequence $\{\rho_n\}$
defined in \eqref{2.11}.

\noindent(i)~~~Vanishing does not occur.

Suppose by contradiction, for all
$R>0$,$$\lim\limits_{n\rightarrow+\infty}\sup\limits_{y\in
\R^3}\ds\int_{B_R(y)}\rho_n=0,$$
then$$\lim\limits_{n\rightarrow+\infty}\sup\limits_{y\in
\R^3}\ds\int_{B_R(y)}u_n^2=0.$$By Lemma 2.10, we have that
$u_n\rightarrow0$ in $L^s(\R^3)$ for $2<s<6$. Hence by
$\{u_n\}\subset\mathcal{M}$ and $2<p<5$, we have that
$$
\begin{array}{ll}
0<c\leq
I(u_n)&=\frac{1}{2}a\ds\int_{\R^3}|Du_n|^2+\frac{1}{2}\ds\int_{\R^3}|u_n|^2+\frac{1}{4}b\left(\ds\int_{\R^3}|Du_n|^2\right)^2-\frac{1}{p+1}\ds\int_{\R^3}|u_n|^{p+1}\\[5mm]
&\leq\frac{3}{2}a\ds\int_{\R^3}|Du_n|^2+\frac{1}{2}\ds\int_{\R^3}|u_n|^2+\frac{1}{4}b\left(\ds\int_{\R^3}|Du_n|^2\right)^2-\frac{1}{p+1}\ds\int_{\R^3}|u_n|^{p+1}\\[5mm]
&=-2\ds\int_{\R^3}|u_n|^2-\frac{5}{4}b\left(\ds\int_{\R^3}|Du_n|^2\right)^2+\frac{p+3}{p+1}\ds\int_{\R^3}|u_n|^{p+1}\\[5mm]
&<\frac{p+3}{p+1}\ds\int_{\R^3}|u_n|^{p+1}\rightarrow0,
\end{array}
$$ which is impossible.

\noindent(ii)~~~Dichotomy does not occur.

Suppose by contradiction that there exists an $\alpha\in (0,c)$ and
$\{y_n\}\subset\R^3$ such that for all $\varepsilon_n\rightarrow0$,
$\exists$ $\{R_n\}\subset\R_+$ with $R_n\rightarrow+\infty$
satisfying
\begin{equation}\label{2.12}
\limsup_{n\rightarrow+\infty}\left|\alpha-\ds\int_{B_{R_n}(y_n)}\rho_n\right|+\left|(c-\alpha)-\ds\int_{\R^3\backslash
B_{2R_n}(y_n)}\rho_n\right|<\varepsilon_n.
\end{equation}
 Let $\xi:\R_+\rightarrow\R_+$ be a cut-off
function such that $0\leq\xi\leq1$, $\xi(s)\equiv1$ for $s\leq1$,
$\xi\equiv0$ for $s\geq2$ and $|\xi^\prime(s)|\leq 2.$ Set
$$v_n(x):=\xi\left(\frac{|x-y_n|}{R_n}\right)u_n(x),~~~~~~w_n(x)=\left(1-\xi\left(\frac{|x-y_n|}{R_n}\right)\right)u_n(x).$$
Then by \eqref{2.12}, we see that
$\liminf\limits_{n\rightarrow+\infty}\Phi(v_n)\geq\alpha.$
Similarly, $\liminf\limits_{n\rightarrow+\infty}\Phi(w_n)\geq
c-\alpha.$ Denote $\Omega_n:=B_{2R_n}(y_n)\backslash B_{R_n}(y_n)$,
then
$$\ds\int_{\Omega_n}\frac{a}{4}|Du_n|^2+\frac{1}{12}u_n^2+\frac{p-2}{6(p+1)}|u_n|^{p+1}=\ds\int_{\Omega_n}\rho_n\rightarrow0$$
as $n\rightarrow+\infty.$ Therefore,
$$\ds\int_{\Omega_n}a|Du_n|^2+u_n^2\rightarrow0~~~~~\hbox{and}~~~~
\ds\int_{\Omega_n}|u_n|^{p+1}\rightarrow0$$ as
$n\rightarrow+\infty$. By direct computations, we have that
$$\ds\int_{\Omega_n}a|Dv_n|^2+v_n^2\rightarrow0~~~~~\hbox{and}~~~~
\ds\int_{\Omega_n}a|Dw_n|^2+w_n^2\rightarrow0$$ as
$n\rightarrow+\infty$. Hence, we conclude that
\begin{equation}\label{2.13}
a\ds\int_{\R^3}|Du_n|^2=a\ds\int_{\R^3}|Dv_n|^2+a\ds\int_{\R^3}|Dw_n|^2+o_n(1),~~\ds\int_{\R^3}u_n^2=\ds\int_{\R^3}v_n^2+\ds\int_{\R^3}w_n^2+o_n(1),
\end{equation}
\begin{equation}\label{2.14}
\ds\int_{\R^3}|u_n|^{p+1}=\ds\int_{\R^3}|v_n|^{p+1}+\ds\int_{\R^3}|w_n|^{p+1}+o_n(1),
\end{equation}
where $o_n(1)\rightarrow0$ as $n\rightarrow+\infty$. Moreover,
\begin{equation}\label{2.15}
\begin{array}{ll}
\left(\ds\int_{\R^3}|Du_n|^2\right)^2&=\left(\ds\int_{\R^3}|Dv_n|^2+\ds\int_{\R^3}|Dw_n|^2+o_n(1)\right)^2\\[5mm]
&=\left(\ds\int_{\R^3}|Dv_n|^2\right)^2+\left(\ds\int_{\R^3}|Dw_n|^2\right)^2+2\ds\int_{\R^3}|Dv_n|^2\ds\int_{\R^3}|Dw_n|^2+o_n(1)\\[5mm]
&\geq\left(\ds\int_{\R^3}|Dv_n|^2\right)^2+\left(\ds\int_{\R^3}|Dw_n|^2\right)^2+o_n(1).
\end{array}
\end{equation}
Hence, by \eqref{2.13}\eqref{2.14}, we see that
$$\Phi(u_n)=\Phi(v_n)+\Phi(w_n)+o_n(1).$$
Then
$$c=\lim\limits_{n\rightarrow+\infty}\Phi(u_n)\geq\liminf\limits_{n\rightarrow+\infty}\Phi(v_n)+\liminf\limits_{n\rightarrow+\infty}\Phi(w_n)\geq
\alpha+c-\alpha=c,$$ hence
\begin{equation}\label{2.16}
\lim\limits_{n\rightarrow+\infty}\Phi(v_n)=\alpha,~~~~\lim\limits_{n\rightarrow+\infty}\Phi(w_n)=c-\alpha.
\end{equation}
Since $u_n\in \mathcal{M}$, $G(u_n)=0$. By
\eqref{2.13}-\eqref{2.15}, we have that
\begin{equation}\label{2.17}
0=G(u_n)\geq G(v_n)+G(w_n)+o_n(1).
\end{equation}
We have to discuss the following two cases:

\noindent Case 1.~~ Up to a subsequence, we may assume that
$G(v_n)\leq0$ or $G(w_n)\leq0$.

Without loss of generality, we suppose that $G(v_n)\leq0$, then
\begin{equation}\label{2.18}
\frac{3}{2}a\ds\int_{\R^3}|Dv_n|^2+\frac{5}{2}\ds\int_{\R^3}|v_n|^2+
\frac{3}{2}b\left(\ds\int_{\R^3}|Dv_n|^2\right)^2-\frac{p+4}{p+1}\ds\int_{\R^3}|v_n|^{p+1}\leq0.
\end{equation}
By Lemma 2.5, for any $n$, there exists $t_n>0$ such that $
(v_n)_{t_n}\in \mathcal{M}$ and then $G((v_n)_{t_n})=0$, i.e.
\begin{equation}\label{2.19}
\frac{3}{2}at_n^3\ds\int_{\R^3}|Dv_n|^2+\frac{5}{2}t_n^5\ds\int_{\R^3}|v_n|^2+
\frac{3}{2}bt_n^6\left(\ds\int_{\R^3}|Dv_n|^2\right)^2-\frac{p+4}{p+1}t_n^{p+4}\ds\int_{\R^3}|v_n|^{p+1}=0.
\end{equation}
By \eqref{2.18} and \eqref{2.19}, we have that
$$
\frac{3}{2}a(t_n^{p+1}-1)\ds\int_{\R^3}|Dv_n|^2+\frac{5}{2}(t_n^{p+1}-t_n^2)\ds\int_{\R^3}|v_n|^2+
\frac{3}{2}b(t_n^{p+1}-t_n^3)\left(\ds\int_{\R^3}|Dv_n|^2\right)^2\leq0,
$$
which implies that $t_n\leq1$. Then
\begin{equation}\label{2.20}
c\leq I((v_n)_{t_n})=\Phi((v_n)_{t_n})\leq
\Phi(v_n)\rightarrow\alpha<c,
\end{equation} which is a contradiction.

\noindent Case 2.~~ Up to a subsequence, we may assume that
$G(v_n)>0$ and $G(w_n)>0$.

By \eqref{2.17}, we see that $G(v_n)\rightarrow0$ and
$G(w_n)\rightarrow0$ as $n\rightarrow+\infty$. For $t_n$ given in
Case 1, if $\limsup\limits_{n\rightarrow+\infty}t_n\leq1$, then we
can get the same contradiction as \eqref{2.20}. Suppose now that
$\lim\limits_{n\rightarrow+\infty}t_n=t_0>1$, by \eqref{2.19}, we
have that
$$
\begin{array}{ll}
G(v_n)&=\frac{3}{2}a\ds\int_{\R^3}|Dv_n|^2+\frac{5}{2}\ds\int_{\R^3}|v_n|^2+
\frac{3}{2}b\left(\ds\int_{\R^3}|Dv_n|^2\right)^2-\frac{p+4}{p+1}\ds\int_{\R^3}|v_n|^{p+1}\\[5mm]
&=\frac{3}{2}a\left(1-\frac{1}{t_n^{p+1}}\right)\ds\int_{\R^3}|Dv_n|^2+\frac{5}{2}\left(1-\frac{1}{t_n^{p-1}}\right)\ds\int_{\R^3}|v_n|^2+
\frac{3}{2}b\left(1-\frac{1}{t_n^{p-2}}\right)\left(\ds\int_{\R^3}|Dv_n|^2\right)^2.
\end{array}
$$
Then $v_n\rightarrow0$ in $H^1(\R^3)$ since $G(v_n)\rightarrow0$,
which contradicts to \eqref{2.16} since $\alpha>0$. So dichotomy
does not occur.

Therefore, compactness holds for the sequence $\{\rho_n\},$ i.e.
there exists $\{y_n\}\subset\R^3$ such that for any $\varepsilon>0,$
there exists an $R>0$ satisfying
$$\liminf_{n\rightarrow+\infty}\ds\int_{B_R(y_n)}\frac{a}{4}|Du_n|^2+\frac{1}{12}u_n^2+\frac{p-2}{6(p+1)}|u_n|^{p+1}\geq
c-\varepsilon.$$ Hence we deduce from
$\lim\limits_{n\rightarrow+\infty}\Phi(u_n)=c$ that
$\ds\int_{\R^3\backslash B_R(y_n)}a|Du_n|^2+u_n^2\leq \varepsilon. $
\end{proof}

\begin{lem}\label{lem2.12}( \cite{wi}, Lemma 1.32)\ \
Let $\Omega$ be an open subset of $\R^N$  and let $\{u_n\}\subset L^p(\Omega),$ $1\leq p<\infty$. If $\{u_n\}$ is bounded in $L^p(\Omega)$ and $u_n\rightarrow u$ a.e. on $\Omega$, then
$$\lim\limits_{n\rightarrow\infty}(|u_n|_p^p-|u_n-u|_p^p)=|u|_p^p.$$
\end{lem}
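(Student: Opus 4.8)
The plan is to reduce the statement to the classical Brezis--Lieb argument: an application of the dominated convergence theorem following a single elementary pointwise inequality. First I would record the elementary fact that for every $\varepsilon>0$ there is a constant $C_\varepsilon>0$ with
$$\bigl|\,|s+t|^p-|s|^p\,\bigr|\le \varepsilon|s|^p+C_\varepsilon|t|^p\qquad\text{for all }s,t\in\R.$$
This is obtained by splitting into the two regions $|t|\le\delta|s|$ and $|t|>\delta|s|$ for a small $\delta=\delta(\varepsilon)>0$: on the first region the mean value theorem gives $\bigl|\,|s+t|^p-|s|^p\,\bigr|\le p(|s|+|t|)^{p-1}|t|\le C\delta\,|s|^p$, while on the second region one uses $|s|^p\le\delta^{-p}|t|^p$ together with $|s+t|^p\le 2^{p-1}(|s|^p+|t|^p)$.

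Next, apply this inequality pointwise a.e. on $\Omega$ with $s=u_n(x)-u(x)$ and $t=u(x)$, which yields
$$\bigl|\,|u_n|^p-|u_n-u|^p\,\bigr|\le \varepsilon|u_n-u|^p+C_\varepsilon|u|^p\qquad\text{a.e. on }\Omega.$$
Observe first that $u\in L^p(\Omega)$: indeed $\{u_n\}$ is bounded in $L^p(\Omega)$ and $u_n\to u$ a.e., so Fatou's lemma gives $\int_\Omega|u|^p\le\liminf_n\int_\Omega|u_n|^p<\infty$. Now set
$$g_n^\varepsilon:=\Bigl(\,\bigl|\,|u_n|^p-|u_n-u|^p-|u|^p\,\bigr|-\varepsilon|u_n-u|^p\Bigr)^+.$$
From the inequality above, $0\le g_n^\varepsilon\le(C_\varepsilon+1)|u|^p\in L^1(\Omega)$, and since $u_n\to u$ a.e. we have $|u_n|^p-|u_n-u|^p-|u|^p\to 0$ a.e., hence $g_n^\varepsilon\to 0$ a.e. on $\Omega$. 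The dominated convergence theorem then gives $\int_\Omega g_n^\varepsilon\to 0$.

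Finally, from $\bigl|\,|u_n|^p-|u_n-u|^p-|u|^p\,\bigr|\le g_n^\varepsilon+\varepsilon|u_n-u|^p$ and the uniform bound $\int_\Omega|u_n-u|^p\le M$ (again from boundedness of $\{u_n\}$ in $L^p(\Omega)$ and $u\in L^p(\Omega)$), integration and letting $n\to\infty$ give
$$\limsup_{n\to\infty}\int_\Omega\bigl|\,|u_n|^p-|u_n-u|^p-|u|^p\,\bigr|\le \varepsilon M.$$
Since $\varepsilon>0$ is arbitrary, the left-hand side vanishes, which in particular implies $\lim_{n\to\infty}\bigl(|u_n|_p^p-|u_n-u|_p^p\bigr)=|u|_p^p$. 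The statement being the standard Brezis--Lieb lemma, there is no genuine obstacle; the only points requiring a little care are the verification of the elementary inequality and the identification of the $L^1$-dominating function, for which the boundedness hypothesis on $\{u_n\}$ is used twice --- once via Fatou to place $u$ in $L^p(\Omega)$, and once to control $\varepsilon\int_\Omega|u_n-u|^p$ uniformly in $n$.
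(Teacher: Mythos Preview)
Your argument is the standard Brezis--Lieb proof and is correct. The paper does not supply its own proof of this lemma but merely cites it from Willem's book (Lemma~1.32), where exactly this argument appears, so there is nothing further to compare.
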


\section{Proof of main results}
In this section, we prove our main results Theorem 1.4 and Theorem
1.1.\\

\noindent $\textbf{Proof of Theorem 1.4}$\,\,\

\begin{proof}~~
As described in $\S$ 1, to obtain Theorem 1.4, we need to prove that
problem \eqref{1.16} has a positive ground state solution in
$H^1(\R^3)$ for all $2<p<5$.

Let $\{u_n\}\subset \mathcal{M}$ be a minimizing sequence for $c$,
which was given in Lemma 2.8, then by Lemma 2.11, there exists
$\{y_n\}\subset\R^3$ such that for any $\varepsilon>0,$ there exists
an $R>0$ satisfying
\begin{equation}\label{3.1}
\ds\int_{\R^3\backslash B_R(y_n)}a|Du_n|^2+u_n^2\leq \varepsilon.
\end{equation}

Define $\tilde{u}_n(\cdot)=u_n(\cdot-y_n)\in H^1(\R^3)$, then
$\tilde{u}_n\in \mathcal{M}$. By \eqref{3.1}, we see that for any
$\varepsilon>0,$ there exists an $R>0$ such that
\begin{equation}\label{3.2}
\ds\int_{\R^3\backslash B_R(0)}a|D\tilde{u}_n|^2+\tilde{u}_n^2\leq
\varepsilon.\end{equation} Since $\{\tilde{u}_n\}$ is bounded in
$H^1(\R^3)$, up to a subsequence, we may assume that there exists a
$\tilde{u}\in H^1(\R^3)$ such that
\begin{equation}\label{3.3}
\left\{%
\begin{array}{ll}
\tilde{u}_n\rightharpoonup \tilde{u},~~~~& \hbox{$ \,\ \hbox{in}~~H^1(\R^3)$},\\
\tilde{u}_n\rightarrow \tilde{u},~~~~& \hbox{$ \,\
\hbox{in}~~L^s_{loc}(\R^3),~~\forall~s\in[2,6)$},\\
\tilde{u}_n(x)\rightarrow \tilde{u}(x),~~~~& \hbox{$ \,\
a.e.~~\hbox{in}~~\R^3$}.
\end{array}%
\right.
\end{equation}
Then by Fatou's Lemma and \eqref{3.2}, we have that
\begin{equation}\label{3.4}
\ds\int_{\R^3\backslash B_R(0)}a|D\tilde{u}|^2+\tilde{u}^2\leq
\varepsilon.\end{equation} By \eqref{3.2}-\eqref{3.4} and the
Sobolev embedding theorem, we see that for any $s\in[2,6)$ and any
$\varepsilon>0,$ there exists a $C>0$ such that
\begin{equation}\label{3.5}
\begin{array}{ll}
\ds\int_{\R^3}|\tilde{u}_n-\tilde{u}|^s&\leq\ds\int_{B_R(0)}|\tilde{u}_n-\tilde{u}|^s+\ds\int_{\R^3\backslash
B_R(0)}|\tilde{u}_n-\tilde{u}|^s\\[5mm]
&\leq\varepsilon+C(\|\tilde{u}_n\|_{H^1(\R^3\backslash
B_R(0))}+\|\tilde{u}\|_{H^1(\R^3\backslash
B_R(0))})\\[5mm]
&\leq (1+2C)\varepsilon.
\end{array}
\end{equation}
Then \begin{equation}\label{3.6} \tilde{u}_n\rightarrow
\tilde{u}~~~~\hbox{in}~~L^s(\R^3)~~\hbox{for~any}~s\in[2,6).
\end{equation}
Since $\tilde{u}_n\in \mathcal{M}$, by Lemma 2.7,
$|\tilde{u}_n|_{p+1}\geq C$ for some $C>0$, hence
$|\tilde{u}|_{p+1}\geq C>0$, which implies that $\tilde{u}\neq0$.

We next show that $\tilde{u}_n\rightarrow\tilde{u}$ in $H^1(\R^3)$.
Indeed, by \eqref{3.3} \eqref{3.6} and Fatou's Lemma, we have that
$$\alpha\triangleq a\ds\int_{\R^3}|D\tilde{u}|^2\leq\liminf\limits_{n\rightarrow+\infty}a\ds\int_{\R^3}|D\tilde{u}_n|^2\triangleq\tilde{\alpha},$$
$$\beta\triangleq\ds\int_{\R^3}|\tilde{u}|^2\leq\liminf\limits_{n\rightarrow+\infty}\ds\int_{\R^3}|\tilde{u}_n|^2\triangleq\tilde{\beta},$$
$$ b\left(\ds\int_{\R^3}|D\tilde{u}|^2\right)^2\leq\liminf\limits_{n\rightarrow+\infty}b\left(\ds\int_{\R^3}|D\tilde{u}_n|^2\right)^2$$
and
$$\ds\int_{\R^3}|\tilde{u}|^{p+1}=\lim\limits_{n\rightarrow+\infty}\ds\int_{\R^3}|\tilde{u}_n|^{p+1}.$$
Then
\begin{equation}\label{3.7}
G(\tilde{u})\leq\liminf\limits_{n\rightarrow+\infty}G(\tilde{u}_n)=0.
\end{equation}
Just suppose that $\alpha+\beta<\tilde{\alpha}+\tilde{\beta}$, then
$I(\tilde{u})<c$ and $G(\tilde{u})<0$, therefore, $\tilde{u}\notin
\mathcal{M}$. By Lemma 2.5, there exists a $0<t_0<1$ such that
$\tilde{u}_{t_0}\in \mathcal{M}$. Since $G(\tilde{u}_{t_0})=0$ and
$G(\tilde{u})<0$, $t_0<1$, then we see that
$$I(\tilde{u}_{t_0})=\Phi(\tilde{u}_{t_0})<\Phi(\tilde{u})\leq\lim\limits_{n\rightarrow+\infty}\Phi(u_n)=\lim\limits_{n\rightarrow+\infty}I(u_n)=c,$$
which is impossible, where $\Phi$ is given in \eqref{2.9}. Then
$\alpha+\beta=\tilde{\alpha}+\tilde{\beta}$. So
$\tilde{u}_n\rightarrow\tilde{u}$ in $H^1(\R^3)$.

We deduce that $\tilde{u}\in \mathcal{M}$ and $I(\tilde{u})=c$, i.e.
$I|_{\mathcal{M}}$ attains its minimum at $\tilde{u}$, then
$\tilde{u}$ is a nontrivial critical point of $I|_{\mathcal{M}}$,
hence by Lemma 2.6, we see that $\tilde{u}$ is a ground state
solution of \eqref{1.16}.

It is easy to see that $|\tilde{u}|$ is also a ground state solution
of \eqref{1.16} since the functional $I$ and the manifold
$\mathcal{M}$ are symmetric, hence we may assume that such a ground
state solution does not change sign, i.e. $\tilde{u}\geq0$. By using
the strong maximum principle and standard arguments, see e.g.
\cite{af,be,li,m,to,tr}, we obtain that $\tilde{u}(x)>0$ for all
$x\in\R^3$. Therefore, $\tilde{u}$ is a positive ground state
solution of \eqref{1.16} and the proof is completed.
\end{proof}

Assume that $(V_1)-(V_3)$ hold, we apply Proposition 1.3 to prove
Theorem 1.1.

Set $T=[\delta,1]$, where $\delta\in(0,1)$ is a positive constant.
We consider a family of functionals on $H^1(\R^3)$
\begin{equation}\label{4.1}
I_{V,\lambda}(u)=\frac{1}{2}\ds\int_{\R^3}(a|Du|^2+V(x)|u|^2)+\frac{b}{4}\left(\ds\int_{\R^3}|Du|^2\right)^2-\frac{\lambda}{p+1}\ds\int_{\R^3}|u|^{p+1},~~\forall~\lambda\in
[\delta,1].
\end{equation}
Then $I_{V,\lambda}(u)=A(u)-\lambda B(u),$
where$$A(u)=\frac{1}{2}\ds\int_{\R^3}(a|Du|^2+V(x)|u|^2)+\frac{b}{4}\left(\ds\int_{\R^3}|Du|^2\right)^2\rightarrow+\infty~~\hbox{as}~~\|u\|\rightarrow+\infty,$$
$$B(u)=\frac{1}{p+1}\ds\int_{\R^3}|u|^{p+1}\geq0.$$

\begin{lem}\label{lem4.1}\ \
Assume that $(V_2)(V_3)$ hold and $2<p<5$, then

(i)~~ there exists a $v\in H^1(\R^3)\backslash\{0\}$ such that
$I_{V,\lambda}(v)\leq0$ for all $\lambda\in [\delta,1]$;

(ii)~~$c_\lambda=\inf\limits_{\gamma\in
\Gamma}\max\limits_{t\in[0,1]}I_{V,\lambda}(\gamma(t))>\max\{I_{V,\lambda}(0),~I_{V,\lambda}(v)\}$
for all $\lambda\in [\delta,1]$, where $\Gamma=\{\gamma\in
C([0,1],H^1(\R^3))|~\gamma(0)=0,~\gamma(1)=v\}.$
\end{lem}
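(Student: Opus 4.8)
The plan is to check that every functional $I_{V,\lambda}$, $\lambda\in[\delta,1]$, possesses a mountain-pass geometry around the origin with constants that do not degenerate as $\lambda$ varies, so that Proposition 1.3 will be applicable on the whole interval $T=[\delta,1]$. For part \textbf{(i)}, I would fix any $w\in H^1(\R^3)\backslash\{0\}$ and use the same dilation as in Lemma 2.2, namely $w_t(x)=tw(t^{-1}x)$ for $t>0$. A change of variables gives
$$I_{V,\lambda}(w_t)=\frac{a}{2}t^3\ds\int_{\R^3}|Dw|^2+\frac{1}{2}t^5\ds\int_{\R^3}V(tx)|w|^2+\frac{b}{4}t^6\left(\ds\int_{\R^3}|Dw|^2\right)^2-\frac{\lambda}{p+1}t^{p+4}\ds\int_{\R^3}|w|^{p+1}.$$
By $(V_2)$ one has $V(tx)\le V_\infty$ a.e., so the middle term is at most $\tfrac{V_\infty}{2}t^5\int_{\R^3}|w|^2$, while $\lambda\ge\delta$ bounds the last term from below. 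Hence $I_{V,\lambda}(w_t)$ is dominated, uniformly in $\lambda\in[\delta,1]$, by a fixed function of $t$ whose dominant term as $t\to+\infty$ is $-\tfrac{\delta}{p+1}t^{p+4}\int_{\R^3}|w|^{p+1}$; since $p+4>6$, this expression tends to $-\infty$. Choosing $t_0$ large enough, independently of $\lambda$, and setting $v:=w_{t_0}$ yields $I_{V,\lambda}(v)\le0$ for all $\lambda\in[\delta,1]$.

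For part \textbf{(ii)}, since $I_{V,\lambda}(0)=0\ge I_{V,\lambda}(v)$, it is enough to produce $r>0$ and $\rho>0$, both independent of $\lambda\in[\delta,1]$, with $I_{V,\lambda}(u)\ge\rho$ whenever $\|u\|=r$. Using that $\|\cdot\|$ is equivalent to the usual $H^1(\R^3)$-norm (this is where $(V_3)$ and $a>0$ enter) and the Sobolev embedding $H^1(\R^3)\hookrightarrow L^{p+1}(\R^3)$, valid because $2<p+1<6$, there is $C_S>0$ with $\int_{\R^3}|u|^{p+1}\le C_S\|u\|^{p+1}$. Dropping the nonnegative term $\frac{b}{4}\left(\int_{\R^3}|Du|^2\right)^2$ and using $\lambda\le1$,
$$I_{V,\lambda}(u)\ge\frac{1}{2}\|u\|^2-\frac{C_S}{p+1}\|u\|^{p+1},$$
and since $p+1>2$ the right-hand side is strictly positive for $\|u\|$ small; I would fix such a radius $r$ and let $\rho>0$ be the corresponding value. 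Enlarging $t_0$ from part (i) if necessary so that in addition $I_{V,\lambda}(v)<-\tfrac{C_S}{p+1}r^{p+1}$ for all $\lambda\in[\delta,1]$, the displayed lower bound forces $\|v\|>r$.

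It then remains to run the standard topological argument: any $\gamma\in\Gamma$ joins $\gamma(0)=0$, with $\|\gamma(0)\|=0<r$, to $\gamma(1)=v$, with $\|\gamma(1)\|>r$, so by continuity of $t\mapsto\|\gamma(t)\|$ there is $t^{\ast}\in(0,1)$ with $\|\gamma(t^{\ast})\|=r$, whence $\max_{t\in[0,1]}I_{V,\lambda}(\gamma(t))\ge I_{V,\lambda}(\gamma(t^{\ast}))\ge\rho$. Taking the infimum over $\gamma\in\Gamma$ gives $c_\lambda\ge\rho>0=I_{V,\lambda}(0)\ge I_{V,\lambda}(v)$, which is (ii). The computations are routine; the only point needing care — and the main (mild) obstacle — is to verify that $r$, $\rho$ and $t_0$ can be chosen once and for all, independently of $\lambda$, which is precisely what the bounds $\delta\le\lambda\le1$ provide, and what is needed for Proposition 1.3 to apply on $T=[\delta,1]$.
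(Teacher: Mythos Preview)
Your proposal is correct and follows essentially the same route as the paper: for (i) both use the dilation $w_t(x)=tw(t^{-1}x)$ together with the uniform bounds $V\le V_\infty$ and $\lambda\ge\delta$ to produce a single $v$ working for all $\lambda$, and for (ii) both drop the quartic term and invoke the Sobolev inequality to obtain $I_{V,\lambda}(u)\ge\tfrac12\|u\|^2-\tfrac{C}{p+1}\|u\|^{p+1}$. Your write-up is in fact slightly more careful than the paper's, since you explicitly verify $\|v\|>r$ before running the intermediate-value argument.
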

\begin{proof}~~

\noindent(i)~~For fixed $u\in H^1(\R^3)\backslash\{0\}$ and any
$\lambda\in [\delta,1]$, we have that
$$I_{V,\lambda}(u)\leq I^\infty_\delta(u)=\frac{1}{2}\ds\int_{\R^3}(a|Du|^2+V_\infty|u|^2)+\frac{b}{4}\left(\ds\int_{\R^3}|Du|^2\right)^2-\frac{\delta}{p+1}\ds\int_{\R^3}|u|^{p+1}.$$
Set $u_t(x)=tu(t^{-1}x)$, $\forall~t>0$, by Lemma 2.2, then
$I^\infty_\delta(u_t)\rightarrow-\infty$ as $t\rightarrow+\infty$.
Hence, take $v=u_t$ for $t$ large, we have that
$I_{V,\lambda}(v)\leq I^\infty_\delta(v)<0$.

\noindent(ii)~~Since
$$I_{V,\lambda}(u)\geq
\frac{1}{2}\|u\|^2-\frac{\lambda}{p+1}\ds\int_{\R^3}|u|^{p+1}\geq\frac{1}{2}\|u\|^2-\frac{C}{p+1}\|u\|^{p+1}$$
and $p>2$, we see that $I_{V,\lambda}$ has a strict local minimum at
0 and hence $c_\lambda>0$.
\end{proof}

Lemma 3.1 and the definition of $I_{V,\lambda}(u)$ imply that
$I_{V,\lambda}(u)$ satisfies the assumptions of Proposition 1.3 with
$X=H^1(\R^3)$ and $\Phi_\lambda=I_{V,\lambda}$. So for a.e.
$\lambda\in [\delta,1]$, there exists a bounded sequence
$\{u_n\}\subset H^1(\R^3)$ (for simplicity, we denote $\{u_n\}$
instead of $\{u_n(\lambda)\}$) such that
$$I_{V,\lambda}(u_n)\rightarrow
c_\lambda,~~~~I^{\prime}_{V,\lambda}(u_n)\rightarrow0~~\hbox{in}~H^1(\R^3).$$

\begin{lem}\label{lem4.2}(\cite{j}, Lemma 2.3)\ \
Under the assumptions of Proposition 1.3, the map
$\lambda\rightarrow c_\lambda$ is non-increasing and left
continuous.
\end{lem}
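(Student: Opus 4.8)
The plan is to follow the standard argument of Jeanjean \cite{j}, adapted to the present family $I_{V,\lambda}=A-\lambda B$ with $A(u)=\frac12\int_{\R^3}(a|Du|^2+V(x)|u|^2)+\frac{b}{4}(\int_{\R^3}|Du|^2)^2$ and $B(u)=\frac{1}{p+1}\int_{\R^3}|u|^{p+1}\ge0$. For the monotonicity, I would fix $\delta\le\lambda_1<\lambda_2\le1$ and observe that, since $B(u)\ge0$ for all $u$, one has $I_{V,\lambda_2}(u)=A(u)-\lambda_2 B(u)\le A(u)-\lambda_1 B(u)=I_{V,\lambda_1}(u)$ pointwise on $H^1(\R^3)$. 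Because the class $\Gamma$ of admissible paths is the same for every $\lambda$ (it depends only on the fixed endpoints $0$ and $v$, both of which lie in the relevant sublevel sets for all $\lambda\in[\delta,1]$ by Lemma 3.1(i)), taking the max over $t\in[0,1]$ and then the inf over $\gamma\in\Gamma$ preserves the inequality, giving $c_{\lambda_2}\le c_{\lambda_1}$; hence $\lambda\mapsto c_\lambda$ is non-increasing.

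For the left-continuity at a point $\lambda_0\in(\delta,1]$, I would take an increasing sequence $\lambda_k\uparrow\lambda_0$. By monotonicity $c_{\lambda_k}$ is non-increasing in the sense $c_{\lambda_k}\ge c_{\lambda_0}$, and $\{c_{\lambda_k}\}$ is monotone non-increasing and bounded below by $c_{\lambda_0}$, so $\ell:=\lim_{k}c_{\lambda_k}$ exists and $\ell\ge c_{\lambda_0}$. It remains to show $\ell\le c_{\lambda_0}$. Fix $\eta>0$ and choose $\gamma\in\Gamma$ with $\max_{t\in[0,1]}I_{V,\lambda_0}(\gamma(t))\le c_{\lambda_0}+\eta$. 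Set $M_B:=\max_{t\in[0,1]}B(\gamma(t))<\infty$ (finite since $\gamma$ is continuous into $H^1(\R^3)$ and $B$ is continuous by the Sobolev embedding $H^1(\R^3)\hookrightarrow L^{p+1}(\R^3)$, $2<p<5$). Then for each $k$,
$$c_{\lambda_k}\le\max_{t\in[0,1]}I_{V,\lambda_k}(\gamma(t))=\max_{t\in[0,1]}\bigl(I_{V,\lambda_0}(\gamma(t))+(\lambda_0-\lambda_k)B(\gamma(t))\bigr)\le c_{\lambda_0}+\eta+(\lambda_0-\lambda_k)M_B.$$
Letting $k\to\infty$ gives $\ell\le c_{\lambda_0}+\eta$, and since $\eta>0$ is arbitrary, $\ell\le c_{\lambda_0}$. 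Combining, $\lim_k c_{\lambda_k}=c_{\lambda_0}$, which is left-continuity.

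The only mild subtlety — not really an obstacle — is making sure the path class $\Gamma$ is genuinely $\lambda$-independent and that $c_\lambda$ is well defined (i.e. $\Gamma\ne\varnothing$ and the min-max level is finite and $>\max\{I_{V,\lambda}(0),I_{V,\lambda}(v)\}$); this is exactly the content of Lemma 3.1, so it may be invoked directly. Everything else is the elementary observation that adding a nonnegative functional $B$ scaled by a parameter produces a family of functionals that is pointwise monotone in $\lambda$, together with the uniform bound $M_B$ on a single near-optimal path to upgrade monotonicity to left-continuity. I do not expect any genuine difficulty here; the proof is a verbatim adaptation of \cite[Lemma 2.3]{j}.
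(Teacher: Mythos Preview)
Your argument is correct and is precisely the standard proof from Jeanjean \cite{j}; note that the paper does not give its own proof of this lemma but simply cites \cite[Lemma 2.3]{j}, so there is nothing to compare against beyond confirming that your adaptation matches the cited source. The only cosmetic point is that the lemma is stated at the level of the abstract Proposition~1.3 (general $A$, $B$, $X$), whereas you specialize to $I_{V,\lambda}$ on $H^1(\R^3)$; since the argument uses nothing beyond $B\ge0$, continuity of $B$, and the $\lambda$-independence of $\Gamma$, it carries over verbatim to the abstract setting.
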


By Theorem 1.4, we see that for any $\lambda\in [\delta,1]$, the
associated limit problem
\begin{equation}\label{4.2}
\left\{%
\begin{array}{ll}
    -\left(a+b\ds\int_{\R^3}|D u|^2\right)\Delta u+V_\infty u=\lambda|u|^{p-1}u, & \hbox{$x\in \R^3$}, \\
    u\in H^1(\R^3),~~~~u>0, & \hbox{$x\in \R^3$},\\
\end{array}%
\right.\end{equation} where $2<p<5$, has a positive ground state
solution in $ H^1(\R^3)$, i.e. for any $\lambda\in [\delta,1]$,
\begin{equation}\label{4.3}
m_{\lambda}^\infty\triangleq\inf_{u\in
\mathcal{M}^\infty_\lambda}I^\infty_{\lambda}(u)
\end{equation}
is achieved at some
$u_\lambda^\infty\in\mathcal{M}^\infty_\lambda\triangleq\{u\in
H^1(\R^3)\backslash\{0\}|~G^\infty_\lambda(u)=0\}$ and
$I^{\prime\infty}_\lambda(u_\lambda^\infty)=0$, where
\begin{equation}\label{4.4}
I^\infty_{\lambda}(u)=\frac{1}{2}\ds\int_{\R^3}(a|Du|^2+V_\infty|u|^2)+\frac{b}{4}\left(\ds\int_{\R^3}|Du|^2\right)^2-\frac{\lambda}{p+1}\ds\int_{\R^3}|u|^{p+1}
\end{equation}
and
$$G_\lambda^\infty(u)\triangleq\frac{3}{2}a\ds\int_{\R^3}|Du|^2+\frac{5}{2}\ds\int_{\R^3}V_\infty|u|^2+\frac{3}{2}b\left(\ds\int_{\R^3}|Du|^2\right)^2-\frac{(p+4)\lambda}{p+1}\ds\int_{\R^3}|u|^{p+1}.
$$
\begin{lem}\label{lem4.3}\ \
Assume that $(V_1)-(V_3)$ hold and $2<p<5$, then
$c_\lambda<m_\lambda^\infty$ for any $\lambda\in [\delta,1]$.
\end{lem}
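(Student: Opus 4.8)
The target inequality should be a pure consequence of the strict part of $(V_2)$ — that $V<V_\infty$ on a set of positive Lebesgue measure — and my plan is to compare $c_\lambda$ with $m_\lambda^\infty$ along the dilations of the limit ground state. \emph{Step 1.} By Theorem \ref{thm1.4} applied to the limit problem \eqref{4.2}, $m_\lambda^\infty$ is attained at a positive ground state $u_\lambda^\infty\in\mathcal{M}^\infty_\lambda$ with $I^{\prime\infty}_\lambda(u_\lambda^\infty)=0$. Writing $u_t(x)=tu(t^{-1}x)$ as in Section~2, arguing as in Lemmas \ref{2.3} and \ref{lem2.5} shows that $t\mapsto I^\infty_\lambda((u_\lambda^\infty)_t)$ has a unique critical point, which is a maximum, located at $t=1$ since $u_\lambda^\infty$ solves \eqref{4.2}; hence $\max_{t>0}I^\infty_\lambda((u_\lambda^\infty)_t)=m_\lambda^\infty$.

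\emph{Step 2.} I would exploit the elementary identity $I_{V,\lambda}(w)=I^\infty_\lambda(w)-\frac12\int_{\R^3}\bigl(V_\infty-V(x)\bigr)|w|^2$. Taking $w=(u_\lambda^\infty)_t$: by $(V_2)$ the weight $V_\infty-V$ is nonnegative and strictly positive on a set of positive measure, while $(u_\lambda^\infty)_t>0$ a.e., so the subtracted term is strictly positive for every $t>0$. The map $t\mapsto I_{V,\lambda}((u_\lambda^\infty)_t)$ is continuous, positive for $t$ near $0$, and tends to $-\infty$ as $t\to+\infty$ (being dominated by $I^\infty_\lambda((u_\lambda^\infty)_t)$, cf.\ Lemma \ref{lem2.2}); hence it attains its maximum at some $t^\ast>0$, and evaluating the identity at $t^\ast$ gives $\max_{t>0}I_{V,\lambda}((u_\lambda^\infty)_t)<I^\infty_\lambda((u_\lambda^\infty)_{t^\ast})\le m_\lambda^\infty$.

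\emph{Step 3 (the main obstacle).} It remains to show $c_\lambda\le\max_{t>0}I_{V,\lambda}((u_\lambda^\infty)_t)$. The subtlety is that the definition of $c_\lambda$ in Lemma \ref{lem4.1} forces the competing paths to terminate at the prescribed point $v$, which is not a dilation of $u_\lambda^\infty$. I would get around this by building one admissible path $\gamma\in\Gamma$ from $0$ to $v$ along which $I_{V,\lambda}<m_\lambda^\infty$, by concatenating: (a) the dilation path $t\mapsto(u_\lambda^\infty)_t$ from $t=0$ (value $0$) up to $(u_\lambda^\infty)_S$ for $S$ large; (b) the straight segment $\psi_\theta:=(1-\theta)(u_\lambda^\infty)_S+\theta\,v_S$, $\theta\in[0,1]$, where $v_S(x)=Sv(S^{-1}x)$; (c) the dilation path $t\mapsto v_t$ from $v_S$ back down to $v=v_1$. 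Along (a), $I_{V,\lambda}\le\max_{t>0}I_{V,\lambda}((u_\lambda^\infty)_t)<m_\lambda^\infty$ by Step 2. Along (c) I would use that $v$ may be taken, exactly as in the proof of Lemma \ref{lem4.1}, of the form $v=(u_0)_{T_0}$ with $u_0\ge0$ fixed and $T_0$ so large that $I^\infty_\delta(v)<0$; by the unimodality of $t\mapsto I^\infty_\delta((u_0)_t)$ this forces $I^\infty_\delta((u_0)_s)\le0$ for all $s\ge T_0$, whence $I_{V,\lambda}(v_t)\le I^\infty_\delta(v_t)\le0<m_\lambda^\infty$ for all $t\ge1$. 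The only genuinely nontrivial point is (b): since $(u_\lambda^\infty)_S$ and $v_S$ are nonnegative there is no cancellation, and a direct scaling count gives, uniformly in $\theta$, $\int_{\R^3}|D\psi_\theta|^2\le CS^3$, $\int_{\R^3}\psi_\theta^2\le CS^5$ and $\int_{\R^3}|\psi_\theta|^{p+1}\ge cS^{p+4}$ for positive constants $c,C$; hence the positive part of $I_{V,\lambda}(\psi_\theta)$ is at most $CS^6$ and the negative part at least $cS^{p+4}$, and since $p+4>6$ — exactly where $p>2$ enters — the whole segment lies in $\{I_{V,\lambda}<0\}$ once $S$ is large. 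Reparametrizing on $[0,1]$ gives $\gamma\in\Gamma$ with $\max_{[0,1]}I_{V,\lambda}(\gamma)<m_\lambda^\infty$, hence $c_\lambda<m_\lambda^\infty$.

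A cleaner alternative, which would make Step 3 immediate, is to first establish — in the spirit of Lemma \ref{lem2.8}, now invoking $(V_1)$ to control the term $(DV(x),x)$ in the Poho\u{z}aev identity associated with $I_{V,\lambda}$ — the min-max representation $c_\lambda=\inf_{u\in H^1(\R^3)\setminus\{0\}}\max_{t>0}I_{V,\lambda}(u_t)$; granted this, $c_\lambda\le\max_{t>0}I_{V,\lambda}((u_\lambda^\infty)_t)$ is trivial and the proof reduces to Steps 1--2. Either way, the heart of the matter is the strict energy drop forced by $(V_2)$; everything else is routine.
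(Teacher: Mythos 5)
Your proof is correct and its core is the same as the paper's: compare $c_\lambda$ with $m_\lambda^\infty$ along the dilation ray $t\mapsto (u_\lambda^\infty)_t$ of the limit ground state, using $\max_{t>0}I^\infty_\lambda((u_\lambda^\infty)_t)=m_\lambda^\infty$ (your Step 1) and the strict drop $I_{V,\lambda}(w)=I^\infty_\lambda(w)-\tfrac12\int(V_\infty-V)|w|^2<I^\infty_\lambda(w)$ forced by $(V_2)$ and the positivity of $u_\lambda^\infty$ (your Step 2). The only divergence is your Step 3: the paper dispenses with it entirely by simply \emph{choosing} the endpoint $v$ in Lemma \ref{lem4.1}(i) to be $(u_\lambda^\infty)_t$ for $t$ large, so the dilation path itself lies in $\Gamma$ and $c_\lambda\le\max_{t>0}I_{V,\lambda}((u_\lambda^\infty)_t)$ is immediate. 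Your concatenation argument (and the scaling count $CS^6$ versus $cS^{p+4}$ with $p+4>6$ on the joining segment) is sound and in fact more scrupulous: it yields the bound for an arbitrary admissible endpoint $v$, which is relevant because Proposition \ref{pro1.3} and Lemma \ref{lem4.2} require the endpoint to be fixed independently of $\lambda$, a point the paper's choice $v=(u_\lambda^\infty)_t$ quietly elides. Either resolution is acceptable; the heart of the lemma is exactly where you located it.
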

\begin{proof}~~
 Let $u^\infty_\lambda$ be the minimizer of
$m^\infty_\lambda$, by Lemma 2.5, we have that
$I^\infty_\lambda(u^\infty_\lambda)=\max\limits_{t>0}I^\infty_\lambda(tu^\infty_\lambda(t^{-1}x)).$
Then choosing $v(x)=tu^\infty_\lambda(t^{-1}x)$ for $t$ large in
Lemma 3.1 (i), by $(V_2)$, we see that for $\forall$
$\lambda\in[\delta,1]$,
$$c_\lambda\leq\max_{t>0}I_{V,\lambda}(tu^\infty_\lambda(t^{-1}x))<\max_{t>0}I^\infty_\lambda(tu^\infty_\lambda(t^{-1}x))=I^\infty_\lambda(u^\infty_\lambda)=m^\infty_\lambda.$$
\end{proof}

In order to prove that the functional $I_{V,\lambda}$ satisfies
$(PS)_{c_\lambda}$ condition for a.e. $\lambda\in [\delta,1]$, we
need the following new version of a global compactness lemma, which is suitable for Kirchhoff equations.

\begin{lem}\label{lem4.4}\ \
Assume that $(V_2)(V_3)$ hold and $2<p<5$. For $c>0$ and
$\forall~\lambda\in[\delta,1]$, let $\{u_n\}\subset H^1(\R^3)$ be a
bounded $(PS)_{c}$ sequence for $I_{V,\lambda}$, then there exists a
$u\in H^1(\R^3)$ and $A\in\R$ such that $J_{V,\lambda}^\prime(u)=0$, where
\begin{equation}\label{4.13}
J_{V,\lambda}(u)=\frac{a+bA^2}{2}\ds\int_{\R^3}|Du|^2+\frac{1}{2}\ds\int_{\R^3}V(x)|u|^2-\frac{\lambda}{p+1}\ds\int_{\R^3}|u|^{p+1}
\end{equation}
and either

(i)~~$u_n\rightarrow u$ in $H^1(\R^3)$,

\noindent or

(ii)~~there exists an $l\in \mathbb{N}$ and $\{y_n^k\}\subset\R^3$
with $|y_n^k|\rightarrow\infty$ as $n\rightarrow\infty$ for each $1\leq k\leq l$, nontrivial
solutions $w^1,\cdots,w^l$ of the following problem
\begin{equation}\label{4.5}
-(a+bA^2)\Delta u+V_\infty u=\lambda|u|^{p-1}u
\end{equation}
such that
$$
c+\frac{bA^4}{4}=
J_{V,\lambda}(u)+\sum_{k=1}^{l}J_\lambda^\infty(w^k)
$$
 where \begin{equation}\label{4.14}
 J^\infty_{\lambda}(u)=\frac{a+bA^2}{2}\ds\int_{\R^3}|Du|^2+\frac{1}{2}\ds\int_{\R^3}V_\infty|u|^2-\frac{\lambda}{p+1}\ds\int_{\R^3}|u|^{p+1}\end{equation}
  and
$$\left\|u_n-u-\sum_{k=1}^{l}w^k(\cdot-y_n^k)\right\|\rightarrow0,$$
$$A^2=|Du|_2^2+\sum_{k=1}^{l}|Dw^k|_2^2.$$
\end{lem}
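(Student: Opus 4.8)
The plan is to run the Struwe--Benci--Cerami ``global compactness'' iteration, the new ingredient being that the Kirchhoff term is handled by \emph{freezing} the coefficient. Since $\{u_n\}$ is bounded in $H^1(\R^3)$, along a subsequence $\int_{\R^3}|Du_n|^2\to A^2$ for some $A\ge0$ and $u_n\rightharpoonup u$ in $H^1(\R^3)$, with $u_n\to u$ in $L^s_{loc}(\R^3)$ $(2\le s<6)$ and a.e. Because $b\int_{\R^3}|Du_n|^2\to bA^2$ and $\big|\int_{\R^3}Du_n\cdot D\varphi\big|\le C\|\varphi\|$, one has $\|I'_{V,\lambda}(u_n)-J'_{V,\lambda}(u_n)\|_{H^{-1}}\to0$ with $J_{V,\lambda}$ as in \eqref{4.13}; hence $J'_{V,\lambda}(u_n)\to0$ in $H^{-1}$. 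Since the derivative of $J_{V,\lambda}$ is weakly sequentially continuous on $H^1(\R^3)$ --- each of its three terms passes to the limit against a test function $\varphi\in C_c^\infty(\R^3)$ by weak $L^2$-convergence of the gradients, local strong convergence, and the bounds on $V$ from $(V_1)(V_2)$ --- we obtain $J'_{V,\lambda}(u)=0$. If in addition $u_n\to u$ in $H^1(\R^3)$ we are in alternative (i).

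So suppose $u_n^1:=u_n-u\rightharpoonup0$ does not converge to $0$ in $H^1(\R^3)$. Then $u_n^1$ cannot vanish in the sense of Lions: otherwise Lemma~\ref{lem2.10} would give $u_n^1\to0$ in $L^{p+1}(\R^3)$, and since $\langle J'_{V,\lambda}(u_n^1),u_n^1\rangle\to0$ (from a Brezis--Lieb-type splitting, $J'_{V,\lambda}(u_n)\to0$ and $J'_{V,\lambda}(u)=0$) we would get $(a+bA^2)\int_{\R^3}|Du_n^1|^2+\int_{\R^3}V|u_n^1|^2\to0$, whence $u_n^1\to0$ in $H^1(\R^3)$ by $a>0$ and $(V_2)(V_3)$, a contradiction. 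Hence there exist $\{y_n^1\}\subset\R^3$ and $r,\delta>0$ with $\liminf_n\int_{B_r(y_n^1)}|u_n^1|^2\ge\delta$, so that $u_n^1(\cdot+y_n^1)\rightharpoonup w^1$ in $H^1(\R^3)$ for some $w^1\neq0$; moreover $|y_n^1|\to\infty$, for if a subsequence of $\{y_n^1\}$ converged to $y^1$, then $|u_n^1(\cdot+y_n^1)-u_n^1(\cdot+y^1)|_2\le|y_n^1-y^1|\,|Du_n^1|_2\to0$ would force $w^1$ to be the weak limit of $u_n^1(\cdot+y^1)$, which is $0$. To identify $w^1$, note that $(V_2)$ actually yields $V(x)\to V_\infty$ as $|x|\to\infty$ (because $V\le V_\infty$ a.e. and $\liminf_{|x|\to\infty}V(x)=V_\infty$). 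Testing $J'_{V,\lambda}(u_n)\to0$ against $\varphi(\cdot-y_n^1)$, $\varphi\in C^\infty_c(\R^3)$, and translating by $y_n^1$: the contributions of $u(\cdot+y_n^1)$ vanish (a fixed $H^1$ function pushed to infinity tends weakly, and $L^s_{loc}$-strongly, to $0$), the potential term tends to $V_\infty\int_{\R^3}w^1\varphi$ because $V(\cdot+y_n^1)\to V_\infty$ uniformly on the support of $\varphi$, and the remaining terms pass to the limit by local strong convergence. This gives $\langle (J^\infty_\lambda)'(w^1),\varphi\rangle=0$ with $J^\infty_\lambda$ as in \eqref{4.14}, i.e. $w^1$ solves \eqref{4.5}.

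We now iterate. Put $u_n^{k+1}:=u_n^k-w^k(\cdot-y_n^k)\rightharpoonup0$; using $u_n^k(\cdot+y_n^k)\rightharpoonup w^k$, Lemma~\ref{lem2.12} for the $L^{p+1}$-term, weak $L^2$-convergence for the quadratic terms, $V(\cdot+y_n^k)\to V_\infty$ and $|y_n^k|\to\infty$, one gets
$$\int_{\R^3}|Du_n^k|^2=\int_{\R^3}|Du_n^{k+1}|^2+\int_{\R^3}|Dw^k|^2+o(1),\qquad |u_n^k|_{p+1}^{p+1}=|u_n^{k+1}|_{p+1}^{p+1}+|w^k|_{p+1}^{p+1}+o(1),$$
$$\int_{\R^3}V|u_n^k|^2=\int_{\R^3}V|u_n^{k+1}|^2+V_\infty|w^k|_2^2+o(1),$$
together with $J'_{V,\lambda}(u_n^{k+1})\to0$ in $H^{-1}$ (the interaction terms vanish since $|y_n^k|\to\infty$; one also checks $|y_n^k-y_n^j|\to\infty$ for $j\ne k$, otherwise two bumps merge and contradict $u_n^k\rightharpoonup0$). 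If $u_n^{k+1}\to0$ in $H^1(\R^3)$ we stop; otherwise the previous paragraph applied to $u_n^{k+1}$ produces $y_n^{k+1}$ and a nontrivial $w^{k+1}$ solving \eqref{4.5}. Every such $w$ satisfies $\|w\|_{H^1(\R^3)}^2\ge\kappa_0>0$ uniformly (test \eqref{4.5} with $w$ and use the Sobolev inequality together with $a+bA^2\ge a>0$ and $V_\infty>0$), while $\|u\|^2+\sum_{j\le k}\big(a|Dw^j|_2^2+V_\infty|w^j|_2^2\big)\le\|u_n\|^2+o(1)$ stays bounded; hence only finitely many bubbles arise and the process stops at some $l$, with $\|u_n-u-\sum_{k=1}^l w^k(\cdot-y_n^k)\|\to0$. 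Summing the gradient splittings gives $A^2=|Du|_2^2+\sum_{k=1}^l|Dw^k|_2^2$. Passing to the limit in $I_{V,\lambda}(u_n)$, using $\tfrac b4\big(\int_{\R^3}|Du_n|^2\big)^2\to\tfrac b4A^4$, the three splittings above (iterated) and the algebraic identity $\tfrac{a+bA^2}{2}\big(|Du|_2^2+\sum_k|Dw^k|_2^2\big)=\tfrac a2\big(|Du|_2^2+\sum_k|Dw^k|_2^2\big)+\tfrac b2A^4$, one arrives at $c+\tfrac{bA^4}{4}=J_{V,\lambda}(u)+\sum_{k=1}^l J^\infty_\lambda(w^k)$, which is alternative (ii).

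The main obstacle is the bookkeeping of the nonlocal term. Freezing the coefficient via $A^2=\lim\int_{\R^3}|Du_n|^2$ turns the problem into a Schr\"{o}dinger-type one, but it is essential that the \emph{same} constant $A$ govern the equation for the weak limit $u$ and for every bubble $w^k$, since only then does the square in $\tfrac b4\big(\int_{\R^3}|Du_n|^2\big)^2$ split consistently; this is exactly what forces one to work with the frozen functionals $J_{V,\lambda}$ and $J^\infty_\lambda$ rather than $I_{V,\lambda}$, $I^\infty_\lambda$, and produces the genuinely nonlocal extra term $\tfrac{bA^4}{4}$ in the level identity. The other delicate point, that the bubbles solve the limit equation with potential $V_\infty$, relies on upgrading $(V_2)$ to the statement $V(x)\to V_\infty$ as $|x|\to\infty$.
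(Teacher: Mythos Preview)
Your argument is correct and follows essentially the same route as the paper: freeze the Kirchhoff coefficient via $A^2=\lim_n\int_{\R^3}|Du_n|^2$ so that $\{u_n\}$ becomes a $(PS)$ sequence for the Schr\"odinger-type functional $J_{V,\lambda}$ at level $c+\tfrac{bA^4}{4}$, then run the standard Struwe--Lions splitting (Lemma~\ref{lem2.10}, Lemma~\ref{lem2.12}, and $(V_2)$) to peel off finitely many bubbles solving \eqref{4.5}. One small slip: in the weak-continuity step you invoke $(V_1)(V_2)$, but the lemma is stated under $(V_2)(V_3)$ only; the bounds you need ($V\le V_\infty$ a.e.\ above, and positivity of the quadratic form below) already follow from $(V_2)(V_3)$.
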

\begin{proof}~~
Since $\{u_n\}$ is bounded in $H^1(\R^3)$, there exists a $u\in
H^1(\R^3)$ and $A\in\R$ such that
\begin{equation}\label{4.6}
u_n\rightharpoonup u~~\hbox{in}~H^1(\R^3)
\end{equation}
and \begin{equation}\label{4.7}
\ds\int_{\R^3}|Du_n|^2\rightarrow A^2.
\end{equation}
Then $I^\prime_{V,\lambda}(u_n)\rightarrow0$ implies that
$$\ds\int_{\R^3}(aDuD\varphi+V(x)u\varphi)+bA^2\ds\int_{\R^3}DuD\varphi-\lambda\ds\int_{\R^3}|u|^{p-1}u\varphi=0,~~\forall~\varphi\in H^1(\R^3),$$
i.e. $J_{V,\lambda}^\prime(u)=0$, where
$$J_{V,\lambda}(u)=\frac{a+bA^2}{2}\ds\int_{\R^3}|Du|^2+\frac{1}{2}\ds\int_{\R^3}V(x)|u|^2-\frac{\lambda}{p+1}\ds\int_{\R^3}|u|^{p+1}.$$

Since
$$\begin{array}{ll}
J_{V,\lambda}(u_n)&=\frac{a+bA^2}{2}\ds\int_{\R^3}|Du_n|^2+\frac{1}{2}\ds\int_{\R^3}V(x)|u_n|^2-\frac{\lambda}{p+1}\ds\int_{\R^3}|u_n|^{p+1}\\[5mm]
&=\frac{a}{2}\ds\int_{\R^3}|Du_n|^2+\frac{1}{2}\ds\int_{\R^3}V(x)|u_n|^2+\frac{b}{4}\left(\ds\int_{\R^3}|Du_n|^2\right)^2-\frac{\lambda}{p+1}\ds\int_{\R^3}|u_n|^{p+1}\\[5mm]
&~~~~~+\frac{bA^2}{4}\ds\int_{\R^3}|Du_n|^2+o(1)\\[5mm]
&=I_{V,\lambda}(u_n)+\frac{bA^4}{4}+o(1)
\end{array}
$$
and$$\begin{array}{ll}
\langle J^\prime_{V,\lambda}(u_n),\varphi\rangle&=(a+bA^2)\ds\int_{\R^3}Du_nD\varphi+\ds\int_{\R^3}V(x)u\varphi-\lambda\ds\int_{\R^3}|u|^{p-1}u\varphi\\[5mm]
&=a\ds\int_{\R^3}Du_nD\varphi+\ds\int_{\R^3}V(x)u\varphi+b\ds\int_{\R^3}|Du_n|^2\ds\int_{\R^3}Du_nD\varphi-\lambda\ds\int_{\R^3}|u|^{p-1}u\varphi+o(1)\\[5mm]
&=\langle I^\prime_{V,\lambda}(u_n),\varphi\rangle+o(1),
\end{array}
$$
we conclude that
$$J_{V,\lambda}(u_n)\rightarrow c+\frac{bA^4}{4},~~~~~~~~~J^\prime_{V,\lambda}(u_n)\rightarrow0~~\hbox{in}~H^{-1}(\R^3).$$
We next show that either (i) or (ii) holds. The argument is similar to \cite{fmm}, for reader's convenience, we give a detailed proof.\\

\noindent Step 1: Set $u_n^1=u_n-u$, by \eqref{4.6}, Lemma 2.12 and $(V_2)$ we see that

$(a.1)$~~$|Du^1_n|_2^2=|Du_n|_2^2-|Du|_2^2+o(1)$,

$(b.1)$~~$|u^1_n|_2^2=|u_n|_2^2-|u|_2^2+o(1)$,

$(c.1)$~~$J^\infty_{\lambda}(u_n^1)\rightarrow c+\frac{bA^4}{4}-J_{V,\lambda}(u),$

$(d.1)$~~$(J^\infty_{\lambda})^\prime(u_n^1)\rightarrow0$ in $H^{-1}(\R^3)$.

Let $$\sigma^1=\limsup\limits_{n\rightarrow\infty}\sup\limits_{y\in \R^3}\ds\int_{B_1(y)}|u_n^1|^2.$$
\noindent \textbf{Vanishing}:~~ If $\sigma^1=0,$ then it follows from Lemma 2.10 that $u_n^1\rightarrow0$ in $L^s(\R^3)$ for $\forall~s\in(2,2^*)$. Since $(J^\infty_{\lambda})^\prime(u_n^1)\rightarrow0$, we see that $u_n^1\rightarrow 0$ in $H^1(\R^3)$ and the proof is completed.

\noindent \textbf{Non-vanishing}:~~ If $\sigma^1>0$, then there exists a sequence $\{y_n^1\}\subset\R^3$ such that
 $$\ds\int_{B_1(y_n^1)}|u_n^1|^2>\frac{\sigma^1}{2}.$$
 Set $w_n^1\triangleq u_n^1(\cdot+y_n^1)$. Then $\{w_n^1\}$ is bounded in $H^1(\R^3)$ and we may assume that $w_n^1\rightharpoonup w^1$ in $H^1(\R^3)$. Hence $(J^\infty_{\lambda})^\prime(w^1)=0$. Since $$\ds\int_{B_1(0)}|w_n^1|^2>\frac{\sigma^1}{2},$$
 we see that $w^1\neq0$. Moreover, $u_n^1\rightharpoonup 0$ in $H^1(\R^3)$ implies that $\{y_n^1\}$ is unbounded. Hence, we may assume that $|y_n^1|\rightarrow\infty$.\\

\noindent Step 2: Set $u_n^2=u_n-u-w^1(\cdot-y_n^1)$. We can similarly check that

 $(a.2)$~~$|Du^2_n|_2^2=|Du_n|_2^2-|Du|_2^2-|Dw^1|_2^2+o(1)$,

$(b.2)$~~$|u^2_n|_2^2=|u_n|_2^2-|u|_2^2-|w^1|_2^2+o(1)$,

$(c.2)$~~$J^\infty_{\lambda}(u_n^2)\rightarrow c+\frac{bA^4}{4}-J_{V,\lambda}(u)-J^\infty_{\lambda}(w^1),$

$(d.2)$~~$(J^\infty_{\lambda})^\prime(u_n^2)\rightarrow0$ in $H^{-1}(\R^3)$.

Similar to the arguments in Step 1, let
$$\sigma^2=\limsup\limits_{n\rightarrow\infty}\sup\limits_{y\in \R^3}\ds\int_{B_1(y)}|u_n^2|^2.$$
If vanishing occurs, then $\|u_n^2\|\rightarrow0$, i.e. $\|u_n-u-w^1(\cdot-y_n^1)\|\rightarrow0$. Moreover, by \eqref{4.7} and $(a.2)$ $(c.2)$, we see that $$A^2=|Du|_2^2+|Dw^1|^2_2~~\hbox{and}~~c+\frac{bA^4}{4}=J_{V,\lambda}(u)+J^\infty_{\lambda}(w^1).$$

If non-vanishing occurs, then there exists a sequence $\{y_n^2\}\subset\R^3$ and a nontrivial $w^2\in H^1(\R^3)$ such that $w_n^2\triangleq u_n^2(\cdot+y_n^2)\rightharpoonup w^2$ in $H^1(\R^3)$. Then by $(d.2)$, we have that $(J^\infty_{\lambda})^\prime(w^2)=0$. Furthermore, $u_n^2\rightharpoonup0$ in $H^1(\R^3)$ implies that $|y_n^2|\rightarrow\infty$ and $|y_n^2-y_n^1|\rightarrow\infty$.

We next proceed by iteration. Recall that if $w^k$ is a nontrivial solution of $I^\infty_{\lambda}$, then $I^\infty_{\lambda}(w^k)>0$. So there exists some finite $l\in\mathbb{N}$ such that only the vanishing case occurs in Step $l$. Then the lemma is proved.
\end{proof}

\begin{lem}\label{lem4.5}\ \
Assume that $(V_1)-(V_3)$ hold and $2<p<5$. For
$\lambda\in[\delta,1]$, let $\{u_n\}\subset H^1(\R^3)$ be a bounded
$(PS)_{c_\lambda}$ sequence of $I_{V,\lambda}$, then there exists a nontrivial $u_\lambda\in H^1(\R^3)$ such that $$u_n\rightarrow
u_\lambda~~~\hbox{in}~H^1(\R^3).$$
\end{lem}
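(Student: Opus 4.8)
The plan is to run the global compactness Lemma~\ref{lem4.4} on the bounded $(PS)_{c_\lambda}$ sequence $\{u_n\}$ and to exclude its second alternative. Lemma~\ref{lem4.4} produces $u\in H^1(\R^3)$ and $A\ge0$ with $u_n\rightharpoonup u$, $|Du_n|_2^2\to A^2$, $J_{V,\lambda}'(u)=0$, and either (i) $u_n\to u$ in $H^1(\R^3)$, or (ii) there are $l\ge1$ nontrivial solutions $w^1,\dots,w^l$ of \eqref{4.5} with $c_\lambda+\frac{b}{4}A^4=J_{V,\lambda}(u)+\sum_{k=1}^{l}J^\infty_\lambda(w^k)$ and $A^2=|Du|_2^2+\sum_{k=1}^{l}|Dw^k|_2^2$. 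In case (i) set $u_\lambda:=u$; then $I_{V,\lambda}(u_\lambda)=\lim_n I_{V,\lambda}(u_n)=c_\lambda>0$ by Lemma~\ref{lem4.1}(ii), so $u_\lambda\ne0$ and we are done. Hence the whole work is to show that (ii) contradicts $c_\lambda<m_\lambda^\infty$ (Lemma~\ref{lem4.3}). Write $D_0=|Du|_2^2$, $D_k=|Dw^k|_2^2$, $B^2=\sum_{k=1}^l D_k$, so $A^2=D_0+B^2$.

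First I would handle the ``bubbles''. Each $w^k$ solves the constant-coefficient equation \eqref{4.5}, so $w^k(x)=v^k\big((a+bA^2)^{-1/2}x\big)$ for a nontrivial solution $v^k$ of the limiting equation $-\Delta v+V_\infty v=\lambda|v|^{p-1}v$; the Nehari and Poho\u{z}aev identities for $v^k$ in $\R^3$ give $\frac{1}{2}\int_{\R^3}(|Dv^k|^2+V_\infty|v^k|^2)-\frac{\lambda}{p+1}\int_{\R^3}|v^k|^{p+1}=\frac{1}{3}|Dv^k|_2^2$ and $|Dv^k|_2^2\ge S:=|Dv^*|_2^2$, where $v^*$ is a ground state of that equation. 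Undoing the scaling yields $J^\infty_\lambda(w^k)=\frac{1}{3}(a+bA^2)D_k$, hence $\sum_k J^\infty_\lambda(w^k)=\frac{1}{3}(a+bA^2)B^2$, and $D_k\ge(a+bA^2)^{1/2}S$, so, using $A^2\ge B^2$ and $l\ge1$, $B^2\ge(a+bB^2)^{1/2}S$, i.e. $B^4\ge(a+bB^2)S^2$. On the other hand the ground state $u_\lambda^\infty$ of \eqref{4.2} provided by Theorem~\ref{thm1.4} solves $-(a+bD^*)\Delta u+V_\infty u=\lambda|u|^{p-1}u$ with $D^*:=|Du_\lambda^\infty|_2^2$; the Nehari and Poho\u{z}aev identities for that equation give $m_\lambda^\infty=I^\infty_\lambda(u_\lambda^\infty)=\frac{a}{3}D^*+\frac{b}{12}(D^*)^2$, and a scaling argument (inflating $v^*$ and using that $I^\infty_\lambda$ at any solution of \eqref{4.2} equals $\frac{a}{3}D+\frac{b}{12}D^2$ with $D$ its Dirichlet norm) identifies $D^*$ as the positive root of $s^2=(a+bs)S^2$. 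Comparing this with $B^4\ge(a+bB^2)S^2$ forces $B^2\ge D^*$, whence $\frac{a}{3}B^2+\frac{b}{12}B^4\ge m_\lambda^\infty$.

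Next I would handle the weak limit $u$, which solves $-(a+bA^2)\Delta u+V(x)u=\lambda|u|^{p-1}u$. Testing with $u$ gives $\lambda\int_{\R^3}|u|^{p+1}=\|u\|^2+bA^2D_0$ and $J_{V,\lambda}(u)=\frac{p-1}{2(p+1)}\big(\|u\|^2+bA^2D_0\big)$, while the Poho\u{z}aev identity (cf. Lemma~\ref{lem2.1}) together with $(V_1)$ gives $\int_{\R^3}V(x)u^2\ge\frac{5-p}{3(p+1)}\lambda\int_{\R^3}|u|^{p+1}$; combining these with $A^2\ge D_0$ yields
$$\|u\|^2\ \ge\ \frac{b(5-p)}{2(2p-1)}\,D_0^2+\frac{3a(p+1)}{2(2p-1)}\,D_0 .$$
Plugging $J_{V,\lambda}(u)$ and $\sum_k J^\infty_\lambda(w^k)=\frac{1}{3}(a+bA^2)B^2$ into $c_\lambda+\frac{b}{4}A^4=J_{V,\lambda}(u)+\sum_k J^\infty_\lambda(w^k)$ and expanding via $A^2=D_0+B^2$, one gets the clean identity
$$c_\lambda=\frac{p-1}{2(p+1)}\|u\|^2+\frac{b(p-3)}{4(p+1)}D_0^2+\frac{b(p-2)}{3(p+1)}D_0B^2+\frac{a}{3}B^2+\frac{b}{12}B^4 .$$
Feeding in the lower bound for $\|u\|^2$ turns the coefficient of $D_0^2$ into $\frac{b(p-2)}{4(2p-1)}\ge0$, and since $p>2$ the cross term $\frac{b(p-2)}{3(p+1)}D_0B^2$ is $\ge0$ as well; hence $c_\lambda\ge\frac{a}{3}B^2+\frac{b}{12}B^4\ge m_\lambda^\infty$, contradicting Lemma~\ref{lem4.3}. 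So (ii) cannot occur, $u_n\to u_\lambda:=u$ in $H^1(\R^3)$, and $u_\lambda\ne0$ as noted.

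The step I expect to be the main obstacle is the bookkeeping for the nonlocal term. Because each $w^k$ only solves the limit equation with the frozen coefficient $a+bA^2$, relating $\sum_k J^\infty_\lambda(w^k)$ to the least energy $m_\lambda^\infty$ of the genuinely nonlocal problem \eqref{4.2} requires the scaling analysis above and uses Theorem~\ref{thm1.4} in an essential way; this is the ``more careful analysis'' announced in the introduction. Moreover, the $\frac{b}{4}A^4$ on the left of the energy identity contributes, after expansion, the term $\frac{b(p-3)}{4(p+1)}D_0^2$, which is negative when $2<p<3$; absorbing it is exactly what the sharp Poho\u{z}aev lower bound on $\|u\|^2$ achieves, and this is the only place where hypothesis $(V_1)$ is used.
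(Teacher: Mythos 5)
Your proof is correct, and it reaches the same contradiction $c_\lambda\ge m_\lambda^\infty$ with Lemma \ref{lem4.3} as the paper does, but by a genuinely different route for the crucial comparison of the bubble energies with $m_\lambda^\infty$. The paper rescales each bubble as $t_kw^k(t_k^{-1}x)$ with $t_k\in(0,1]$ so that it lands on the limit manifold $\mathcal{M}^\infty_\lambda$ (possible because $\langle (J^\infty_{\lambda})^\prime(w^k),w^k\rangle+\widetilde{P_\lambda}(w^k)=0$ forces $G^\infty_\lambda(w^k)\le0$), and monotonicity in $t_k$ then yields $J^\infty_\lambda(w^k)\ge m^\infty_\lambda+\frac{b}{4}A^2|Dw^k|_2^2$; paired with $J_{V,\lambda}(u)\ge\frac{b}{4}A^2|Du|_2^2$ (the same Nehari--Poho\u{z}aev--$(V_1)$ computation you perform for $u$, in cruder form), the $\frac{b}{4}A^4$ terms cancel and $c_\lambda\ge lm^\infty_\lambda$. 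You instead exploit that \eqref{4.5} is autonomous with the frozen coefficient $a+bA^2$: descaling reduces each $w^k$ to a solution of $-\Delta v+V_\infty v=\lambda|v|^{p-1}v$, giving the exact value $J^\infty_\lambda(w^k)=\frac13(a+bA^2)|Dw^k|_2^2$, while $m^\infty_\lambda=\frac{a}{3}D^*+\frac{b}{12}(D^*)^2$ with $D^*$ the positive root of $s^2=(a+bs)S^2$; the quadratic comparison $B^4\ge(a+bB^2)S^2\Rightarrow B^2\ge D^*$ then closes the argument after your exact expansion of the energy identity (I checked the coefficients $\frac{b(p-3)}{4(p+1)}$, $\frac{b(p-2)}{3(p+1)}$, $\frac{b}{12}$ and the absorption of the $D_0^2$ term into $\frac{b(p-2)}{4(2p-1)}\ge0$; all are right). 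Your route buys explicit formulas and makes visible exactly where $(V_1)$ and the range $2<p<3$ bite; the paper's manifold-projection argument is shorter and avoids identifying $m^\infty_\lambda$ with a local ground-state level. One presentational point: the existence of the local ground state $v^*$, hence of $S$, should be justified --- it follows either from classical Berestycki--Lions theory or, with no extra input, by descaling the minimizer furnished by Theorem \ref{thm1.4} and noting that the action of any local solution equals $\frac13|Dv|_2^2$.
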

\begin{proof}~~
By Lemma 3.4, for $\lambda\in[\delta,1],$ there exists a
$u_\lambda\in H^1(\R^3)$ and $A_\lambda\in\R$ such that
$$u_n\rightharpoonup u_\lambda~~~\hbox{in}~H^1(\R^3),$$
$$\ds\int_{\R^3}|Du_n|^2\rightarrow A_\lambda^2$$
and $J^\prime_{V,\lambda}(u_\lambda)=0$ and either (i) or (ii)
occurs, where $J_{V,\lambda}$ is given in \eqref{4.13}.

If (ii) occurs, i.e. there exists an $l\in \mathbb{N}$ and $\{y_n^k\}\subset\R^3$
with $|y_n^k|\rightarrow\infty$ as $n\rightarrow\infty$ for each $1\leq k\leq l$, nontrivial
solutions $w^1,\cdots,w^l$ of problem \eqref{4.5}
such that
$$c+\frac{bA_\lambda^4}{4}=
J_{V,\lambda}(u_\lambda)+\sum_{k=1}^{l}J_\lambda^\infty(w^k)$$ and
$$\left\|u_n-u_\lambda-\sum_{k=1}^{l}w^k(\cdot-y_n^k)\right\|\rightarrow0,$$
\begin{equation}\label{4.8}
A_\lambda^2=|Du_\lambda|_2^2+\sum_{k=1}^{l}|Dw^k|_2^2,
\end{equation}
where $J^\infty_\lambda$ is given in \eqref{4.14}.

Denote
$$
\left\{%
\begin{array}{ll}\alpha\triangleq a\ds\int_{\R^3}|Du_\lambda|^2,~~~~
\beta\triangleq
\ds\int_{\R^3}V(x)|u_\lambda|^2,~~~~\bar{\beta}\triangleq
\ds\int_{\R^3}(DV(x),x)|u_\lambda|^2,\\
\mu\triangleq bA^2_\lambda\ds\int_{\R^3}|Du_\lambda|^2,
~~~~~~\theta\triangleq\ds\int_{\R^3}|u_\lambda|^{p+1}.
\end{array}%
\right.
$$
 Then $\alpha,$ $\mu,$
$\theta$ must be nonnegative and by $(V_1)$, $\beta-\bar{\beta}\geq0$.
By the Poho\u{z}aev identity and
$J^\prime_{V,\lambda}(u_\lambda)=0$, we have that
$$
\left\{%
\begin{array}{ll}
\frac{1}{2}\alpha+\frac{3}{2}\beta+\frac{1}{2}\bar{\beta}+\frac{1}{2}\mu-\frac{3\lambda}{p+1}\theta=0, \\
\frac{1}{2}\alpha+\frac{1}{2}\beta+\frac{1}{4}\mu-\frac{\lambda}{p+1}\theta=J_{V,\lambda}(u_\lambda)-\frac{1}{4}\mu, \\
\alpha+\beta+\mu-\lambda\theta=0.\\
\end{array}%
\right.
$$
Then we conclude that
$$6\left(J_{V,\lambda}(u_\lambda)-\frac{1}{4}\mu\right)=\frac{3}{2}\alpha+\frac{1}{2}(\beta-\bar{\beta})+\frac{p-2}{p+1}\lambda\theta\geq\frac{p-2}{p+1}\lambda\theta\geq0.$$
Hence
\begin{equation}\label{4.9}
J_{V,\lambda}(u_\lambda)\geq\frac{1}{4}bA^2_\lambda\ds\int_{\R^3}|Du_\lambda|^2.
\end{equation}
For each nontrivial solution $w^k$ $(k=1,\cdots,l)$ of problem \eqref{4.5}, i.e. $(J^\infty_{\lambda})^\prime(w^k)=0$. Recall that $w^k$ satisfies the Poho\u{z}ave identity
$$\widetilde{P_\lambda}(w^k)\triangleq\frac{a+bA_\lambda^2}{2}\ds\int_{\R^3}|Dw^k|^2+\frac{3}{2}\ds\int_{\R^3}V_\infty|w^k|^2-\frac{3\lambda}{p+1}\ds\int_{\R^3}|w^k|^{p+1}=0.$$
Then by \eqref{4.8}, we have that
$$\begin{array}{ll}
0&=\langle (J^\infty_{\lambda})^\prime(w^k),w^k\rangle+\widetilde{P_\lambda}(w^k)\\[5mm]
&=\frac{3(a+bA_\lambda^2)}{2}\ds\int_{\R^3}|Dw^k|^2+\frac{5}{2}\ds\int_{\R^3}V_\infty|w^k|^2-\frac{(p+4)\lambda}{p+1}\ds\int_{\R^3}|w^k|^{p+1}\\[5mm]
&\geq G^\infty_\lambda(w^k).
\end{array}
$$
Hence there exists $t_k\in(0,1]$ such that $t_kw^k(t_k^{-1}x)\in \mathcal{M}_\lambda^\infty$. So by \eqref{4.8}, we see that
\begin{equation}\label{4.10}\begin{array}{ll}
J_\lambda^\infty(w^k)&=\left[J_\lambda^\infty(w^k)-\frac{\langle (J^\infty_{\lambda})^\prime(w^k),w^k\rangle+\widetilde{P}(w^k)}{p+4}-\frac{bA_\lambda^2}{4}\ds\int_{\R^3}|Dw^k|^2\right]+\frac{bA_\lambda^2}{4}\ds\int_{\R^3}|Dw^k|^2\\[5mm]
&=\frac{(p+1)a}{2(p+4)}\ds\int_{\R^3}|Dw^k|^2+\frac{p-1}{2(p+4)}\ds\int_{\R^3}V_\infty|w^k|^2+\frac{b(p-2)}{4(p+4)}A_\lambda^2\ds\int_{\R^3}|Dw^k|^2+\frac{bA_\lambda^2}{4}\ds\int_{\R^3}|Dw^k|^2\\[5mm]
&\geq\frac{(p+1)a}{2(p+4)}\ds\int_{\R^3}|Dw^k|^2+\frac{p-1}{2(p+4)}\ds\int_{\R^3}V_\infty|w^k|^2+\frac{b(p-2)}{4(p+4)}\left(\ds\int_{\R^3}|Dw^k|^2\right)^2+\frac{bA_\lambda^2}{4}\ds\int_{\R^3}|Dw^k|^2\\[5mm]
&\geq\frac{(p+1)a}{2(p+4)} t_k^3\ds\int_{\R^3}|Dw^k|^2+\frac{p-1}{2(p+4)}t_k^5\ds\int_{\R^3}V_\infty|w^k|^2+\frac{b(p-2)}{4(p+4)}t_k^3\left(\ds\int_{\R^3}|Dw^k|^2\right)^2\\[5mm]
&~~~~~~~~+\frac{bA_\lambda^2}{4}\ds\int_{\R^3}|Dw^k|^2\\[5mm]
&=I^\infty_{\lambda}(t_kw^k(t_k^{-1}x))-\frac{1}{p+4} G^\infty_\lambda(t_kw^k(t_k^{-1}x))+\frac{bA_\lambda^2}{4}\ds\int_{\R^3}|Dw^k|^2\\[5mm]
&=I^\infty_{\lambda}(t_kw^k(t_k^{-1}x))+\frac{bA_\lambda^2}{4}\ds\int_{\R^3}|Dw^k|^2\\[5mm]
&\geq m_\lambda^\infty+\frac{bA_\lambda^2}{4}\ds\int_{\R^3}|Dw^k|^2.
\end{array}
\end{equation}

Then by \eqref{4.8}-\eqref{4.10}, we have that
$$
\begin{array}{ll}
c_\lambda+\frac{bA_\lambda^4}{4}&=J_{V,\lambda}(u_\lambda)+\sum_{k=1}^{l}J_\lambda^\infty(w^k)\\[5mm]
&\geq lm_\lambda^\infty+\frac{bA_\lambda^2}{4}\ds\int_{\R^3}|Du_\lambda|^2+\frac{bA_\lambda^2}{4}\sum_{k=1}^{l}\ds\int_{\R^3}|Dw^k|^2\\[5mm]
&\geq m_\lambda^\infty+\frac{bA_\lambda^4}{4}.
\end{array}
$$
i.e. $c_\lambda\geq m_\lambda^\infty$, which contradicts to Lemma 3.3. So (i) holds,
i.e. $u_n\rightarrow u_\lambda$ in $H^1(\R^3)$ and then $u_\lambda$
is a nontrivial critical point for $I_{V,\lambda}$ and
$I_{V,\lambda}(u_\lambda)=c_\lambda.$
\end{proof}

\noindent $\textbf{Proof of Theorem 1.1}$\,\,\

\noindent\begin{proof} \,\,\,\,\ We complete the proof in two steps.

\noindent\textbf{Step 1}:~~~By Lemma 3.1, Lemma 3.5 and Proposition
1.3, for a.e. $\lambda\in [\delta,1],$ there exists a nontrivial
critical point $u_\lambda\in H^1(\R^3)$ for $I_{V,\lambda}$ and
$I_{V,\lambda}(u_\lambda)=c_\lambda.$

Choosing a sequence $\{\lambda_n\}\subset[\delta,1]$ satisfying
$\lambda_n\rightarrow1$, then we have a sequence of nontrivial
critical points $\{u_{\lambda_n}\}$ of $I_{V,\lambda_n}$ and
$I_{V,\lambda_n}(u_{\lambda_n})=c_{\lambda_n}$. We next show that
$\{u_{\lambda_n}\}$ is bounded in $H^1(\R^3)$.

Denote $$
\left\{%
\begin{array}{ll}\alpha_n\triangleq a\ds\int_{\R^3}|Du_{\lambda_n}|^2,~~~~
\beta_n\triangleq
\ds\int_{\R^3}V(x)|u_{\lambda_n}|^2,~~~~\bar{\beta}_n\triangleq
\ds\int_{\R^3}(DV(x),x)|u_{\lambda_n}|^2,\\
\mu_n\triangleq b\left(\ds\int_{\R^3}|Du_{\lambda_n}|^2\right)^2,
~~~~~~\theta_n\triangleq\ds\int_{\R^3}|u_{\lambda_n}|^{p+1}.
\end{array}%
\right.
$$Then
$$\left\{%
\begin{array}{ll}
\frac{1}{2}\alpha_n+\frac{3}{2}\beta_n+\frac{1}{2}\bar{\beta}_n+\frac{1}{2}\mu_n-\frac{3\lambda_n}{p+1}\theta_n=0, \\
\frac{1}{2}\alpha_n+\frac{1}{2}\beta_n+\frac{1}{4}\mu_n-\frac{\lambda_n}{p+1}\theta_n=c_{\lambda_n}, \\
\alpha_n+\beta_n+\mu_n-\lambda_n\theta_n=0.\\
\end{array}%
\right.
$$
Hence
\begin{equation}\label{4.11}
\frac{3}{2}\alpha_n+\frac{1}{2}(\beta_n-\bar{\beta}_n)+\frac{p-2}{p+1}\lambda_n\theta_n=6c_{\lambda_n}\leq
6c_{\delta}\end{equation}
 and
\begin{equation}\label{4.12}
\frac{1}{4}(\alpha_n+\beta_n)+\left(\frac{1}{4}-\frac{1}{p+1}\right)\lambda_n\theta_n=c_{\lambda_n}.
\end{equation}
Since $(V_1)$ implies that $\beta_n-\bar{\beta}_n\geq0$ and $\alpha_n$, $\mu_n$, $\theta_n$ are nonnegative, we conclude
that $\theta_n$ is bounded from \eqref{4.11}, hence by \eqref{4.12},
we have that $\alpha_n+\beta_n$ is bounded, i.e. $\{u_{\lambda_n}\}$
is bounded in $H^1(\R^3)$. Therefore by Lemma 3.2, we see that
$$
\lim\limits_{n\rightarrow\infty}I_{V,1}(u_{\lambda_n})
=\lim\limits_{n\rightarrow\infty}\left(I_{V,\lambda_n}(u_{\lambda_n})+(\lambda_n-1)\ds\int_{\R^3}|u_{\lambda_n}|^{p+1}\right)
=\lim\limits_{n\rightarrow\infty}c_{\lambda_n}=c_1
$$
and
$$
\lim\limits_{n\rightarrow\infty}\langle
I^\prime_{V,1}(u_{\lambda_n}),\varphi\rangle
=\lim\limits_{n\rightarrow\infty}\left(\langle
I^\prime_{V,\lambda_n}(u_{\lambda_n}),\varphi\rangle+(\lambda_n-1)\ds\int_{\R^3}|u_{\lambda_n}|^{p-1}u_{\lambda_n}\varphi\right)=0,
$$
i.e. $\{u_{\lambda_n}\}$ is a bounded $(PS)_{c_1}$ sequence for $I_V=I_{V,1}$. Then by Lemma 3.5, there exists a nontrivial critical point
$u_0\in H^1(\R^3)$ for $I_V$ and $I_V(u_0)=c_1$.

\noindent\textbf{Step 2}:~~~Next we prove the existence of a ground
state solution for problem \eqref{1.1}. Set
$$m=\inf\{I_V(u)|~u\neq0,~I_V^\prime(u)=0\}.$$
Then by $(V_1)$, we see that $0< m\leq I_V(u_0)=c_1<+\infty$. Let
$\{u_n\}$ be a sequence of nontrivial critical points of $I_V$
satisfying $I_V(u_n)\rightarrow m$, using the same arguments as in
\textbf{Step 1}, we can deduce that $\{u_n\}$ is bounded in
$H^1(\R^3)$, i.e. $\{u_n\}$ is a bounded $(PS)_m$ sequence of $I_V$.
Similar to the arguments in Lemma 3.5, there exists a nontrivial $u\in H^1(\R^3)$ such that
$I_V(u)=m$ and $I^\prime_V(u)=0$. By the standard regularity
arguments as in the proof of Theorem 1.4, we see that $u$ is a
positive ground state solution for problem \eqref{1.1}. Then the
proof is complete.
\end{proof}

\section{Proof of Theorem 1.6}
\noindent $\textbf{Proof of Theorem 1.6}$\,\,\

\noindent \begin{proof} \,\,\,\,\ Suppose that $u\in H^1(\R^3)$ is a
nontrivial solution to \eqref{1.11}, multiplying the equation
\eqref{1.11} by $u$ and integrating, we have that
$$a\ds\int_{\R^3}
(|Du|^2+V(x)|u|^2)+b\lambda\left(\ds\int_{\R^3}(|Du|^2+V(x)|u|^2)\right)^2-\ds\int_{\R^3}|u|^{p+1}=0.
$$
Since $a>1$, for $t\geq0$, set
$$g(t)\triangleq t^4b\lambda
\left(\ds\int_{\R^3}(|Du|^2+V(x)|u|^2)\right)^2+t^2(a-1)\ds\int_{\R^3}
(|Du|^2+V(x)|u|^2)-t^3\ds\int_{\R^3}|u|^3.$$ Denote $C>0$ be the
best Sobolev constant for the embedding from $H^1(\R^3)$ into
$L^3(\R^3)$, i.e.
$C=\inf\limits_{H^1(\R^3)\backslash\{0\}}\frac{\int_{\R^3}|Du|^2+V(x)|u|^2}{|u|_3^2}.$
In particular,
\begin{equation}\label{5.1}
\left(\int_{\R^3}|u|^3\right)^{\frac{1}{3}}\leq
C^{-\frac{1}{2}}\|u\|,~~~~~\forall~u\in H^1(\R^3).
\end{equation}
Since for all $t\geq0$,
$$\begin{array}{ll}
g(t)&=t^2\left(t^2b\lambda
\left(\ds\int_{\R^3}(|Du|^2+V(x)|u|^2)\right)^2-t\ds\int_{\R^3}|u|^3+(a-1)\ds\int_{\R^3}
(|Du|^2+V(x)|u|^2)\right)\\[5mm]
&\geq0~~~~\hbox{if}~~~\left(\ds\int_{\R^3}|u|^3\right)^2\leq4(a-1)b\lambda\left(\ds\int_{\R^3}(|Du|^2+V(x)|u|^2)\right)^3,
\end{array}
$$
then by \eqref{5.1}, there exists $\lambda_0=\frac{1}{4b(a-1)C^3}>0$
such that  for all $\lambda\geq\lambda_0$ and $t\geq0$, $g(t)\geq0$,
then $g(1)\geq0$, i.e.
$$b\lambda
\left(\ds\int_{\R^3}(|Du|^2+V(x)|u|^2)\right)^2\geq
\ds\int_{\R^3}|u|^3-(a-1)\ds\int_{\R^3}(|Du|^2+V(x)|u|^2).$$
 Hence
$$
\begin{array}{ll}
0&=a\ds\int_{\R^3}
(|Du|^2+V(x)|u|^2)+b\lambda\left(\ds\int_{\R^3}(|Du|^2+V(x)|u|^2\right)^2-\ds\int_{\R^3}|u|^{p+1}\\[5mm]
&\geq a\ds\int_{\R^3}
(|Du|^2+V(x)|u|^2)-(a-1)\ds\int_{\R^3}(|Du|^2+V(x)u^2)+\ds\int_{\R^3}|u|^3-\ds\int_{\R^3}|u|^{p+1}\\[5mm]
&\geq \ds\int_{\R^3} |u|^2+|u|^3-|u|^{p+1}.
\end{array}
$$
Since $1<p\leq2$, then the function $h(t)\triangleq t^2+t^3-t^{p+1}$
is nonnegative for all $t\geq0$ and vanishes only if $t=0$. Hence
$u\equiv0$. The proof is completed.
\end{proof}



 \end{document}